\newtheorem{theorem}{Theorem}[section]
\newtheorem{corollary}[theorem]{Corollary}
\newtheorem{lemma}[theorem]{Lemma}
\newtheorem{proposition}[theorem]{Proposition}
\newtheorem{remark}[theorem]{Remark}
\begin{document}

\title[A sharp trace Adams' inequality]{A sharp  trace Adams' inequality in $\mathbb{R}^{4}$ and Existence of the extremals}
\author{Lu Chen, Guozhen Lu and Maochun Zhu}
\address{Key laboratory of Algebraic Lie Theory and Analysis of Ministry of Education,
School of Mathematics and Statistics, Beijing Institute of Technology, Beijing 100081, P. R. China}
\email{chenlu5818804@163.com}
\address{Department of Mathematics\\
University of Connecticut\\
Storrs, CT 06269, USA}
\email{guozhen.lu@uconn.edu}
\address{School of mathematical sciences\\
Jiangsu University\\
Zhenjiang, 212013, P. R. China\\}
\email{zhumaochun2006@126.com}

\thanks{The first author was partly supported by the National Key Research and Development Program (No.
2022YFA1006900) and National Natural Science Foundation of China (No. 12271027). The second author was supported partly by a collaboration grant and Simons Fellowship from the Simons Foundation. The third author was supported by Natural Science Foundation of China (12071185). }

\maketitle

\begin{abstract}
Let $\Omega\subseteq
\mathbb{R}^{4}$ be a bounded domain  with smooth boundary $\partial\Omega$. In this paper, we establish the following sharp form of the trace Adams' inequality in $W^{2,2}(\Omega)$ with
 zero mean value and zero Neumann boundary condition:
\begin{equation*}
S({\alpha})=\underset{\int_{\Omega}udx=0,\frac{\partial u}{\partial\nu}|_{\partial\Omega}=0,\Vert\Delta u\Vert_{2}\leq{1}}{\underset
{u\in{W^{2,2}(\Omega)\setminus\{0\}}}{\sup}}\int_{\partial \Omega} e^{\alpha u^{2}}d\sigma<\infty
\end{equation*}
holds if and only if $ \alpha\leq12\pi^2$.

Moreover, we prove a classification theorem for the
solutions of a class of nonlinear boundary value problem of bi-harmonic equations on the half space $\mathbb{R}^4_{+}$. With this classification result, we can show that $S({12\pi^2})$ is attained by using the blow-up analysis and capacitary estimate. As an application, we prove a sharp trace Adams-Onofri type inequality  in general four dimensional bounded domains with smooth boundary.

\end{abstract}

\maketitle {\small {\bf Keywords:} Adams Trace inequality; Blow-up analysis; Extremal function; Capacity estimate.
 \\

{\bf 2010 MSC.} 35J60
35B33 46E30 }

\section{Introduction}

Let $\Omega\subseteq%
\mathbb{R}
^{n}$ be a bounded domain  with smooth boundary $\partial\Omega$ and $W^{m,p}\left(  \Omega\right) $ denote the usual Sobolev space:
the completion of $C^{\infty}\left(  \bar{\Omega}\right)  $ under the norm %
$$\parallel\cdot\parallel=\big(\sum_{|\alpha|\leq m}\int_{\Omega}|D^{\alpha}u|^pdx\big)^{\frac{1}{p}}.$$ If $1<p<n/m$, the classical Sobolev embedding asserts that $W^{m,p}\left(
\Omega\right)  \hookrightarrow L^{p^{\ast}}\left(  \Omega\right)  $ for
$p^{\ast}=\frac{np}{n-mp}$, and $W^{m,p}\left(
\Omega\right)  \hookrightarrow L^{p^{\star}}\left(  \partial\Omega\right)  $ for
$p^{\star}=\frac{(n-1)p}{n-mp}$. However, when $p=n/m$, it is known that both
$W^{m,p}\left( \Omega\right)  \hookrightarrow L^{\infty}\left(
\Omega\right)  $ and $W^{m,p}\left(  \Omega\right)  \hookrightarrow L^{\infty}\left(
\partial\Omega\right)  $  fail.
\medskip

It it known that the analogue of optimal Sobolev embedding for $W_{0}^{m,n/m}\left(  \Omega\right)$ (the Sobolev space consisting of functions  vanishing on the boundary $\partial\Omega$) is given by
the famous Trudinger-Moser inequality $\left(  m=1\right)$ (\cite{Mo},
\cite{Tru}) and Adams' inequality $\left(  m>1\right)$ (\cite{A}), which can be stated in the following form:

\begin{equation}
\underset{\Vert\Delta^{\frac{m}{2}}u\Vert_{\frac{n}{m}}\leq{1}}{\underset
{u\in{W_{0}^{m,\frac{n}{m}}(\Omega)\setminus\{0\}}}{\sup}}\int_{\Omega}\exp(\alpha
|u(x)|^{\frac{n}{n-m}})dx\left\{
\begin{array}
[c]{l}%
\leq c\left\vert \Omega\right\vert, \text{ if }\alpha\leq{\alpha(n,m),}\\
=+\infty,\text{ \ if }\alpha>{\alpha(n,m),}%
\end{array}
\right.  \label{adams}%
\end{equation}
where
\[
\alpha(n,m)=%
\begin{cases}
\frac{n}{\omega_{n-1}}[\frac{\pi^{n/2}2^{m}\Gamma(\frac{m+1}{2})}{\Gamma
(\frac{n-m+1}{2})}]^{\frac{n}{n-m}},\text{\text{when}}~m~\text{\text{is}
\text{odd,}}\\
\frac{n}{\omega_{n-1}}[\frac{\pi^{n/2}2^{m}\Gamma(\frac{m}{2})}{\Gamma
(\frac{n-m}{2})}]^{\frac{n}{n-m}},\text{\text{when}}~m~\text{\text{is}
\text{even,}}%
\end{cases}
\]
here $\omega_{n-1}$ denotes the $n-1$ dimensional surface measure of
the unit ball in $%
\mathbb{R}
^{n}$.
So far, the Trudinger-Moser-Adams inequalities (\ref{adams}) have been generalized
in many other directions such as the Trudinger-Moser inequalities on the unbounded domains,
compact Riemannian manifolds, C-R spheres, hyperbolic spaces, Heisenberg
groups, Hardy-Adams type inequalities on hyperbolic spaces, etc, to just name a few we refer the interested readers to \cite{Adachi-Tanaka, CL2,Chenluzhu2, ChenW,Li 1,cohn-lu, do, Fontana, Lam,LaLu4,LamLuTang-NA, Liluzhu, li-lu-zhu, liruf,LLWY, LuTang-ANS, LuYangQ1, LuYangQ2, ManciniSandeepTintarev, RS, Yang1}, and the
references therein.
\medskip

An interesting  problem related to the Trudinger-Moser-Adams inequalities
 lies in investigating the existence of extremal functions. Carleson and Chang \cite{c-c} first established the existence of extremals for Trudinger-Moser inequalities on the unit ball through symmetrization rearrangement inequality combining with the ODE technique.
After that, the existence of extremals was proved for any bounded
domains in $\mathbb{R}^n$ (see \cite{Flucher}, \cite{lin}, \cite{Adimurthi}). One can also see \cite{Li2,Li 1, liruf, ZhuJ} for existence of extremals for the Trudinger-Moser inequalities on compact Riemannian manifold, unbounded domains, and see \cite{lu-yang 1,DelaTorre,Chenluzhu} for the existence of extremals for Adams' inequalities in bounded and unbounded domains. We note that the Trudinger-Moser-Adams inequalities on the Sobolev spaces $W^{m,\frac{n}{m}}\left(  \Omega\right)$ without the Dirichlet boundary condition have also been established, the interested readers can refer to the work \cite{c-c1,Leckband,Cianchi1, lu-yang 2, Tarsi}, and the references therein.
\vskip 0.1cm

In this paper, we are interested in the borderline case of Sobolev trace inequality in $W^{m,\frac{n}{m}}(\Omega)$. As mentioned above, from the Sobolev embedding we know that $W^{m,\frac{n}{m}}\left(  \Omega\right)  \hookrightarrow L^{q}\left(
\partial\Omega\right)$ for any $q\in[1,\infty)$, but not for $q=\infty$. (see e.g., Maz'ya's book). Adams \cite{AD1} showed that  $W^{m,\frac{n}{m}}\left( \Omega  \right)$ can be imbedded into the Orlicz space ${L_\phi }(\partial\Omega )$, with $\phi(t)  = {\rm{exp}}\left( {{{\left| t \right|}^{\frac{n}{{n - m}}}} - 1} \right)$(see also \cite{Mazya}). The first optimal trace inequality of Moser type on $\partial\Omega$ was obtained in (\cite{Chang-Marshall}) in two dimensional disk $D$ for functions with zero boundary mean value. Namely,
\begin{equation*}\label{trace}
\underset{\int_{D}\left|\nabla u\right|^{2} dx=1, \int_{\partial D} u d \sigma=0}{\underset
{u \in W^{1,2}(D)\backslash\{0\}}{\sup}}\int_{\partial D} e^{\alpha u^{2}} d\sigma<+\infty, \text{iff } \alpha\leq\pi.
\end{equation*}
Using the technique of blow-up analysis, the authors of \cite{Liliu} extended the result of \cite{Chang-Marshall} to general bounded domains  $\Omega\subset \mathbb{R}^2$ and obtained the existence of corresponding extremals. The author of \cite{Yang} obtained another sharp form of trace Trudinger-Moser  inequality for functions with mean value zero. For more general trace Trudinger-Moser inequality in high dimensional case, one can see the work of  \cite{Cianchi}. For trace Trudinger-Moser and trace Adams inequalities on unbounded domains such as the half spaces, we refer to the work  \cite{CLYZ}.
\vskip0.1cm

The main purpose of this paper is to study the second order trace Adams inequality and the existence of extremals. Set $$\mathcal{H}=\{u\in W^{2,2}(\Omega):\|\Delta u\|_2\leq 1,\int_{\Omega}udx=0,\frac{\partial u}{\partial \nu}\mid_{\partial\Omega}=0\}.$$ Our main results read as follows:
\begin{theorem}\label{mainre}
Let $\Omega\subseteq \mathbb{R}^4$ be a bounded smooth domain with smooth boundary $\partial\Omega$. Then if $\alpha\leq12\pi^2$, we have
\begin{equation}\label{tracethm}
S(\alpha):=\underset{u\in\mathcal{H}\setminus\{0\}}{{\sup}}\int_{\partial \Omega} e^{\alpha u^{2}}d\sigma<\infty.
\end{equation}
The constant $12\pi^2$ is sharp in the sense that if $\alpha >12\pi^2$, then the supremum $S(\alpha)$ is infinity. Moreover, the supremum is attained if $\alpha\leq12\pi^2$.
\end{theorem}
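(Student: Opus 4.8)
The plan is to prove Theorem \ref{mainre} in three stages: the subcritical bound, the sharpness of the exponent, and the existence of an extremal.

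\textbf{Step 1: Boundedness for $\alpha \le 12\pi^2$.} First I would reduce the statement to a known Adams-type trace estimate by using the representation of $u$ in terms of $\Delta u$. Since $u \in W^{2,2}(\Omega)$ with $\int_\Omega u\,dx = 0$ and $\partial u/\partial\nu|_{\partial\Omega} = 0$, elliptic regularity and the Neumann Green's function for $\Delta$ on $\Omega$ give a pointwise representation $u(x) = \int_\Omega G_N(x,y)\,\Delta u(y)\,dy$ (up to the additive constant fixed by the zero-mean condition). Restricting $x$ to $\partial\Omega$ yields a trace operator $f \mapsto \int_\Omega G_N(\cdot,y) f(y)\,dy$ from $L^2(\Omega)$ to functions on $\partial\Omega$ whose kernel has the same leading singularity $c|x-y|^{-2}$ as the Newtonian kernel in $\mathbb{R}^4$, with a smooth (or mildly singular) remainder controlled by the geometry of $\partial\Omega$. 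One then invokes an Adams-Garsia type lemma on the exponential integrability of convolutions with kernels of critical order — precisely the mechanism behind \eqref{adams} — to obtain $\int_{\partial\Omega} e^{\alpha u^2}\,d\sigma \le C$ for $\alpha$ up to the sharp constant determined by the surface-measure normalization in $\mathbb{R}^4$, which is $12\pi^2$. The constant $12\pi^2$ should match $\alpha(4,2)$-type bookkeeping adapted to the \emph{boundary} measure (dimension $3$) rather than the bulk; the normalizing computation is where the precise value $12\pi^2$ emerges. This step is essentially soft once the kernel estimate is in place.

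\textbf{Step 2: Sharpness.} For $\alpha > 12\pi^2$ I would construct an explicit concentrating sequence (Moser-type functions) violating the bound. Pick a boundary point $p \in \partial\Omega$, use boundary normal coordinates flattening $\partial\Omega$ near $p$, and build functions $u_\varepsilon$ whose Laplacian concentrates near $p$ so that $\|\Delta u_\varepsilon\|_2 \le 1$ while $u_\varepsilon$ develops a logarithmic spike of the right height on $\partial\Omega$; after correcting for the zero-mean and zero-Neumann constraints (lower-order modifications that do not affect the leading asymptotics), a direct computation gives $\int_{\partial\Omega} e^{\alpha u_\varepsilon^2}\,d\sigma \to \infty$ whenever $\alpha > 12\pi^2$. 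The model computation here is on the half-space $\mathbb{R}^4_+$, which is exactly why the paper develops the half-space theory.

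\textbf{Step 3: Existence of an extremal at $\alpha = 12\pi^2$.} This is the hard part. Take a maximizing sequence $u_k \in \mathcal{H}$ for $S(12\pi^2)$; by weak compactness $u_k \rightharpoonup u_0$ in $W^{2,2}(\Omega)$. The dichotomy is whether concentration occurs. If not (i.e. if $u_0 \ne 0$ and $\int_{\partial\Omega} e^{12\pi^2 u_k^2}\to \int_{\partial\Omega} e^{12\pi^2 u_0^2}$ via a Lions-type lemma giving higher integrability when $\|\Delta u_0\|_2 < 1$), then $u_0$ is the desired extremal. If concentration does occur at some boundary point, one performs \emph{blow-up analysis}: rescale $u_k$ around the concentration point, pass to a limiting entire solution on $\mathbb{R}^4_+$ of the associated Euler–Lagrange boundary value problem for the bi-Laplacian, and invoke the \emph{classification theorem} for such solutions (stated as forthcoming in the excerpt) to pin down the bubble profile exactly. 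Then a \emph{capacity/capacitary estimate} near the concentration point — comparing the energy carried by the bubble against the energy budget $\|\Delta u_k\|_2 \le 1$ — produces a sharp upper bound for $\limsup_k \int_{\partial\Omega} e^{12\pi^2 u_k^2}\,d\sigma$ of the form $|\partial\Omega| + C_0$ for an explicit $C_0$. Finally one constructs a test function (a truncated/corrected bubble glued to a fixed profile) whose energy strictly exceeds $|\partial\Omega| + C_0$, contradicting that concentration is optimal; hence the maximizing sequence cannot concentrate and the extremal exists. The main obstacles are (i) getting the blow-up analysis to converge in the right topology up to the boundary, where the Neumann condition and the bi-harmonic (fourth-order) nature make the estimates delicate, and (ii) making the capacity estimate sharp enough that the test-function comparison closes — this is the classical Carleson–Chang-type difficulty, here in a fourth-order trace setting.
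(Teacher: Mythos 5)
Your three-step architecture (subcritical bound, sharpness, blow-up plus test-function comparison) matches the paper's overall strategy, but Step 1 takes a genuinely different route, and in fact one with a gap, and Step 3 glosses over several technical obstacles the paper has to overcome.

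For Step 1, the paper does \emph{not} attempt a trace analogue of the Adams--Garsia kernel lemma. Instead it extends a smooth vector field $\vec\nu$ to $\Omega$ equal to the outward normal on $\partial\Omega$, writes
$\int_{\partial\Omega} e^{\alpha u^2}d\sigma=\int_\Omega\operatorname{div}\bigl(\vec\nu\,e^{\alpha u^2}\bigr)dx$,
expands the divergence, and applies H\"older with $\|\nabla u\|_{L^4}$, $\|u\|_{L^p}$, and $\|e^{\alpha u^2}\|_{L^{(16\pi^2-\varepsilon)/(12\pi^2-\varepsilon)}}$; the last factor is then controlled by the known bulk mean-zero Adams constant $16\pi^2$ of Hang. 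This is what pins down $12\pi^2=\tfrac34\cdot 16\pi^2$ cleanly, and it only yields the \emph{subcritical} bound $\alpha<12\pi^2$. Your Green-function representation followed by ``an Adams--Garsia type lemma'' is a plausible alternative, but you never carry out the kernel-rearrangement computation that would actually produce $12\pi^2$, and you appear to claim the bound up to and including $\alpha=12\pi^2$. The critical case is \emph{not} obtained directly in the paper either; it falls out of the existence result at the end of the blow-up argument. Calling this step ``essentially soft'' underestimates both the constant computation and the fact that the endpoint $\alpha=12\pi^2$ is the hard case.

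For Step 3, your outline (rescale, classify the half-space limit, capacity estimate, test function) is correct in broad strokes, and you correctly flag the two main difficulties, but you miss specific obstructions the paper needs new ideas for. First, the rescaled limit must live on $\mathbb{R}^4_+$, which requires the blow-up sequence to concentrate at a \emph{boundary} point in a controlled way; because $\Delta^2$ has no maximum principle, one cannot conclude directly that the maximum of $|u_k|$ sits on $\partial\Omega$. The paper uses the Neumann condition $\partial u_k/\partial\nu=0$ to show there is a boundary point $\tilde x_k$ with $|u_k(\tilde x_k)-u_k(x_k)|=o(1)$; without this there is no half-space limit. Second, to pass to the blow-up limit one needs a scale-invariant estimate $\int_{B_\rho(x_k)\cap\Omega}|u_k\Delta u_k|\le C\rho^2$, which the paper proves by showing $u_k\Delta u_k$ is bounded in $L^{2,\infty}$ via Riesz potentials and an $L\log^{1/2}L$ bound for $\Delta^2 u_k$; this is a nontrivial piece of harmonic analysis, not a routine elliptic estimate. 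Third, the admissible class $\mathcal{H}$ enforces both $\|\Delta u\|_2\le1$ \emph{and} $\partial u/\partial\nu|_{\partial\Omega}=0$, so both the bi-harmonic truncations and the final test function must be built to respect the Neumann constraint (this is why the paper introduces the correction $\rho_k$ in $\mathcal W_k$ and verifies Remark~\ref{remark}); omitting this breaks the capacity comparison. These are the genuinely new ingredients, and your proposal would need to supply them.
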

As an immediate consequence of Theorem \ref{mainre}, we have the following result when $\Omega$ is a four dimensional ball $\mathbb{B}^4$ and $\mathbb{S}^3$ is its boundary.

\begin{corollary}\label{trace-ball}
If $\alpha\leq12\pi^2$, we have
\begin{equation}\label{tracethm2}
S(\alpha, \mathbb{B}^4):=\underset{u\in\mathcal{H}\setminus\{0\}}{{\sup}}\int_{\mathbb{S}^3} e^{\alpha u^{2}}d\sigma<\infty.
\end{equation}
The constant $12\pi^2$ is sharp in the sense that if $\alpha >12\pi^2$, then the supremum $S(\alpha, \mathbb{B}^4)$ is infinity. Moreover, the supremum $S(\alpha, \mathbb{B}^4)$ is attained if $\alpha\leq12\pi^2$.

\end{corollary}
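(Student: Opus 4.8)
The plan is to obtain Corollary \ref{trace-ball} as a direct specialization of Theorem \ref{mainre}, since the unit ball is a particular instance of the class of domains covered there. The only point to verify is that $\mathbb{B}^4\subset\mathbb{R}^4$ satisfies the hypotheses of the theorem: it is a bounded domain, and its boundary $\mathbb{S}^3$ is a smooth (indeed real-analytic) closed hypersurface. Consequently, for $\Omega=\mathbb{B}^4$ the admissible class $\mathcal H$ introduced just before Theorem \ref{mainre} is exactly $\{u\in W^{2,2}(\mathbb{B}^4):\|\Delta u\|_2\le1,\ \int_{\mathbb{B}^4}u\,dx=0,\ \partial u/\partial\nu|_{\mathbb{S}^3}=0\}$, the surface measure $d\sigma$ on $\partial\Omega$ becomes the standard measure on $\mathbb{S}^3$, and $S(\alpha,\mathbb{B}^4)$ in \eqref{tracethm2} is precisely the quantity $S(\alpha)$ of \eqref{tracethm} for this choice of $\Omega$.

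With this identification in hand, each of the three assertions of the corollary is read off from the corresponding assertion of Theorem \ref{mainre}. First, taking $\alpha\le12\pi^2$ and $\Omega=\mathbb{B}^4$ in \eqref{tracethm} yields $S(\alpha,\mathbb{B}^4)=S(\alpha)<\infty$, which is \eqref{tracethm2}. Second, the sharpness part of Theorem \ref{mainre}, applied with $\Omega=\mathbb{B}^4$, gives $S(\alpha,\mathbb{B}^4)=+\infty$ whenever $\alpha>12\pi^2$; concretely, the concentrating family of test functions built in the proof of Theorem \ref{mainre} near a boundary point of a general smooth domain may be centered at any prescribed point of $\mathbb{S}^3$, so no new construction is required. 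Third, the attainment statement of Theorem \ref{mainre} with $\Omega=\mathbb{B}^4$ produces an extremal $u\in\mathcal H$ for $S(\alpha,\mathbb{B}^4)$ for every $\alpha\le12\pi^2$.

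I do not expect any genuine obstacle: the corollary is a pointwise instance of the theorem, and the argument is finished once the geometric hypotheses on $\mathbb{B}^4$ and $\mathbb{S}^3$ are checked. If a more self-contained treatment of the ball were desired, one could alternatively exploit the full rotational symmetry of $\mathbb{B}^4$ to reduce the extremal problem to a one-dimensional variational problem via symmetrization, in the spirit of Carleson--Chang \cite{c-c}; but this refinement is unnecessary here, since Theorem \ref{mainre} already handles arbitrary bounded smooth domains, and $\mathbb{B}^4$ is one of them.
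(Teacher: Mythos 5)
Your proposal is correct and is exactly the paper's own argument: Corollary \ref{trace-ball} is stated as an immediate consequence of Theorem \ref{mainre}, obtained by taking $\Omega=\mathbb{B}^4$ with $\partial\Omega=\mathbb{S}^3$, which plainly satisfies the smooth bounded-domain hypothesis. Nothing further is required.
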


As an application of Theorem \ref{mainre}, we can derive the following trace Adams-Onofri type inequality on any four dimensional bounded domains.

\begin{theorem}\label{Adamsonifri}Assume that  $\Omega\subseteq \mathbb{R}^4$ is a bounded smooth domain with smooth boundary $\partial\Omega$.
For any $u\in W^{2,2}(\Omega)$ with $\frac{\partial u}{\partial\nu}=0$ on $\partial\Omega$, there exists a constant $C$ such that
$$\frac{1}{48\pi^2}\int_{\Omega}|\Delta u|^2dx+\frac{1}{|\Omega|}\int_{\Omega}udx-\log\big(\int_{\partial\Omega}e^{u}d\sigma\big)\geq C.$$
\end{theorem}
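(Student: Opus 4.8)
The plan is to obtain Theorem \ref{Adamsonifri} as a direct consequence of the sharp trace Adams inequality in Theorem \ref{mainre}, via the classical linearization argument in which the Dirichlet-type term $\frac{1}{48\pi^2}\int_\Omega|\Delta u|^2\,dx$ is produced from the exponential integrability by Young's inequality.

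First I would reduce to the zero mean value situation. Writing $u = u_\Omega + v$ with $u_\Omega = \frac{1}{|\Omega|}\int_\Omega u\,dx$ and $\int_\Omega v\,dx = 0$, note that $u_\Omega$ is a constant, so $\frac{\partial v}{\partial\nu} = 0$ on $\partial\Omega$, $\Delta v = \Delta u$, and $\int_{\partial\Omega}e^u\,d\sigma = e^{u_\Omega}\int_{\partial\Omega}e^v\,d\sigma$. Hence the term $\frac{1}{|\Omega|}\int_\Omega u\,dx = u_\Omega$ and the $u_\Omega$ coming from the logarithm cancel, and it suffices to prove
\[
\frac{1}{48\pi^2}\int_\Omega|\Delta v|^2\,dx - \log\Big(\int_{\partial\Omega}e^{v}\,d\sigma\Big) \ge C
\]
for all $v\in W^{2,2}(\Omega)$ with $\int_\Omega v\,dx = 0$ and $\frac{\partial v}{\partial\nu} = 0$ on $\partial\Omega$.

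Next I would dispose of the degenerate case $\|\Delta v\|_2 = 0$: then $v$ is harmonic, and the zero Neumann datum together with $\int_\Omega v\,dx = 0$ forces $v\equiv 0$, so the left-hand side equals $-\log|\partial\Omega|$ and the claim is trivial. If $t := \|\Delta v\|_2 > 0$, set $w = v/t$, so that $w\in\mathcal{H}$. Young's inequality (equivalently, expanding the square $(\frac{t}{\sqrt{48\pi^2}} - \sqrt{12\pi^2}\,w)^2\ge 0$) gives the pointwise bound $v = tw \le \frac{t^2}{48\pi^2} + 12\pi^2 w^2$, whence, using Theorem \ref{mainre} with $\alpha = 12\pi^2$ since $w\in\mathcal{H}$,
\[
\int_{\partial\Omega}e^{v}\,d\sigma \le e^{t^2/(48\pi^2)}\int_{\partial\Omega}e^{12\pi^2 w^2}\,d\sigma \le S(12\pi^2)\, e^{t^2/(48\pi^2)}.
\]
Taking logarithms and recalling $t^2 = \|\Delta v\|_2^2 = \|\Delta u\|_2^2$ yields the inequality with $C = -\log S(12\pi^2)$, which is finite by Theorem \ref{mainre}.

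The argument is entirely elementary once Theorem \ref{mainre} is in hand, so there is no genuine obstacle; the only points worth stressing are that the coefficient $\frac{1}{48\pi^2}$ is exactly the one forced by the Young splitting — the quadratic term in $w$ must come out as precisely $12\pi^2 w^2$ in order to invoke the sharp exponent, and any strictly smaller coefficient than $\frac{1}{48\pi^2}$ would require an exponent larger than $12\pi^2$, for which $S(\alpha) = +\infty$ — and that the zero Neumann condition plus the zero-mean normalization are used to kill the harmonic kernel in the degenerate case.
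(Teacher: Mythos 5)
Your proof is correct. The paper does not actually spell out a proof of Theorem~\ref{Adamsonifri} --- it is simply stated as an application of Theorem~\ref{mainre} --- and the linearization argument you give (subtract the mean to reduce to $v\in\mathcal{H}$ with $\|\Delta v\|_2=t$, then apply the pointwise Young inequality $tw\le \tfrac{t^2}{48\pi^2}+12\pi^2 w^2$ coming from $\left(\tfrac{t}{\sqrt{48\pi^2}}-\sqrt{12\pi^2}\,w\right)^2\ge 0$, and then invoke $S(12\pi^2)<\infty$) is exactly the intended derivation. All steps check out: the reduction to zero mean is clean because $u_\Omega$ is constant so $\Delta v=\Delta u$ and the Neumann condition is preserved; the $u_\Omega$ contributions cancel between the linear term and the logarithm; the degenerate case $\|\Delta v\|_2=0$ forces $v\equiv 0$ by the harmonicity/Neumann/zero-mean argument and is then absorbed because $S(12\pi^2)\ge|\partial\Omega|$; and the constant $\tfrac{1}{4\cdot 12\pi^2}=\tfrac{1}{48\pi^2}$ is precisely what the Young splitting requires to hit the sharp exponent $12\pi^2$. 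Nothing to add.
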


\begin{remark} The first order Sobolev trace inequality  was due to Escobar \cite{Es} and Beckner \cite{Be}. The second order and higher order Sobolev trace inequalities were established by Ache and Chang \cite{Ache} and Q. Yang \cite{YangQ} respectively.
 The Moser-Onofri and Adams-Onofri inequalities on the sphere can be obtained by using the endpoint differentiation argument (see Beckner's work in \cite{Be}). However, this method cannot be used to establish the sharp trace Adams-Onofri inequality on general four dimensional bounded domains due to its absence of conformal invariance. Our Theorem \ref{Adamsonifri} offers a weak version of such Moser-Onofri type inequality on general bounded domains in $\mathbb{R}^4$ from the sharp Adams trace inequality obtained in Theorem \ref{mainre}.
\end{remark}

The general strategy we use here is exploiting the  blow-up analysis.
We first prove the subcritical trace Adams inequalities and the existence of extremals by using the sharp subcritical Adams inequalities involved with zero mean value.  Then, we take a sequence $\alpha
_{k}\rightarrow12\pi^{2}$ and find a  maximizing sequence
$\{u_{k}\}_k \subset W^{2,2}\left(
\Omega\right)  $ for $S\left(  12\pi^2\right)  $. If $u_{k}$ is bounded in
$L^{\infty}\left(
\Omega\right)  $, i.e. $c_{k}:=\underset{x\in%
\Omega}{\max}\left\vert u_{k}(x)\right\vert <\infty$, we can easily show that
$u_{k}$ converges to a function $u\,$in\ $W^{2,2}\left(
\Omega\right)  $ by elliptic estimates. If $c_{k}\rightarrow
+\infty$, i.e. the blow-up arises, we apply the blow-up analysis method to
analyze the asymptotic behavior of $u_{k}$ near and far away from the blow-up point $p\in\partial\Omega$, and derive an upper bound for the trace Adams
functional:
\begin{equation}\label{addupper}
S\left(  12\pi^2\right)  \leq  \left| \partial\Omega  \right| + 2\pi^2 e^{12\pi^2 A_p-\frac{3}{4}},
\end{equation}
where $A_p$ is the value at $p$ of the trace of the regular part of the Green
function for the operator $\Delta^{2}+\frac{1}{|\Omega|}$. Finally, we construct a function
sequence in $\mathcal{H}$ to show that the upper bound can actually be surpassed, which implies that the concentration phenomenon will not happen.
\vskip 0.1cm

Neither the blow-up strategy in the study of second order Adams inequality with Dirichlet boundary condition (see \cite{lu-yang 1}) and nor the blow-up method for the first order trace Trudinger-Moser inequality and existence of their extremals (see \cite{Liliu,   Yang}) can be easily generalized to second order trace Adams inequality case.
In the following, we will introduce the main difference between the proof of the second order trace Adams inequality and those  for the second order Adams inequality with Dirichlet boundary condition and the trace Trudinger-Moser blow-up analysis (see [31],[46]). We will also outline the elements of novelty when we carry out the blow-up procedure for our second order trace Adams inequality.
\vskip0.1cm

First of all, the main difference between the blow-up analysis for the Adams inequality   and that for the Adams trace inequality blowing-up is the location of the blow-up points. For the former, the blow-up points must be at some interior points, while for the latter the blow-up points must lie on the boundary of  $\Omega$, which leads to the situation where  the related Euler-Lagrange equations of the maximizer sequence are some bi-harmonic equations with the Neumann boundary condition and the analysis of asymptotic behavior near and far away from the blow-up points is totally different.

Second, unlike the first order case, one cannot show that the maximum point $x_k$ of $u_k$ lies on the boundary $\partial\Omega$ due to the lacking of maximum principle of the bi-harmonic operator $\Delta^2$. We stress that without this fact one cannot analyze the asymptotic behavior of $u_k$ near the blow up points $p\in\partial\Omega$ if the concentration phenomenon occurs. To overcome this difficulty, we will make use of the assumption that $\frac{\partial u_k}{\partial \nu}=0$ on $\partial\Omega$ to show that there exists some point $\tilde{x}_k\in\partial\Omega$ such that
$$|u_k(\tilde{x}_k)-u_k(x_k)|=o_k(1).$$ This important observation allows us to choose the maximum point $x_k$ on the boundary $\partial\Omega$.
\vskip0.1cm

Third, when we try to analyze the asymptotic behavior of $u_{k}$ near the blow up point $p$, a crucial
step is to classify the solutions to the following Liouville equation on the half space $\mathbb{R}^4_+:=\{x=(x',t)|x'\in \mathbb{R}^3,t>0\}$: \begin{equation*}
\begin{cases}
\Delta^2\psi=0,\  & x\in \mathbb{R}_{+}^{4},\\
\frac{\partial\Delta\psi}{\partial t}=\exp(24\pi^2 \psi),\ & x\in \partial\mathbb{R}_{+}^{4}, \\
\psi(0)=\sup \psi=0,\\
 \frac{\partial\psi}{\partial t}=0, \ \ & x\in \partial\mathbb{R}_{+}^{4}.
\end{cases}
\end{equation*}

In order to achieve this goal, we need to prove an important local estimate for $\Delta u_{k}$:
\begin{equation}\label{addlo}
\int_{B_{\rho}(x_k)\bigcap\Omega}|u_k\Delta u_{k}|dx\leq C\varrho^{2},%
\end{equation}
when $\rho$ is small. For this, we first rewrite $u_k\Delta u_{k}$
in terms of the Riesz potentia, then by using the Hardy-Littlewood-Sobolev inequality on the compact manifold with the boundary and the boundedness in $L\log^{\frac{1}{2}}L(\Omega)$ of $\Delta^2u_k$, we can show that $u_k\Delta u_{k}$ is bounded in the Lorentz space $L^{2,\infty}(\Omega)$, which implies \eqref{addlo}. Applying this local estimate and a careful computation, we can show that the solution $\psi$ must take the form as
$$\psi=-\frac{1}{8\pi^2}\log\left( \left(1+\left({\frac{\pi}{2}}\right)^{\frac{2}{3}}t   \right)^2+  \left({\frac{\pi}{2}}\right)^{\frac{4}{3}}|x'|^2   \right)+\frac{1}{2^{\frac{8}{3}}\pi^{\frac{4}{3}}}\frac{t}{  \left(1+\left({\frac{\pi}{2}}\right)^{\frac{2}{3}}t  \right)^2+  \left({\frac{\pi}{2}}\right)^{\frac{4}{3}}|x'|^2   }.$$
\vskip0.1cm

Fourth, when we try to obtain the upper bound for the trace Adams inequality if the concentration phenomenon occur, an important step consists in finding the sharp lower bounds of the integral of $|\Delta u_k|^2$ on some annular regions when we carry out the capacity estimates. In the earlier work of \cite{Liliu,Yang}, this can be achieved by comparing the energy of $u_k$ with the quantity \begin{equation}
\min _{u \in \{u| u(R_1)=a,u(R_2)=b\}} \int_{\left\{R_{1} \leq|x| \leq R_{2}\right\}\bigcap \mathbb{R}^2_+}\left|\nabla u\right|^{2} dx,
\end{equation}
whose extremal function is some harmonic function which can be explicitly obtained by solving some equation on the half space. However, in the second order case, finding the explicit expression of the corresponding extremal appears to be very hard.  In this work, we will compute the upper bound by directly comparing the Dirichlet energy of $u_k$ with some bi-harmonic function in the half annular region. In our situation the boundary of the upper half annular region involves some part of $\partial\Omega$ where $u_k$ is not vanishing. This will add a lot of trouble in the comparison of the corresponding calculations since the asymptotic behavior of $u_k$ cannot be obtained in this half annular region. In order to avoid the complicated computations on $\partial\Omega$, we will modify the bi-harmonic function to cancel the integral on the boundary $\partial \Omega$ (see Subsection \ref{Neck}).
\vskip0.1cm

Finally, since $\mathcal{H}$ requires that the test  functions not only satisfy $\|\Delta u\|^2\leq 1$, but also satisfy $\frac{\partial u}{\partial \nu}=0$ on $\partial\Omega$, this makes the construction of test functions more complicated when we try to show that the concentration upper bound can be surpassed.
\medskip

This paper is organized as follows. Section 2 is devoted to proving the sharp subcritical trace Adams inequality, and show the existence of extremals;  in Section 3, we show that the maximizing
sequence must concentrate around the blow-up point when the blow up arises. Moreover, we analyze the asymptotic behavior of the maximizing sequence  near and far away from the blow-up point, and derive an upper bound for the trace Adams functional; in Section 4, we prove the existence of extremals by constructing a proper test function sequence.

\section{The best constant for the trace Adams inequality}

In this section, we prove that the best constant in Theorem \ref{mainre} is $12\pi^2$. First, we recall the following subcritical Adams' inequalities for functions with mean value zero. Throughout this section, we let $\Omega\subseteq \mathbb{R}^4$ be a bounded smooth domain with smooth boundary $\partial\Omega$. We also recall that
 $$\mathcal{H}=\{u\in W^{2,2}(\Omega):\|\Delta u\|_2\leq 1,\int_{\Omega}udx=0,\frac{\partial u}{\partial \nu}\mid_{\partial\Omega}=0\}.$$

\begin{lemma}[\cite{Hang1},Theorem 3.2]\label{subcritical}
For any $\varepsilon>0$, we have
 \begin{equation}\label{sub-Tru}
\sup_{u\in\mathcal{H}\backslash\{0\}}\int_{\Omega}\exp((16\pi^2-\varepsilon)|u|^{2})dx<\infty.
\end{equation}

\end{lemma}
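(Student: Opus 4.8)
Since the statement is quoted from \cite{Hang1}, one may simply refer to that paper; for the record, here is how I would argue, following the standard Adams scheme for spaces without full Dirichlet data.

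\textbf{A priori bound and localization.} For $u\in\mathcal H$ put $f=\Delta u$, so $\|f\|_2\le1$ and $\int_\Omega f\,dx=\int_{\partial\Omega}\tfrac{\partial u}{\partial\nu}\,d\sigma=0$. Testing $-\Delta u=-f$ against $u$ and using $\tfrac{\partial u}{\partial\nu}=0$ gives $\|\nabla u\|_2^2=\int_\Omega u(-\Delta u)\le\|u\|_2\|f\|_2$, and with the zero–mean Poincar\'e inequality $\|u\|_2\le C\|\nabla u\|_2$ this yields $\|u\|_{W^{1,2}(\Omega)}\le C$, hence $\|u\|_{W^{2,2}(\Omega)}\le C$ by Neumann elliptic regularity; so $\mathcal H$ is bounded in $W^{2,2}(\Omega)$. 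Since $\overline\Omega$ is compact, it suffices to find, for each $p\in\overline\Omega$, a radius $\rho_p>0$ and a constant $C_p$ with
\[
\sup_{u\in\mathcal H}\int_{B_{\rho_p}(p)\cap\Omega}\exp\big((16\pi^2-\varepsilon)u^2\big)\,dx\le C_p ,
\]
and then to sum over a finite subcover.

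\textbf{Green representation and boundary structure.} Let $G_N(x,y)$ be the Neumann Green function of $-\Delta$ on $\Omega$, normalized by $\int_\Omega G_N(x,y)\,dy=0$; then $u(x)=-\int_\Omega G_N(x,y)f(y)\,dy$. The key point is that near $\partial\Omega$ the Neumann function carries, besides the free singularity, a reflected image singularity with the \emph{same} leading constant: there is $\delta>0$ such that for $x,y$ within distance $\delta$ of $\partial\Omega$,
\[
|G_N(x,y)|\le\frac{1}{4\pi^2}\Big(|x-y|^{-2}+|x-y^{\ast}|^{-2}\Big)+C ,
\]
where $y^{\ast}$ is the reflection of $y$ across $\partial\Omega$ through the nearest–point projection and the remainder is of strictly lower order, hence contributes a bounded function to $u$; at points at a fixed distance from $\partial\Omega$ only the free singularity $\tfrac1{4\pi^2}|x-y|^{-2}$ is present. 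Heuristically this reflected term is the infinitesimal version of the fact that the even reflection of $u$ across $\partial\Omega$ is a $W^{2,2}$ function on the doubled domain with $\|\Delta\,\cdot\,\|_2^2\le2\|\Delta u\|_2^2$, which is exactly what halves the sharp exponent from $32\pi^2$ to $16\pi^2$.

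\textbf{Reduction to Adams' Riesz inequality and conclusion.} Recall Adams' inequality in Riesz–potential form \cite{A}: if $h\ge0$ is supported in a bounded set $D\subset\mathbb R^4$ and $\|h\|_2\le1$, then $\int_D\exp\big(\tfrac2{\pi^2}(I_2h)^2\big)\,dx\le C(D)$ with $I_2h(x)=\int|x-y|^{-2}h(y)\,dy$; its special case $h=\Delta u$ is the familiar bound $\sup\{\int_De^{32\pi^2u^2}:u\in W_0^{2,2}(D),\ \|\Delta u\|_2\le1\}<\infty$. Fix $p\in\partial\Omega$ and take $\rho_p<\delta/2$. For $x\in B_{\rho_p}(p)\cap\Omega$ the contribution to $u(x)$ from $G_N(x,y)$ with $\operatorname{dist}(y,\partial\Omega)\ge\delta$, or with $y$ away from $p$, is bounded (there $|x-y|\gtrsim\delta$), so by the previous step $|u(x)|\le\tfrac{1+C\delta}{4\pi^2}\,I_2\widetilde f(x)+C$, where $\widetilde f$ is the even reflection across $\partial\Omega$ of $|f|$ restricted to a small neighborhood of $p$ in $\Omega$, which satisfies $\|\widetilde f\|_2^2\le2(1+C\delta)\|f\|_2^2\le2(1+C\delta)$. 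Writing $g=\widetilde f/\|\widetilde f\|_2$ (so $\|g\|_2\le1$) and squaring, we get on $B_{\rho_p}(p)\cap\Omega$, for a parameter $\theta>0$ that can be made arbitrarily small by shrinking $\delta$ and $\rho_p$,
\[
(16\pi^2-\varepsilon)\,u^2\le\frac{(16\pi^2-\varepsilon)(1+\theta)}{8\pi^4}\,(I_2g)^2+C_\theta .
\]
Since $16\pi^2-\varepsilon<16\pi^2$, we may take $\theta$ small enough that the coefficient of $(I_2g)^2$ is at most $\tfrac2{\pi^2}=\tfrac{16\pi^2}{8\pi^4}$, and Adams' Riesz inequality then bounds $\int_{B_{\rho_p}(p)\cap\Omega}e^{(16\pi^2-\varepsilon)u^2}dx$ uniformly in $u\in\mathcal H$. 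For an interior $p$ one argues the same way with the free singularity alone (and $\|h\|_2\le1$), where even the exponent $32\pi^2$ is admissible. Summing these local estimates over a finite subcover of $\overline\Omega$ proves the lemma.

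\textbf{Main obstacle.} The only genuinely delicate ingredient is the second step: showing that near $\partial\Omega$ the direct and the reflected singularities of $G_N$ occur with the \emph{same} coefficient $\tfrac1{4\pi^2}$, up to a lower–order remainder. It is precisely this reflected term — equivalently, the energy doubling under even reflection, which uses the hypothesis $\tfrac{\partial u}{\partial\nu}=0$ — that lowers the admissible exponent from the interior value $32\pi^2$ to $16\pi^2$; but since we need it only up to factors $1+o(1)$, it comfortably suffices for the subcritical inequality.
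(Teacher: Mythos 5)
The paper states Lemma \ref{subcritical} as a direct citation of \cite{Hang1}, Theorem~3.2, without giving a proof, so there is no in-paper argument to compare against; your proposal is therefore a self-contained alternative, and it is correct in its essentials. The Adams-style reduction through the Neumann Green function is sound: the a priori $W^{2,2}$ bound and localization are routine, and the substantive point -- that $G_N$ near $\partial\Omega$ carries a direct and an image singularity, both with coefficient $\tfrac{1}{4\pi^2}$, so that $|u|\le\tfrac{1+O(\delta)}{4\pi^2}I_2\tilde f+O(1)$ with $\|\tilde f\|_2^2\le 2\bigl(1+O(\delta)\bigr)$ -- is precisely what lowers the Dirichlet constant $32\pi^2$ to $16\pi^2$; the numerology $\tfrac{2}{\pi^2}=\tfrac{16\pi^2}{8\pi^4}$ checks out, and the subcritical margin $\varepsilon$ is indeed what absorbs all of the $1+O(\delta)$ factors coming from flattening the boundary. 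You rightly flag the boundary expansion of $G_N$ as the only delicate ingredient; to make it airtight one should exhibit, in Fermi coordinates, an identity $G_N(x,y)=\tfrac{1}{4\pi^2}\bigl(|x-y|^{-2}+|x-y^\ast|^{-2}\bigr)+r(x,y)$ with $\sup_x\|r(x,\cdot)\|_{L^2(\Omega)}<\infty$ for $x$ near $\partial\Omega$, which guarantees that the remainder contributes a uniformly bounded function to $u$ when $\|\Delta u\|_2\le 1$. In spirit this is the same mechanism used in \cite{Hang1}, where the subcritical estimate is also obtained via a Green/parametrix expansion on the manifold with boundary; your explicitly Euclidean image-charge formulation is simply a more concrete rendering of the same energy-doubling-under-even-reflection principle, and it works.
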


Next, we further prove the following

\begin{lemma}\label{best constant}
Set $\alpha_{2}=\sup \left\{\alpha: \sup _{u \in \mathcal{H}} \int_{\Omega} e^{\alpha u^{2}}<+\infty\right\} $. Then $\alpha_{2}=16\pi^2$.
\end{lemma}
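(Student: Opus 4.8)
The lower bound $\alpha_2\ge 16\pi^2$ is immediate from Lemma \ref{subcritical}: for every $\varepsilon>0$ we have $\sup_{u\in\mathcal H}\int_\Omega e^{(16\pi^2-\varepsilon)u^2}\,dx<\infty$, so every $\alpha<16\pi^2$ lies in the set defining $\alpha_2$, whence $\alpha_2\ge16\pi^2$. The content of the lemma is therefore the reverse bound $\alpha_2\le 16\pi^2$, i.e.\ that for each fixed $\alpha>16\pi^2$ there is a sequence $u_k\in\mathcal H$ with $\int_\Omega e^{\alpha u_k^2}\,dx\to\infty$. The plan is to obtain such a sequence by concentrating test functions at a \emph{boundary} point rather than an interior one: this is the decisive place to concentrate because the constraint $\partial u/\partial\nu=0$ allows a concentrating bump to be reflected evenly across $\partial\Omega$, so that only ``half'' a full concentrating profile is paid for in the $\|\Delta\cdot\|_2$-norm; this is exactly what turns the interior Dirichlet threshold $\alpha(4,2)=32\pi^2$ of \eqref{adams} into $16\pi^2$.

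Here is the construction. Fix $p\in\partial\Omega$ and work in Fermi (boundary-normal) coordinates near $p$, in which $\Omega$ becomes $\{x_4>0\}$, $p$ the origin, $\partial/\partial x_4$ the inner unit normal along $\partial\Omega$, and the metric Euclidean at the origin. Let $w_\delta\in W_0^{2,2}(B_1)$, $B_1\subset\mathbb R^4$, be the classical \emph{radial} Adams--Moser test functions normalized by $\|\Delta w_\delta\|_{L^2(B_1)}=1$; since $2\alpha>32\pi^2=\alpha(4,2)$, the sharpness part of \eqref{adams} (applied with exponent $2\alpha$) gives $\int_{B_1}e^{2\alpha\,w_\delta^2}\,dx\to\infty$ as $\delta\to0$. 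Radial symmetry yields $\partial_{x_4}w_\delta=0$ on $\{x_4=0\}$ and, by the reflection $x_4\mapsto-x_4$,
\[
\int_{B_1^+}|\Delta w_\delta|^2\,dx=\tfrac12,\qquad \int_{B_1^+}e^{2\alpha\,w_\delta^2}\,dx=\tfrac12\int_{B_1}e^{2\alpha\,w_\delta^2}\,dx\longrightarrow\infty,
\]
where $B_1^+=B_1\cap\{x_4>0\}$. Hence $v_\delta:=\sqrt2\,w_\delta|_{B_1^+}$ satisfies $\|\Delta v_\delta\|_{L^2(B_1^+)}=1$, $\partial_{x_4}v_\delta=0$ on $\{x_4=0\}$, and $\int_{B_1^+}e^{\alpha v_\delta^2}\,dx\to\infty$.

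Next I would transplant $v_\delta$ into $\Omega$ and repair the constraints defining $\mathcal H$. After a dilation (which leaves $\|\Delta\cdot\|_{L^2(\mathbb R^4)}$ invariant, this norm being scale invariant in $\mathbb R^4$, and only scales the exponential integral by a positive constant) we may assume $v_\delta$ is supported in the coordinate half-ball around $p$; let $\tilde v_\delta$ be the resulting function on $\Omega$, extended by $0$. In Fermi coordinates the transplant preserves $\partial_\nu\tilde v_\delta=0$ on $\partial\Omega$, and because the support shrinks and the metric is Euclidean at $p$ we get $\|\Delta\tilde v_\delta\|_{L^2(\Omega)}^2=1+o(1)$ and $\int_\Omega e^{\alpha\tilde v_\delta^2}\,dx\to\infty$. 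Finally set $u_\delta=(\tilde v_\delta-\bar v_\delta)/\|\Delta\tilde v_\delta\|_{L^2(\Omega)}$ with $\bar v_\delta=\tfrac1{|\Omega|}\int_\Omega\tilde v_\delta\,dx$; for a concentrating Adams--Moser profile $\|\tilde v_\delta\|_{L^1(\Omega)}\to0$, so $\bar v_\delta\to0$, subtracting a constant changes neither $\Delta$ nor $\partial_\nu$, and on the plateau of $\tilde v_\delta$ one still has $u_\delta^2=\tilde v_\delta^2(1+o(1))$. Thus $u_\delta\in\mathcal H$ and $\int_\Omega e^{\alpha u_\delta^2}\,dx\to\infty$, so $\alpha_2\le16\pi^2$; combined with the first paragraph, $\alpha_2=16\pi^2$.

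The main thing to watch is the interaction of the two limiting processes: the errors from the change of variables, from renormalizing $\|\Delta\cdot\|_2$ to exactly $1$, and from subtracting the mean must all be $o(1)$ \emph{relative to} the divergence rate of $\int_\Omega e^{\alpha u_\delta^2}\,dx$, which one handles by concentrating first and localizing the coordinate patch afterwards. Two further points deserve emphasis: one genuinely needs \emph{boundary} concentration, since an interior Adams--Moser bump only produces divergence for $\alpha>32\pi^2$ and would leave the range $16\pi^2<\alpha\le32\pi^2$ untreated; and one should record (or quote) the explicit radial Adams--Moser sequence together with its blow-up property, which is the only external ingredient beyond the sharpness assertion already contained in \eqref{adams}.
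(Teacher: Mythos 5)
Your proposal is correct and follows essentially the same route as the paper: concentrate a radial Moser--Adams profile at a boundary point $p\in\partial\Omega$, restrict it to the half-ball (halving the Dirichlet energy, hence the factor $\sqrt2$ and the threshold $16\pi^2$ in place of the interior value $32\pi^2=\alpha(4,2)$), and then repair the mean-value and Neumann constraints before normalizing. The only point worth underlining --- which you correctly flag yourself --- is that the abstract sharpness statement in \eqref{adams} does not, by itself, supply a \emph{radial} extremizing sequence with $\partial_{x_4}w_\delta=0$ on the equator and vanishing $L^1$-mass; one must actually write down the explicit radial Moser--Adams profile, which is precisely what the paper does (its piecewise-defined $u_k$ is that profile, with plateau height $\sqrt{\tfrac{1}{16\pi^2}\log\tfrac1{R_k}}$), the paper working in Euclidean coordinate balls near $p$ and inserting a correction $\varphi_k$ to enforce $\partial_\nu u=0$ where you use Fermi coordinates for the same purpose.
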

\begin{proof}
By Lemma \ref{subcritical}, we know $\alpha_{2}\geq16 \pi^2$. In order to prove the lemma, we only need to show $\alpha\leq 16\pi^2$.
Taking any $p\in\partial\Omega$, for any $0<\rho<\delta$, we use the notation $B_{\rho}=B_{\rho}(p)$ and set
$$
u_{k}(x):= \begin{cases}\sqrt{\frac{1}{16 \pi^{2}} \log \frac{1}{R_{k}}}-\frac{|x|^{2}}{\rho^2\sqrt{4 \pi^{2} R_{k} \log \frac{1}{R_{k}}}}+\frac{1}{\sqrt{4 \pi^{2} \log \frac{1}{R_{k}}}}, & \text { if } x\in B_{\rho\sqrt[4]{R_{k}}}\bigcap\Omega, \\ \frac{1}{\sqrt{ \pi^{2} \log \frac{1}{R_{k}}}} \log \frac{\rho}{|x|}, & \text { if } x\in \left(B_{\rho}\setminus B_{\rho\sqrt[4]{R_{k}}}\right)\bigcap\Omega, \\ \eta_{k}(|x|), & \text { if } x\in \left(B_{\delta}\setminus B_{\rho}\right)\bigcap\Omega,\end{cases}
$$
where $\left\{R_{k}\right\}_{k \geq 1} \subset \mathbb{R}^{+}, R_{k} \searrow 0$, and $\eta_{k}$ satisfies $\left.\frac{\partial \eta_{k}}{\partial \nu}\right|_{\partial B_{\rho}}=-\frac{1}{\rho\sqrt{ \pi^{2} \log \frac{1}{R_{k}}}}$,$\left.\frac{\partial \eta_{k}}{\partial \nu}\right|_{\partial B_{\delta}}=0$, $\left.\eta_{k}\right|_{\partial B_{\rho}}=\left.\eta_{k}\right|_{\partial B_{\delta}}=0$,
and $\eta_{k}$, $\Delta \eta_{k}$ are all $O\left(1 / \sqrt{\log 1 / R_{k}}\right)$.
Since $u_k$ is radial, we can choose some function $\varphi_k(x)$ such that $$\frac{\partial \varphi_k}{\partial \nu}=\frac{\partial u_k}{\partial \nu}=o_\delta(1)\ \text{for } x\in \partial\Omega, $$ $\varphi_k(x)=o_\delta(1) $  and $\left\|\Delta \varphi_{k}\right\|_{2}^{2}=o_\delta(1)$ as $\delta\rightarrow0.$
\vskip0.1cm

For some fixed $r>\delta$, set $$
U_{k}(x):= \begin{cases} u_k-\varphi_k, & \text { if } x\in B_{\delta}\bigcap\Omega, \\ t_k\phi_k, & \text { if } x\in \left(B_{r}\setminus B_{\delta}\right)\bigcap\Omega, \end{cases}
$$
where $\phi_k $ is a smooth function such that $supp (\phi_k) \subset B_{r}\setminus B_{\delta}$, $\frac{\partial \phi_k}{\partial \nu}\big|_{\partial\Omega}=0$, and $t_k$ is selected such that $\int_{\Omega}U_{k}(x)dx=0$. Easy computation directly gives
$$
\left\|U_{k}\right\|_{2}^{2}=O_{k,r}(1), \quad \left\|\Delta U_{k}\right\|_{2}^{2}=1+O_{k,r}(1).
$$
Normalizing ${U}_k$ by $\tilde{U}_k=\frac{U_k}{\left\|\Delta U_{k}\right\|}$, we have  $\tilde{U}_k\in \mathcal{H}$. Then it follows that  for any fixed $\alpha>16\pi^2$, there exists some $\varepsilon_0>0,$ such that $$\int_{\Omega} e^{\alpha \tilde{U}_k^{2}}\geq \int_{\Omega\bigcap B_{\rho\sqrt[4]{R_{k}} }} e^{\alpha \tilde{U}_k^{2}}\geq c\rho^4 e^{\varepsilon_0 \log \frac{1}{R_{k}} }\rightarrow \infty,$$
as $k\rightarrow \infty$, and the proof is finished.
\end{proof}

Based on Lemma \ref{best constant}, we can show that the best constant of the inequality (\ref{tracethm}) is $12\pi^2$.
\begin{lemma}
Set $I_{\alpha}(u)=\int_{\partial \Omega} e^{\alpha u^{2}}d\sigma$. Then we have
$$
\sup _{u \in\mathcal{ H}\setminus\{0\}} I_{\alpha}(u)<+\infty \text { for } \alpha<12\pi^2 \text { and }  \sup _{u \in \mathcal{H}\setminus\{0\}} I_{\alpha}(u)=+\infty \text { for } \alpha>12\pi^2.
$$
\end{lemma}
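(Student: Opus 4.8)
The plan is to handle the two endpoints separately. For $\alpha<12\pi^{2}$ I would transfer the finiteness of the volume integral $\int_{\Omega}e^{\beta u^{2}}\,dx$, uniform over $\mathcal H$ for every $\beta<16\pi^{2}$ by Lemma \ref{best constant}, to the boundary by means of the elementary trace identity coming from the divergence theorem; for $\alpha>12\pi^{2}$ I would re-examine the concentrating family already built in the proof of Lemma \ref{best constant} and evaluate its boundary integral.

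So first I would fix a field $X\in C^{1}(\overline{\Omega};\mathbb{R}^{4})$ with $X\cdot\nu\equiv 1$ on $\partial\Omega$, which exists since $\partial\Omega$ is smooth, and record that on $\mathcal H$ one has $\|u\|_{W^{2,2}(\Omega)}\le C$ (elliptic regularity for the Neumann Laplacian on mean-zero functions), hence $\|\nabla u\|_{L^{4}(\Omega)}\le C$ by the four-dimensional embedding $W^{2,2}(\Omega)\hookrightarrow W^{1,4}(\Omega)$. For $u\in\mathcal H$ and $\alpha<12\pi^{2}$ the function $f:=e^{\alpha u^{2}}$ lies in $W^{1,1}(\Omega)$: since $\nabla f=2\alpha u\,e^{\alpha u^{2}}\nabla u$, H\"older's inequality with exponents $\tfrac43$ and $4$ gives
\[
\int_{\Omega}|\nabla f|\,dx\le 2\alpha\Big(\int_{\Omega}|u|^{4/3}e^{\frac{4}{3}\alpha u^{2}}\,dx\Big)^{3/4}\|\nabla u\|_{L^{4}(\Omega)},
\]
and the right-hand integral is finite --- indeed bounded uniformly over $\mathcal H$ by Lemma \ref{best constant}, since $|u|^{4/3}e^{\frac{4}{3}\alpha u^{2}}\le C_{\varepsilon}\,e^{(\frac{4}{3}\alpha+\varepsilon)u^{2}}$ with $\tfrac{4}{3}\alpha<16\pi^{2}$. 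In particular the divergence theorem applies to the $W^{1,1}$ field $fX$, giving
\[
\int_{\partial\Omega}e^{\alpha u^{2}}\,d\sigma=\int_{\Omega}e^{\alpha u^{2}}\,\mathrm{div}\,X\,dx+2\alpha\int_{\Omega}u\,e^{\alpha u^{2}}\,\nabla u\cdot X\,dx,
\]
and then bounding $|\mathrm{div}\,X|,|X|\le C$ and reusing the two estimates just made (both valid exactly because $\tfrac{4}{3}\alpha<16\pi^{2}$, i.e. $\alpha<12\pi^{2}$) gives $I_{\alpha}(u)\le C$ uniformly over $\mathcal H$.

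For $\alpha>12\pi^{2}$ I would take the normalized functions $\tilde U_{k}\in\mathcal H$ from the proof of Lemma \ref{best constant}, which concentrate at the point $p\in\partial\Omega$. On $B_{\rho\sqrt[4]{R_{k}}}(p)\cap\overline{\Omega}$ the explicit formula for $u_{k}$ gives $u_{k}\ge\sqrt{\tfrac{1}{16\pi^{2}}\log\tfrac{1}{R_{k}}}$, and since $\|\Delta U_{k}\|_{2}=1+o(1)$ and the correction $\varphi_{k}$ is of lower order, the parameters $\delta,\rho,r$ can be arranged so that for any prescribed $\varepsilon>0$ one has $\tilde U_{k}^{2}\ge\tfrac{1-\varepsilon}{16\pi^{2}}\log\tfrac{1}{R_{k}}$ there for all large $k$. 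As $\partial\Omega$ is a smooth hypersurface through $p$, $\sigma\big(\partial\Omega\cap B_{\rho\sqrt[4]{R_{k}}}(p)\big)\ge c\,\rho^{3}R_{k}^{3/4}$ for $k$ large, so
\[
\int_{\partial\Omega}e^{\alpha\tilde U_{k}^{2}}\,d\sigma\ge c\,\rho^{3}\,R_{k}^{\,3/4-\frac{(1-\varepsilon)\alpha}{16\pi^{2}}}.
\]
Given $\alpha>12\pi^{2}$, choose $\varepsilon$ so small that $\tfrac{(1-\varepsilon)\alpha}{16\pi^{2}}>\tfrac34$; then the exponent is negative and the right-hand side tends to $+\infty$ as $R_{k}\searrow 0$, so $\sup_{\mathcal H}I_{\alpha}=+\infty$.

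The delicate point is entirely in the subcritical step, specifically the legitimacy of the divergence-theorem identity: it is the integrability of $\nabla(e^{\alpha u^{2}})=2\alpha u\,e^{\alpha u^{2}}\nabla u$ that forces $\alpha<12\pi^{2}$, because the $L^{4/3}$--$L^{4}$ H\"older pairing (with $\nabla u\in L^{4}$, the borderline Sobolev exponent of $W^{2,2}$ in dimension four) requires $\tfrac{4}{3}\alpha<16\pi^{2}$. This is precisely the mechanism by which the ratio $\tfrac34=\tfrac{\dim\partial\Omega}{\dim\Omega}$ converts the sharp volume constant $16\pi^{2}$ into the sharp trace constant $12\pi^{2}$; the supercritical direction, by contrast, is just a recomputation of the already-available concentrating family on $\partial\Omega$.
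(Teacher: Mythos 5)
Your argument is correct and follows essentially the same route as the paper: for $\alpha<12\pi^2$ both proofs extend the outward unit normal to a vector field on $\overline\Omega$, apply the divergence theorem to $e^{\alpha u^2}X$, bound the resulting volume integral by H\"older using $\nabla u\in L^4$ together with the subcritical mean-zero Adams inequality (Lemma~\ref{best constant}), and observe that the exponent constraint is exactly $\alpha<12\pi^2$; for $\alpha>12\pi^2$ both re-evaluate the concentrating sequence $\tilde U_k$ from Lemma~\ref{best constant} on $\partial\Omega$. The only cosmetic difference is that the paper uses a three-factor H\"older split $(\nabla u,\,u,\,e^{\alpha u^2})\in L^4\times L^p\times L^{q}$ while you use a two-factor split $(\nabla u,\,|u|e^{\alpha u^2})\in L^4\times L^{4/3}$ and absorb the polynomial into the exponential; both devices exploit the same margin $\tfrac43\alpha<16\pi^2\iff\alpha<12\pi^2$, and your version of the supercritical side is spelled out in more detail than the paper's.
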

\begin{proof}
Taking a smooth vector field $\vec{\nu}(x)$ whose restriction on $\partial\Omega$ is the outward unit normal vector field. Using the divergence theorem and Sobolev embedding theorem, we derive that for any $\varepsilon>0$, there holds
\begin{align}
  \int_{\partial \Omega} e^{(12\pi^2-\varepsilon) u^{2}}d\sigma &=\int_{\Omega} \operatorname{div}\left(\vec{\nu}(x) e^{(12\pi^2-\varepsilon) u^{2}}\right) dx \nonumber\\&=\int_{\Omega}\left(\operatorname{div}(\vec{\nu}(x))+2(12\pi^2-\varepsilon) u\langle\vec{\nu}(x), \nabla u\rangle\right) e^{(12\pi^2-\varepsilon) u^{2}}dx \nonumber \\
& \leq c\left(1+\int_{\Omega}|\nabla u \| u| e^{(12\pi^2-\varepsilon) u^{2}}\right)dx  \nonumber\\
& \leq c\left(1+\|\nabla u\|_{L^{4}(\Omega)}\|u\|_{L^{p}(\Omega)}\left\|e^{(12\pi^2-\varepsilon) u^{2}}\right\|_{L^{\frac{16 \pi^2-\varepsilon} {12\pi^2-\varepsilon}}(\Omega)}\right) \nonumber\\
& \leq c+c\left(\|\Delta u\|^2_{L^{2}(\Omega)}+\| u\|^2_{L^{2}(\Omega)}\right)^{1/2}\|u\|_{L^{p}(\Omega)}\left\|e^{(12\pi^2-\varepsilon) u^{2}}\right\|_{L^{\frac{16 \pi^2-\varepsilon} {12\pi^2-\varepsilon}}(\Omega)},\label{com}
\end{align}
where $1 / p+1 / 4+(12\pi^2-\varepsilon) /(16 \pi^2-\varepsilon)=1$. This together with Lemma \ref{best constant} yields $$\sup _{u \in \mathcal{H}\backslash\{0\}} I_{12\pi^2-\varepsilon}(u)<+\infty,$$ for any $\varepsilon>0$. Using the test function $\tilde{U}_k$ constructed in Lemma \ref{best constant} again, one can easily check that for any $\alpha>12\pi^2, I_{\alpha}\left(\tilde{U}_k\right)\rightarrow +\infty$ as $k \rightarrow \infty$.
\end{proof}

Let $\alpha_k$ be an increasing sequence converging to $12\pi^2$, then by the weak compactness of the Banach space $L^{12\pi^2/\alpha_k}$, there exists an exremal function $u_k\in\mathcal{H}\setminus\{0\}$ such that

 \begin{equation*}
\int_{\partial\Omega}\exp(\alpha_k|u_k|^{2})d\sigma=\sup_{u\in\mathcal{H}\setminus\{0\}}\int_{\partial\Omega}\exp(\alpha_k|u|^{2})d\sigma.
\end{equation*}
Furthermore, we can show that the extremal function $u_k\in\mathcal{H}$ is smooth. For this, we first recall the following elliptic regularity result.
\begin{lemma}[\cite{Troianiello},Theorem 3.17]\label{regu}
Suppose that $f\in L^{p}(\Omega)$ and $h\in W^{1,p}(\Omega)$ for some $p\geq2$. Let $u\in W^{1,2}(\Omega)$ be a solution of
\begin{equation*}
\begin{cases}\Delta u=f, & \text { in }\  \Omega, \\ \frac{\partial u}{\partial \nu}=h, & \text { on } \partial \Omega.\\
\end{cases}
\end{equation*}
Then $u\in W^{2,p}(\Omega)$.
\end{lemma}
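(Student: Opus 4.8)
The plan is to obtain this as a consequence of the $L^p$ theory for the Neumann problem of the Laplacian, via the usual three moves: reduce to homogeneous boundary data, localize and use Calder\'on--Zygmund estimates together with a boundary flattening, and finally pass from the a priori estimate to genuine regularity by approximation. First I would reduce to the case $h\equiv 0$. Since $h\in W^{1,p}(\Omega)$ with $p\ge 2$, its trace lies in $W^{1-1/p,p}(\partial\Omega)$, so one can pick $w\in W^{2,p}(\Omega)$ with $\frac{\partial w}{\partial\nu}=h$ on $\partial\Omega$ (for instance by solving $\Delta w=0$ with this Neumann datum through the trace theorem, or by an explicit extension supported near $\partial\Omega$). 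Setting $v=u-w$, we have $v\in W^{1,2}(\Omega)$, $\Delta v=f-\Delta w=:g\in L^p(\Omega)$, and $\frac{\partial v}{\partial\nu}=0$ on $\partial\Omega$; it therefore suffices to treat the homogeneous Neumann problem for $v$.

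Next I would establish the a priori bound $\|v\|_{W^{2,p}(\Omega)}\le C\big(\|g\|_{L^p(\Omega)}+\|v\|_{L^p(\Omega)}\big)$ by localization. Choose a finite partition of unity $\{\zeta_i\}$ subordinate to a cover of $\bar\Omega$ by interior balls and boundary coordinate patches. On an interior patch, $\zeta_i v$ has compact support in $\mathbb{R}^4$ and satisfies $\Delta(\zeta_i v)=\zeta_i g+2\nabla\zeta_i\cdot\nabla v+v\,\Delta\zeta_i$, so the classical Calder\'on--Zygmund estimate for the Newtonian potential (boundedness of the second-order Riesz transforms on $L^p$) gives $\zeta_i v\in W^{2,p}$ once the right-hand side is in $L^p$; the term $\nabla v$ is absorbed by a bootstrap that starts from the energy estimate $v\in W^{1,2}$ and iterates the Sobolev gain until the exponent $p$ is reached (here $p\ge 2$ is what makes this iteration terminate at $p$). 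On a boundary patch I would flatten $\partial\Omega$ with a $C^\infty$ diffeomorphism: the Laplacian becomes a variable-coefficient elliptic operator and the Neumann condition becomes an oblique-derivative condition satisfying the Lopatinski--Shapiro (complementing) condition. Freezing coefficients at a boundary point reduces matters to the model problem $\Delta V=G$ on a half-ball with $\partial_{x_4}V=0$ on the flat part; even reflection across $\{x_4=0\}$ yields $\Delta\widetilde V=\widetilde G$ on a full ball with $\widetilde G$ the even reflection of $G\in L^p$, so the interior estimate applies again. Summing over $i$ and controlling the frozen-coefficient errors by choosing the patches small gives the claimed a priori inequality.

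Finally I would promote the a priori estimate to the assertion $v\in W^{2,p}(\Omega)$ by approximation. Take $g_j\in C^\infty(\bar\Omega)$ with $g_j\to g$ in $L^p(\Omega)$, adjusted by constants so that $\int_\Omega g_j=0$, solve the smooth homogeneous Neumann problems $\Delta v_j=g_j$, $\frac{\partial v_j}{\partial\nu}=0$ with $\int_\Omega v_j=\int_\Omega v$ (these solutions are classically smooth), and apply the a priori estimate to $v_j-v_\ell$ together with the Poincar\'e inequality to see that $\{v_j\}$ is Cauchy in $W^{2,p}(\Omega)$. Its limit solves the same problem as $v$; by uniqueness up to additive constants, which are pinned down by the integral normalization, it equals $v$, so $v\in W^{2,p}(\Omega)$ and hence $u=v+w\in W^{2,p}(\Omega)$.

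I expect the delicate point to be the boundary analysis: verifying that after flattening the oblique-derivative condition still obeys the complementing condition so that the half-space $L^p$ estimate is available, and carefully tracking the lower-order terms generated by the coordinate change and the cutoffs so that the bootstrap from $W^{1,2}$ up to $W^{2,p}$ actually closes. The interior Calder\'on--Zygmund step, the reduction to $h\equiv 0$, and the approximation argument are routine by comparison.
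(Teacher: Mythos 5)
The paper does not prove this lemma; it is quoted verbatim as Theorem~3.17 of Troianiello's book, so there is no internal proof to compare against. Your sketch is a correct and standard route to that theorem (reduce to $h\equiv 0$ via a $W^{2,p}$ extension of the Neumann datum, localize, use Calder\'on--Zygmund interior estimates and boundary flattening plus even reflection for the model half-space Neumann problem, bootstrap from $W^{1,2}$ to $W^{2,p}$ to absorb the lower-order cutoff terms, then pass from the a priori inequality to genuine regularity by approximating the data and invoking uniqueness up to constants), and it matches the proof Troianiello gives modulo bookkeeping. Two small points worth tightening if you were to write it out: in the reduction step it is cleaner to invoke the right inverse of the trace pair $(w|_{\partial\Omega},\partial_\nu w|_{\partial\Omega})$ rather than to ``solve $\Delta w=0$'' (which needs a compatibility condition $h$ may not satisfy); and in the frozen-coefficient boundary step one should first diagonalize the frozen coefficient matrix by a linear change of variables so that the reflected problem really is $\Delta\widetilde V=\widetilde G$ with the flat Neumann condition.
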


\begin{lemma}\label{regular}
For any $\alpha_k<12\pi^2$, the functional $I_{\alpha_k}(u)$ defined in $\mathcal{H}$ admits a smooth maximizer.
\end{lemma}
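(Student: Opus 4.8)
The plan is to obtain the extremal $u_k$ as a maximizer of the subcritical functional $I_{\alpha_k}$ over $\mathcal{H}$, then derive its Euler--Lagrange equation, and finally bootstrap regularity via Lemma~\ref{regu}. First I would note that $\mathcal{H}$ is weakly closed in $W^{2,2}(\Omega)$: the constraint $\|\Delta u\|_2\le 1$ is convex and strongly closed, hence weakly closed; the linear constraints $\int_\Omega u\,dx=0$ and $\frac{\partial u}{\partial\nu}|_{\partial\Omega}=0$ (the latter interpreted via the bounded trace operator $W^{2,2}(\Omega)\to W^{1/2,2}(\partial\Omega)$ on normal derivatives) are weakly continuous. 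Since $\alpha_k<12\pi^2$, pick $\varepsilon>0$ with $\alpha_k(1+\varepsilon)^2<12\pi^2$; given a maximizing sequence $\{v_j\}\subset\mathcal{H}$ for $I_{\alpha_k}$, after passing to a subsequence $v_j\rightharpoonup u_k$ weakly in $W^{2,2}$, strongly in $L^2$, and the traces converge strongly in $L^q(\partial\Omega)$ for every $q<\infty$ by the compactness of the trace embedding $W^{2,2}(\Omega)\hookrightarrow\hookrightarrow W^{1,2}(\partial\Omega)\hookrightarrow L^q(\partial\Omega)$. Combining the uniform bound $\sup_{v\in\mathcal{H}}\int_{\partial\Omega}e^{\alpha_k(1+\varepsilon) v^2}d\sigma<\infty$ from the $\alpha<12\pi^2$ case of the previous lemma with Vitali's convergence theorem shows $\int_{\partial\Omega}e^{\alpha_k v_j^2}d\sigma\to\int_{\partial\Omega}e^{\alpha_k u_k^2}d\sigma$, so $u_k$ attains the supremum; since $I_{\alpha_k}$ is not maximized at $0$, $u_k\neq 0$.

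Next I would compute the Euler--Lagrange equation. Since $u_k$ maximizes $I_{\alpha_k}$ under the constraints, Lagrange multipliers give, for all $\phi\in W^{2,2}(\Omega)$ with $\frac{\partial\phi}{\partial\nu}|_{\partial\Omega}=0$,
\begin{equation*}
\lambda_k\int_\Omega \Delta u_k\,\Delta\phi\,dx
=\alpha_k\int_{\partial\Omega} u_k\,\phi\, e^{\alpha_k u_k^2}d\sigma
+\mu_k\int_\Omega\phi\,dx
\end{equation*}
for some $\lambda_k>0$ and $\mu_k\in\mathbb{R}$; testing with $\phi\equiv 1$ pins down $\mu_k=-\frac{\alpha_k}{|\Omega|}\int_{\partial\Omega}u_k e^{\alpha_k u_k^2}d\sigma$. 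Thus $u_k$ is a weak solution of the biharmonic Neumann-type problem
\begin{equation*}
\begin{cases}
\lambda_k\Delta^2 u_k=\mu_k, & \text{in }\Omega,\\
\dfrac{\partial u_k}{\partial\nu}=0, & \text{on }\partial\Omega,\\
\lambda_k\dfrac{\partial\Delta u_k}{\partial\nu}=-\alpha_k u_k e^{\alpha_k u_k^2}, & \text{on }\partial\Omega.
\end{cases}
\end{equation*}

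Then the regularity bootstrap proceeds in two layers. Set $w_k=\Delta u_k\in L^2(\Omega)$; from the interior equation $\lambda_k\Delta w_k=\mu_k$ (a constant), with Neumann-type boundary data $\lambda_k\frac{\partial w_k}{\partial\nu}=-\alpha_k u_k e^{\alpha_k u_k^2}$ on $\partial\Omega$. The boundary datum is the key point: since $u_k\in W^{2,2}(\Omega)\hookrightarrow L^q(\partial\Omega)$ for all $q<\infty$ and, crucially, $e^{\alpha_k u_k^2}\in L^r(\partial\Omega)$ for all $r<\infty$ by the subcritical inequality $\alpha_k<12\pi^2$ (using $\alpha_k(1+\varepsilon)^2<12\pi^2$), Hölder gives $u_k e^{\alpha_k u_k^2}\in L^s(\partial\Omega)$ for all $s<\infty$, hence certainly in $W^{1-1/p',p'}(\partial\Omega)$-type trace spaces for large $p$. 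I would first apply Lemma~\ref{regu} with $f=\mu_k/\lambda_k$ (constant, so in every $L^p$) to the pair $(w_k,\text{its boundary trace})$; but Lemma~\ref{regu} as stated needs the Neumann datum $h\in W^{1,p}(\Omega)$, so one works instead with $u_k$ directly: $\Delta u_k=w_k$ with $\frac{\partial u_k}{\partial\nu}=0\in W^{1,p}$ for every $p$, giving $u_k\in W^{2,p}(\Omega)$ once $w_k\in L^p$; and to improve $w_k$ one extends the boundary datum $-\frac{\alpha_k}{\lambda_k}u_k e^{\alpha_k u_k^2}$ to an $H\in W^{1,p}(\Omega)$ and invokes Lemma~\ref{regu} for $w_k$. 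Iterating: $u_k\in W^{2,p}\Rightarrow u_k\in C^{1,\beta}(\bar\Omega)$ and its trace and the exponential are smooth enough that $w_k\in W^{2,p}$, hence $u_k\in W^{4,p}$ for all $p$, so $u_k\in C^{3,\beta}(\bar\Omega)$; standard Schauder estimates for $\Delta^2$ with smooth (here constant) right-hand side and smooth boundary then upgrade to $u_k\in C^\infty(\bar\Omega)$.

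The main obstacle I expect is the careful handling of the \emph{boundary} nonlinearity in the regularity step: unlike the interior Adams case, the exponential term lives only on $\partial\Omega$, so one must first argue $e^{\alpha_k u_k^2}\in L^r(\partial\Omega)$ for all $r$ (which genuinely uses the strict subcriticality $\alpha_k<12\pi^2$ together with the sharp trace inequality) and then feed this into an elliptic estimate whose stated form (Lemma~\ref{regu}) takes the Neumann datum as the trace of a $W^{1,p}(\Omega)$ function rather than as a boundary-space datum directly. Bridging that gap — producing a suitable $W^{1,p}(\Omega)$ extension of $u_k e^{\alpha_k u_k^2}|_{\partial\Omega}$ at each stage of the iteration, and checking the exponents close up so that the bootstrap actually terminates in $C^\infty$ — is the technical heart of the argument; everything else (weak closedness of $\mathcal{H}$, Vitali/compact-trace for attainment, the Lagrange multiplier computation) is routine.
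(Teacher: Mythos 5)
Your overall scaffold matches the paper's: direct method for existence of a maximizer, derivation of the Euler--Lagrange equation, and a two-layer bootstrap for regularity splitting $\Delta^2 u_k = \gamma_k$ into the coupled system $\Delta u_k = v_k$, $\Delta v_k = \gamma_k$ and invoking Lemma~\ref{regu} twice. You also correctly identify the crux: Lemma~\ref{regu} requires the Neumann datum to be the trace of a $W^{1,p}(\Omega)$ function. But you leave exactly that step open, calling it ``the technical heart'' and suggesting one should estimate $u_k e^{\alpha_k u_k^2}$ on $\partial\Omega$ and then manufacture a $W^{1,p}(\Omega)$ extension. That is the wrong direction, and no extension is needed: the Neumann datum $h_k = u_k e^{\alpha_k u_k^2}/\lambda_k$ on $\partial\Omega$ is already the trace of the \emph{interior} function $h_k = u_k e^{\alpha_k u_k^2}/\lambda_k$ defined on all of $\Omega$, and one estimates that interior function directly. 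The missing ingredient is the interior Orlicz embedding (Lemma 3.4 of \cite{Hang1}): for a fixed $u_k \in W^{2,2}(\Omega)$ with $\|\Delta u_k\|_2 \le 1$ one has $\exp(u_k^2) \in L^p(\Omega)$ for all $p > 1$. Combined with $\nabla u_k \in L^4(\Omega)$ and the product rule, this gives $h_k \in W^{1,q}(\Omega)$ for any $1 < q < 2$, which is precisely what Lemma~\ref{regu} needs to launch the bootstrap: $v_k \in W^{2,q} \hookrightarrow L^{4q/(4-2q)}$, then $u_k \in W^{2,4q/(4-2q)} \hookrightarrow L^\infty$, after which the iteration closes in the standard way.

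A secondary issue: your parenthetical ``$e^{\alpha_k u_k^2} \in L^r(\partial\Omega)$ for all $r < \infty$ ... using $\alpha_k(1+\varepsilon)^2 < 12\pi^2$'' is too weak as written. A single $\varepsilon$ only buys you $r \le (1+\varepsilon)^2$; to get every $r$ for the \emph{fixed} function $u_k$ you again need the Orlicz-type fact (approximate $u_k$ by a smooth function $\varphi$ with $\partial_\nu\varphi = 0$ so that $\|\Delta(u_k - \varphi)\|_2$ is as small as you like, then split). Once the interior Orlicz estimate is in hand, however, all the boundary-space bookkeeping you worry about becomes unnecessary, which is why the paper's route is shorter.
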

\begin{proof}
Obviously, there exists $u_k\in\mathcal{H}$ such that
$$I_{\alpha_k}(u_k)=\sup _{u \in \mathcal{H}\setminus\{0\}}I_{\alpha_k}(u).$$
Hence $u_k$ satisfies the Euler-Lagrange equation
\begin{equation}\label{fen}
\begin{cases}
 \Delta^2 u_k=\gamma_k, & \forall x\in \Omega, \\
    \frac{\partial\Delta u_k}{\partial \nu}=\frac{u_k\exp(\alpha_k u^2_k)}{\lambda_k}, & \forall x\in \partial\Omega, \\
    \int_{\Omega}|\Delta u_k|^{2}dx=1,\int_{\Omega}u_kdx=0,\frac{\partial u_k}{\partial \nu}=0, &\forall x\in \partial\Omega,
\end{cases}
\end{equation}
where
\begin{equation}\label{sign}
 \lambda_k=-\int_{\partial\Omega}u^2_k\exp(\alpha_k u^2_k)d\sigma,\gamma_k=\int_{\partial\Omega}\frac{u_k\exp(\alpha_k u^2_k)}{\lambda_k|\Omega|}d\sigma.
\end{equation}
By the Orlicz imbedding (see Lemma 3.4 in \cite{Hang1}), we obtain $\exp(u_k^2)\in L^p(\Omega)$ for any $p>1$. Therefore, $\frac{u_k\exp(\alpha_k u^2_k)}{\lambda_k}\in W^{1,q}(\Omega)$ for any $1<q<2$.  We claim that $u_k\in L^{\infty}(\Omega)$. Indeed, we can rewrite (\ref{fen}) as the following systems
\begin{equation}\label{fen1}
\begin{cases}\Delta u_k=v_k, & \text { in } {\Omega}, \\ \frac{\partial u_k}{\partial \nu} =0, & \text { on } \partial \Omega,\\
\end{cases}
\end{equation}
and \begin{equation}\label{fen2}
\begin{cases}\Delta v_k=\gamma_k, & \text { in } {\Omega}, \\ \frac{\partial v_k}{\partial \nu} =h_k, & \text { on } \partial \Omega,\\
\end{cases}
\end{equation}
where $h_k=\frac{u_k\exp(\alpha_k u^2_k)}{\lambda_k}$.  Applying Lemma \ref{regu} for \eqref{fen2}, we know $v_k\in W^{2,q}(\Omega)$. By the Sobolev Embedding theorem, we get $v_k\in L^{\frac{4q}{4-2q}}(\Omega)$. Using Lemma \ref{regu} again for (\ref{fen1}), we derive that $u_k\in W^{2,\frac{4q}{4-2q}}(\Omega)$. Since $q>1$, we can immediately obtain the claim by the Sobolev embedding theorem.
\vskip0.1cm

From the boundedness of $u_k$, we know that $h_k\in W^{1,2}(\Omega)$. Thus we have $h_k\in W^{2,2}(\Omega)$ from Lemma \ref{regu}, hence $h_k\in W^{1,p}(\Omega)$ for some $p>2$. By Lemma \ref{regu} again we have $u_k\in W^{2,p}(\Omega)$, which implies that $u_k\in C^{1}(\Omega)$. Repeating the above procedure, we can conclude that $u_k\in C^{\infty}(\Omega)$.

\end{proof}

Now, we give the following important observation.

\begin{lemma}\label{lamb-gama}It holds $$-\liminf _{k \rightarrow \infty} \lambda_{k}>0, $$ and $|\gamma_k|<c $ for some $c>0$.
\end{lemma}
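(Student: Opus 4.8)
The plan is to work entirely with the Euler--Lagrange system \eqref{fen}, the identities \eqref{sign}, and the monotonicity of $\alpha\mapsto S(\alpha)$; crucially I will not use finiteness of $S(12\pi^2)$, which is not yet available at this stage of the paper.

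First I would pin down the sign of $\lambda_k$ together with a lower bound for $|\lambda_k|$. Since $u_k$ maximizes $I_{\alpha_k}$ over $\mathcal{H}$, and $\mathcal{H}$ contains functions with nonconstant boundary trace (for instance $v=(\phi-\bar\phi)/\|\Delta\phi\|_2$ for a smooth $\phi$ with $\partial_\nu\phi=0$ on $\partial\Omega$ and $\phi|_{\partial\Omega}$ nonconstant), we have $S(\alpha_k)=I_{\alpha_k}(u_k)>|\partial\Omega|$; hence $u_k\not\equiv 0$ on $\partial\Omega$ and $\lambda_k=-\int_{\partial\Omega}u_k^2e^{\alpha_k u_k^2}\,d\sigma<0$. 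Using the elementary inequality $s\,e^{s}\ge e^{s}-1$ for $s\ge 0$ with $s=\alpha_k u_k^2$ and integrating over $\partial\Omega$ gives
\[
-\lambda_k\ \ge\ \frac{1}{\alpha_k}\bigl(S(\alpha_k)-|\partial\Omega|\bigr)\ \ge\ \frac{1}{12\pi^2}\bigl(S(\alpha_1)-|\partial\Omega|\bigr)=:c_0>0,
\]
where I used $\alpha_k<12\pi^2$ together with $S(\alpha_k)\ge S(\alpha_1)$ (monotonicity, since $u^2\ge 0$). This already yields $-\liminf_k\lambda_k\ge c_0>0$.

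For the bound on $\gamma_k$, from \eqref{sign} and the Cauchy--Schwarz inequality on $\partial\Omega$,
\[
|\gamma_k|\ \le\ \frac{1}{|\lambda_k|\,|\Omega|}\int_{\partial\Omega}|u_k|\,e^{\alpha_k u_k^2}\,d\sigma\ \le\ \frac{1}{|\lambda_k|\,|\Omega|}\Bigl(\int_{\partial\Omega}u_k^2 e^{\alpha_k u_k^2}\,d\sigma\Bigr)^{1/2}\Bigl(\int_{\partial\Omega}e^{\alpha_k u_k^2}\,d\sigma\Bigr)^{1/2}=\frac{1}{|\Omega|}\Bigl(\frac{S(\alpha_k)}{|\lambda_k|}\Bigr)^{1/2},
\]
so it suffices to bound $S(\alpha_k)/|\lambda_k|$ independently of $k$. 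I would do this by splitting $\partial\Omega$ along $\{|u_k|\ge 1\}$ and its complement: on $\{|u_k|\ge 1\}$ one has $e^{\alpha_k u_k^2}\le u_k^2 e^{\alpha_k u_k^2}$, so that part of $S(\alpha_k)$ is at most $\int_{\partial\Omega}u_k^2e^{\alpha_k u_k^2}\,d\sigma=|\lambda_k|$, while on the complement $e^{\alpha_k u_k^2}\le e^{12\pi^2}$, contributing at most $e^{12\pi^2}|\partial\Omega|$. Hence $S(\alpha_k)\le|\lambda_k|+e^{12\pi^2}|\partial\Omega|$, and combined with the bound $|\lambda_k|\ge c_0$ of the previous step this gives $S(\alpha_k)/|\lambda_k|\le 1+e^{12\pi^2}|\partial\Omega|/c_0$, a constant independent of $k$; plugging back yields $|\gamma_k|\le c$.

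There is no serious obstacle here; the one point requiring care is that finiteness of $S(12\pi^2)$ is precisely what the subsequent blow-up analysis is meant to establish, so one may not simply bound $S(\alpha_k)$ by an absolute constant. The remedy is the self-improving split estimate above, which controls the ratio $S(\alpha_k)/|\lambda_k|$ rather than $S(\alpha_k)$ alone; once the lower bound $|\lambda_k|\ge c_0$ is in hand, everything else is elementary.
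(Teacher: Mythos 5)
Your proof is correct and is essentially the paper's argument: the paper also applies the elementary inequality $te^{t}\ge e^{t}-1$ to $t=\alpha_k u_k^2$ to get $S(\alpha_k)\le|\partial\Omega|+12\pi^2(-\lambda_k)$ and combines it with $\lim_k S(\alpha_k)=S(12\pi^2)>|\partial\Omega|$ for the $\lambda_k$ bound, then uses the same Cauchy--Schwarz step for $\gamma_k$. The only difference is that you make explicit the self-contained bound $S(\alpha_k)/|\lambda_k|\le c$ (via the split over $\{|u_k|\ge1\}$), whereas the paper leaves that last ``$\le c$'' implicit; both versions are valid even without knowing $S(12\pi^2)<\infty$.
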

\begin{proof}
By the element inequality $te^t>e^t-1$, we have

\begin{equation}\label{cont}
 |\partial \Omega|<\sup _{u \in\mathcal{H}\setminus\{0\}} \int_{\partial \Omega} e^{12\pi^2 u^{2}}=\lim _{k \rightarrow \infty} \int_{\partial \Omega} e^{\alpha_k u_{k}^{2}} \leq |\partial \Omega|-\liminf_{k \rightarrow \infty} 12\pi^2\lambda_{k}.
\end{equation}
This implies  $-\liminf _{k \rightarrow \infty} \lambda_{k}>0.$
By (\ref{sign}), (\ref{cont}) and  H{\"o}lder's inequality, we derive
 \begin{align*}
   |\gamma_k|  \leq   \frac{-1}{\lambda_k|\Omega|}\left(\int_{\partial\Omega}u^2_k\exp(\alpha_k u^2_k)d\sigma\right)^{1/2}\left(\int_{\partial\Omega}\exp(\alpha_k u^2_k)d\sigma\right)^{1/2}  \leq c,
 \end{align*}
and the proof is finished.
\end{proof}

 Set $c_{k}=\left|u_{k}\left(x_{k}\right)\right|=\max _{x \in \Omega}\left|u_{k}(x)\right|$. If $\left\{c_{k}\right\}$ is bounded, then by the  elliptic estimates with respect to equation (\ref{fen}), there exists $u \in \mathcal{H} \bigcap C^{\infty}(\Omega)$ such that $u_{k} \rightarrow u$ in $C^{\infty}(\Omega)$ as $k \rightarrow \infty$, and Theorem \ref{mainre} follows immediately. In the sequel,  we assume $c_{k} \rightarrow+\infty$ as $k \rightarrow \infty$. Passing to a subsequence, we may assume that $c_k\geq0$ for all $k$, for otherwise we consider $-u_k$ instead of $u_k$.

\section{Blow-up analysis}
In this section, we consider the blow-up case, that is $c_k\rightarrow\infty$  as $k\rightarrow\infty$. Applying the Adams' inequality in \cite{A}, we know that passing to a subsequence, $x_k\rightarrow p$ for some
$p\in \partial\Omega$. Now, we show that the weak limit of  $u_k$  in $W^{2,2}(\Omega)$ is zero. Furthermore, $u_k$ must concentrate around the blow-up point $p$.
\begin{lemma}\label{concen}
If $c_{k} \rightarrow+\infty$, then $u_{k} \rightharpoonup 0$ in $W^{2,2}(\Omega)$ and $u_{k} \rightarrow 0$ in $L^{p}(\Omega)$ for any $1\leq p< \infty$. Moreover, we have \\

(i) $\left|\Delta u_{k}\right|^{2}dx \rightarrow \delta_{p}$ in the sense of measures;\\

(ii) $e^{\alpha_{k} u_{k}^{2}}$ is bounded in $L^{p}\left(\Omega \backslash B_{\delta}\left(p\right)\right)$, for any $p \geq 1, \delta>0$;\\

(iii) $u_{k} \rightarrow 0$ in $C^{3, \gamma}\left(\Omega \backslash B_{\delta}\left(p\right)\right)$, for any $\gamma \in(0,1), \delta>0.$
\end{lemma}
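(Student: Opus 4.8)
The plan is to run an elliptic blow-up analysis of the Euler--Lagrange system (\ref{fen}), using the subcritical (trace) Adams inequalities of this section in place of the role the Moser--Trudinger inequality plays in the first order setting. \emph{Step 1: the weak limit.} Since $\int_\Omega u_k\,dx=0$ and $\|\Delta u_k\|_2=1$, the Poincar\'e inequality together with the $L^2$ estimate for the Neumann Laplacian gives $\|u_k\|_{W^{2,2}(\Omega)}\le C$. Passing to a subsequence, $u_k\rightharpoonup u_0$ weakly in $W^{2,2}(\Omega)$; by Rellich--Kondrachov and compactness of the trace embedding, $u_k\to u_0$ strongly in $W^{1,2}(\Omega)$, in $L^p(\Omega)$ and in $L^p(\partial\Omega)$ for every $p<\infty$, and a.e.

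\emph{Step 2: $u_0\equiv0$, the main obstacle.} Suppose $u_0\not\equiv0$. Since $\Delta u_0\equiv0$ together with $\frac{\partial u_0}{\partial\nu}|_{\partial\Omega}=0$ and $\int_\Omega u_0\,dx=0$ would force $u_0\equiv0$, we have $a:=\|\Delta u_0\|_2^2>0$, and hence, with $w_k:=u_k-u_0$, $\|\Delta w_k\|_2^2\to1-a<1$, while $w_k$ still has zero mean and zero normal derivative on $\partial\Omega$. The key input is an improved integrability estimate of Lions type: because $\limsup_k\|\Delta w_k\|_2^2<1$, a concentration--compactness argument applying the subcritical trace Adams inequality to $w_k/\|\Delta w_k\|_2\in\mathcal H$, together with the Orlicz bound for $u_0$, yields some $s>1$ with $\sup_k\int_{\partial\Omega}e^{s\alpha_k u_k^2}\,d\sigma<\infty$. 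With this at hand one bootstraps (\ref{fen}): by Lemma \ref{lamb-gama} the Neumann datum $h_k=\lambda_k^{-1}u_k e^{\alpha_k u_k^2}$ obeys $|h_k|\le C|u_k|e^{\alpha_k u_k^2}$, so $h_k$ is bounded in $L^{s'}(\partial\Omega)$ for some $s'>1$; feeding this into the decoupled system (\ref{fen1})--(\ref{fen2}) and iterating Lemma \ref{regu} with the Sobolev embeddings in $\Omega$ and on $\partial\Omega$ forces $u_k$ into $C^0(\overline\Omega)$ uniformly, contradicting $c_k\to+\infty$. Hence $u_0\equiv0$, which gives the first two assertions of the lemma. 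This step is where I expect the real work: unlike the first order trace Moser--Trudinger problem, $\Delta^2$ admits no maximum principle, so (\ref{fen}) must be treated as a coupled Neumann system, and carrying out the Lions-type estimate and the ensuing bootstrap in this setting is delicate; in particular the regularity iteration has to be arranged carefully because of the critical Sobolev exponent in dimension four.

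\emph{Step 3: part (i).} Let $\mu$ be the weak-$*$ limit, along a further subsequence, of the probability measures $|\Delta u_k|^2\,dx$ on $\overline\Omega$. If $\mu(\{p\})<1$ there is $\rho>0$ with $\mu(\overline{B_{2\rho}(p)}\cap\overline\Omega)=:\nu<1$. Choose $\zeta\in C_c^\infty(B_{2\rho}(p))$ with $\zeta\equiv1$ on $B_\rho(p)$ and $\frac{\partial\zeta}{\partial\nu}=0$ on $\partial\Omega$ (possible since $\partial\Omega$ is smooth), and set $\phi_k=\zeta u_k-\frac{1}{|\Omega|}\int_\Omega\zeta u_k\,dx$, so that $\phi_k/\|\Delta\phi_k\|_2\in\mathcal H$ after normalization. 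Because $u_k\to0$ in $W^{1,2}(\Omega)$, the lower order terms in $\Delta(\zeta u_k)$ are negligible, so $\limsup_k\|\Delta\phi_k\|_2^2\le\nu<1$, and the subcritical trace Adams inequality applied to $\phi_k/\|\Delta\phi_k\|_2$ produces $\gamma>12\pi^2$ with $\sup_k\int_{\partial\Omega}e^{\gamma\phi_k^2}\,d\sigma<\infty$, hence $e^{\alpha_k u_k^2}$ is bounded in $L^s(B_\rho(p)\cap\partial\Omega)$ for some $s>1$. The localized bootstrap of Step 2 then gives $u_k$ bounded in $L^\infty(B_{\rho/2}(p)\cap\overline\Omega)$, which is impossible since $x_k\to p$ and $c_k=|u_k(x_k)|\to+\infty$. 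Therefore $\mu(\{p\})=1$, and since $\mu(\overline\Omega)=1$ we conclude $\mu=\delta_p$, which is (i).

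\emph{Step 4: parts (ii) and (iii).} The same localization away from $p$ is easier. Given $\delta>0$, cover $\overline\Omega\setminus B_\delta(p)$ by finitely many balls $B_\rho(q)$ whose closures $\overline{B_{2\rho}(q)}$ avoid $p$; then $\mu(\overline{B_{2\rho}(q)}\cap\overline\Omega)=0$ by (i), so the corresponding $\|\Delta\phi_k\|_2\to0$ and Lemma \ref{subcritical} yields $\sup_k\int_\Omega e^{M\phi_k^2}\,dx<\infty$ for every $M$; consequently $e^{\alpha_k u_k^2}$ is bounded in $L^p(B_\rho(q)\cap\Omega)$ and, via the subcritical trace inequality, in $L^p(B_\rho(q)\cap\partial\Omega)$, for every $p<\infty$. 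Covering gives (ii). For (iii), Lemma \ref{lamb-gama} then makes $h_k$ bounded in $L^p_{\mathrm{loc}}(\partial\Omega\setminus B_\delta(p))$ for all $p$; one application of Lemma \ref{regu} to (\ref{fen1})--(\ref{fen2}) together with Sobolev embedding gives $u_k$ bounded in $L^\infty_{\mathrm{loc}}(\overline\Omega\setminus B_\delta(p))$, whence $e^{\alpha_k u_k^2}$ is locally bounded and $h_k$ is bounded in $W^{1,2}_{\mathrm{loc}}(\partial\Omega\setminus B_\delta(p))$; iterating Lemma \ref{regu} and then Schauder estimates bootstraps $u_k$ to a bound in $C^{3,\gamma}_{\mathrm{loc}}(\overline\Omega\setminus B_\delta(p))$. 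Since $u_k\rightharpoonup0$ in $W^{2,2}(\Omega)$, Arzel\`a--Ascoli then forces $u_k\to0$ in $C^{3,\gamma}(\Omega\setminus B_\delta(p))$, which is (iii).
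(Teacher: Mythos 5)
Your proposal is correct and follows the same overall strategy as the paper (Lions-type concentration compactness to upgrade integrability when the weak limit is nontrivial, followed by a bootstrap through the decoupled Neumann system \eqref{fen1}--\eqref{fen2} using Lemma \ref{regu}), but the route you take in Step 2 differs from the paper's in a way worth noting. The paper rules out $u_0\neq0$ by invoking Hang's Proposition 3.2 as a black box for the \emph{interior} Adams inequality (getting $e^{16\pi^2 u_k^2}$ bounded in $L^p(\Omega)$ for some $p>1$) and then transporting this gain to the boundary via the divergence-theorem estimate already set up in \eqref{com}; you instead subtract $u_0$ by hand, normalize $w_k=u_k-u_0$, and apply the subcritical trace inequality of Lemma \ref{subcritical} directly on $\partial\Omega$. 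Your route is more self-contained but quietly relies on the exponential trace integrability $e^{C u_0^2}\in L^1(\partial\Omega)$ for all $C$ of the fixed limit $u_0$; since $u_0|_{\partial\Omega}$ lives only in the critical space $H^{3/2}(\partial\Omega)$ this is not free, though it does hold by a density argument in $H^{3/2}(\partial\Omega)$ (or, more in the paper's spirit, by first noting $e^{Cu_0^2}\in L^1(\Omega)$ and transferring to $\partial\Omega$ via \eqref{com}). If you flesh out Step 2, you should make this point explicit. In Step 3 you argue with the weak-$*$ limit measure $\mu$ and derive the contradiction by localizing near $p$ with a cutoff $\zeta$ satisfying $\partial_\nu\zeta=0$, while the paper works with the concentration set $A=\{q:\lim_r\liminf_k\int_{B_r(q)}|\Delta u_k|^2>0\}$ and a covering argument followed by a Vitali convergence contradiction; these are the same content in different packaging, and your cutoff construction is a legitimate alternative. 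Steps 1 and 4 match the paper (which is terse on (ii) and (iii)), and your explicit bootstrap for (iii) is exactly what the paper means by "standard regularity".
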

\begin{proof}Since $u_{k}$ is bounded in $W^{2,2}(\Omega)$, we assume that $u_{k} \rightharpoonup u_{0}$ in $W^{2,2}(\Omega)$ with some  $u_{0} \in W^{2,2}(\Omega)$. The compactness of the embedding of $W^{2,2}(\Omega)$ into $L^{p}(\Omega)$ implies $u_{k} \rightarrow u_{0}$ in $L^{p}(\Omega)$ for any $p \geq 1$. If $u_{0} \neq 0$, then by the concentration compactness principle (see Proposition 3.2 of \cite{Hang1}),   $ e^{16\pi^2 u_{k}^{2}}$ is bounded in $L^{p}(\Omega)$ for some $p>1$. Similar as (\ref{com}), we can find some $\varepsilon_0>0$ such that $ e^{12\pi^2 u_{k}^{2}}$ is bounded in $L^{1+\varepsilon_0}(\partial\Omega)$,  hence $\frac{\partial\Delta u_{k}}{\partial\nu}$ is bounded in $L^{1+\varepsilon_0}(\partial\Omega)$. Using the same argument in Lemma \ref{regular}, we get $u_{k}\in L^{\infty}(\Omega)$. This contradicts with $c_{k} \rightarrow+\infty$. Hence, we have $u_{0}=0$.
\vskip0.1cm

Now, we show that $u_k$ must concentrate around the blow-up point $p$. Let \begin{equation*}
A=\left\{q \in \Omega: \lim _{r \rightarrow 0} \liminf _{k \rightarrow \infty} \int_{B_{r}(q)}\left|\Delta u_{k}\right|^{2}dx>0\right\}.
\end{equation*}
We claim that $A$ contains only one point. Suppose that the claim does not hold. Then, for any $q \in \Omega$, we have $\lim\limits _{r \rightarrow 0} \liminf _{k \rightarrow \infty} \int_{B_{r}(q)}\left|\Delta u_{k}\right|^{2}dx<1$. Then there exist positive numbers $r$ and $\delta$ such that
$$
\int_{B_{r}(q)}\left|\Delta u_{k}\right|^{2}dx \leq \delta(q)<1.
$$
Using the same argument as that in (\ref{com}) again, we see that there exists a constant $\alpha(q)>12\pi^2$ such that
$$
\int_{\partial \Omega \bigcap B_{r}(q)} e^{\alpha(q) u_{k}^{2}}d\sigma \leq C_{q},
$$
for some constant $C_{q}$ depending on $q$. Hence, there exists an $\alpha>12\pi^2$ such that
$$
\int_{\partial \Omega} e^{\alpha u_{k}^{2}}d\sigma \leq C,
$$
by using the covering argument. Therefore, it follows from the Vitali convergence lemma that
$$
\lim_k\int_{\partial \Omega} e^{\alpha_k u_{k}^{2}}d\sigma=|\partial \Omega|,
$$
 which is impossible by the  choice of $u_k$.
Next we show that $A=\{p\}$ and $$\lim _{r \rightarrow 0} \liminf _{k \rightarrow \infty} \int_{B_{r}(p)}\left|\Delta u_{k}\right|^{2}=1.$$ Suppose not, repeating the argument above, we can obtain $u_{k}\in L^{\infty}(B_{\delta}(p))$ for some $\delta>0$, which contradicts with $c_{k} \rightarrow+\infty$, and the statement (i) is proved.
\vskip0.1cm

The statement (ii) follows from (i) and Lemma \ref{subcritical}, and the statement (iii) can be proved by the standard regularity argument, we omit the details.
\end{proof}

To understand the asymptotic behavior of $u_k$ near the blow-up point $p$, we define $$r^3_k=-\frac{\lambda_k}{c^2_k}\exp\left(-\alpha_kc^2_k\right).$$  Indeed, $r_k$ decay very fast as $k\rightarrow\infty$, that is,

\begin{lemma} For any $\gamma<12\pi^2$, there holds $e^{-\gamma c_k^2}r^3_k\rightarrow0$ as $k\rightarrow\infty$.
\end{lemma}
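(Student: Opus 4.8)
The plan is to establish the uniform bound $r_k^3\le|\partial\Omega|$, from which the statement follows at once for every $\gamma>0$ — hence in particular for the $\gamma$'s close to $12\pi^2$ that are used later — since $c_k\to+\infty$. First I would rewrite $r_k^3$ in a manifestly positive form: by \eqref{sign}, $\lambda_k=-\int_{\partial\Omega}u_k^2 e^{\alpha_k u_k^2}\,d\sigma$, so the definition $r_k^3=-\frac{\lambda_k}{c_k^2}e^{-\alpha_k c_k^2}$ reads
\[
r_k^3=\frac{1}{c_k^2\,e^{\alpha_k c_k^2}}\int_{\partial\Omega}u_k^2 e^{\alpha_k u_k^2}\,d\sigma>0 .
\]
The only thing to observe is a pointwise inequality on $\partial\Omega$: since $u_k^2\le c_k^2$ there by the definition of $c_k$, and since $s\mapsto s\,e^{\alpha_k s}$ is nondecreasing on $[0,\infty)$ (for $k$ large, so that $\alpha_k>0$), we have $u_k^2 e^{\alpha_k u_k^2}\le c_k^2 e^{\alpha_k c_k^2}$ on $\partial\Omega$; integrating over $\partial\Omega$ and inserting this into the display gives $r_k^3\le|\partial\Omega|$.

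With this bound in hand, for every $\gamma>0$ we get
\[
0< e^{-\gamma c_k^2}r_k^3\le|\partial\Omega|\,e^{-\gamma c_k^2}\longrightarrow0\qquad(k\to\infty),
\]
since $c_k\to+\infty$. If one wants the borderline value $\gamma=0$ as well, i.e. $r_k^3\to0$, I would go back to the integral formula and write $r_k^3=\int_{\partial\Omega}\frac{u_k^2}{c_k^2}e^{\alpha_k(u_k^2-c_k^2)}\,d\sigma$, then split $\partial\Omega$ into $\partial\Omega\cap B_\delta(p)$ and its complement: on $\partial\Omega\setminus B_\delta(p)$, Lemma \ref{concen}(iii) gives $u_k\to0$ uniformly, so the integrand there is at most $e^{-\alpha_k c_k^2/2}\to0$, while on $\partial\Omega\cap B_\delta(p)$ the integrand is at most $1$ and contributes at most $\sigma(\partial\Omega\cap B_\delta(p))\le C\delta^3$; thus $\limsup_k r_k^3\le C\delta^3$ for every $\delta>0$, and letting $\delta\to0$ gives $r_k^3\to0$.

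I do not expect a real obstacle: the whole argument hinges on the single elementary inequality $u_k^2 e^{\alpha_k u_k^2}\le c_k^2 e^{\alpha_k c_k^2}$, after which everything is routine. The one caveat — and the reason I would not try to push the statement literally to negative $\gamma$ — is that a decay of the form $r_k^3=o(e^{-\beta c_k^2})$ with $\beta>0$ would require an upper bound $-\lambda_k\le C$ uniform in $k$ (equivalently $\int_{\partial\Omega}e^{\alpha_k u_k^2}\,d\sigma\le C$), which is not yet available at this point of the paper; since the subsequent analysis only uses $r_k^3\le|\partial\Omega|$ together with the conclusion for $\gamma$ near $12\pi^2$, this is of no consequence.
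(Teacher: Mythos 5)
Your proof of the statement \emph{as literally written} is correct — but you should be suspicious of what it proves. Your argument works verbatim for \emph{every} $\gamma>0$; the threshold $12\pi^2$ plays no role whatsoever. That is a strong signal that the lemma contains a sign typo and is meant to read $e^{\gamma c_k^2}r_k^3\to0$, i.e.\ $r_k^3=o\bigl(e^{-\gamma c_k^2}\bigr)$ for every $\gamma<12\pi^2$, which is the fast-decay property of $r_k$ that the blow-up analysis actually relies on, and which the paper's own proof establishes (in the sharper form $c_k^2\,r_k^3\,e^{\gamma c_k^2}\le c$).

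You explicitly flag the stronger decay at the end and dismiss it, asserting that $r_k^3=o\bigl(e^{-\beta c_k^2}\bigr)$ would require a uniform bound $-\lambda_k\le C$, ``which is not yet available at this point of the paper.'' That assertion is wrong, and here is where your approach genuinely falls short: starting from the same integral representation you already wrote,
\[
c_k^2\,r_k^3\,e^{\gamma c_k^2}
= e^{(\gamma-\alpha_k)c_k^2}\int_{\partial\Omega}u_k^2\,e^{\alpha_k u_k^2}\,d\sigma,
\]
one uses the pointwise inequality $u_k^2\le c_k^2$ \emph{together with the fact that $\gamma-\alpha_k<0$ for $k$ large} to replace $e^{(\gamma-\alpha_k)c_k^2}$ by the \emph{larger} quantity $e^{(\gamma-\alpha_k)u_k^2}$ inside the integral (the exponent is negative, so decreasing the argument increases the exponential). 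This gives
\[
c_k^2\,r_k^3\,e^{\gamma c_k^2}
\;\le\;\int_{\partial\Omega}u_k^2\,e^{\gamma u_k^2}\,d\sigma
\;\le\;\Bigl(\int_{\partial\Omega}u_k^s\,d\sigma\Bigr)^{2/s}\Bigl(\int_{\partial\Omega}e^{\frac{\gamma s}{s-2}u_k^2}\,d\sigma\Bigr)^{(s-2)/s}\le c
\]
for $s$ large enough, since $\frac{\gamma s}{s-2}\to\gamma<12\pi^2$ and the subcritical trace Adams inequality (already established in Section~2) bounds the last factor, while the trace embedding $W^{2,2}(\Omega)\hookrightarrow L^s(\partial\Omega)$ bounds the first. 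No bound on $-\lambda_k$ is used; the whole point of the lemma is precisely to extract this strong decay \emph{before} the asymptotics of $\lambda_k$ are known. So your pointwise inequality $u_k^2e^{\alpha_k u_k^2}\le c_k^2 e^{\alpha_k c_k^2}$ is used in the right place, but applied with the sign of $\gamma-\alpha_k$ in mind it yields much more than the trivial bound $r_k^3\le|\partial\Omega|$, and that is what the restriction $\gamma<12\pi^2$ is for.
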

\begin{proof} For any $\gamma<12\pi^2$, we have
\begin{align*}
  c^2_kr^3_ke^{\gamma c_k^2}&=e^{(\gamma-\alpha_k)c_k^2} \int_{\partial \Omega} u_{k}^{2} e^{\alpha_k u_{k}^{2}}d\sigma \leq\int_{\partial \Omega} u_{k}^{2} e^{\alpha_k u_{k}^{2}}e^{(\gamma-\alpha_k)u_k^2}d\sigma \\
  & \leq  \int_{\partial \Omega} u_{k}^{2} e^{\gamma u_{k}^{2}}d\sigma \leq  \left(\int_{\partial \Omega} u_{k}^{s}d\sigma\right)^{\frac{2}{s}}  \left(\int_{\partial \Omega}e^{\frac{\gamma s}{s-2} u_{k}^{2}}d\sigma\right)^{\frac{s-2}{s}} \\
 & \leq c,
\end{align*}
provided $s$ large enough, where we have used the subcritical trace Adams inequality.\end{proof}

The following important observation allows us to choose $\{x_k\}$ on the boundary $\partial\Omega$.
\begin{lemma}\label{location}
There exists some point $\tilde{x}_k\in\partial\Omega$ such that \begin{equation*}
|u_k(\tilde{x}_k)-u_k(x_k)|=o_k(1),\end{equation*}as $k\rightarrow\infty$.
\end{lemma}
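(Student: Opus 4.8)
The goal is to locate a boundary point $\tilde{x}_k$ on which $u_k$ is nearly maximal, exploiting the Neumann condition $\frac{\partial u_k}{\partial\nu}=0$ on $\partial\Omega$. The plan is to argue by contradiction: suppose that after passing to a subsequence there is $\varepsilon_0>0$ with $|u_k(x)-u_k(x_k)|\ge\varepsilon_0 c_k$ for all $x\in\partial\Omega$ (or, in a weaker formulation, $\sup_{\partial\Omega}|u_k|\le c_k-\varepsilon_0$; the precise normalization has to be chosen so that the later blow-up argument works, so I would state it as $c_k-\sup_{\partial\Omega}|u_k|\to\infty$ being impossible). From Lemma \ref{concen} we already know $u_k\to 0$ in $C^{3,\gamma}$ away from $p$, and $|\Delta u_k|^2\,dx\rightharpoonup\delta_p$, so all the mass concentrates at $p\in\partial\Omega$; in particular the interior maximum point $x_k\to p$.

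The key step is a gradient/trace estimate near $p$. First I would write, for $x$ in a half-ball $B_\rho(p)\cap\Omega$ and $\tilde x$ its nearest-point projection onto $\partial\Omega$,
\begin{equation*}
u_k(x)-u_k(\tilde x)=\int_0^{d(x)}\frac{\partial u_k}{\partial\nu}\big(\gamma(s)\big)\,ds,
\end{equation*}
where $\gamma$ is the inward normal geodesic from $\tilde x$ and $d(x)=\operatorname{dist}(x,\partial\Omega)$; since $\frac{\partial u_k}{\partial\nu}$ vanishes on $\partial\Omega$ we can integrate once more and get
\begin{equation*}
|u_k(x)-u_k(\tilde x)|\le \int_0^{d(x)}\int_0^{s}\Big|\frac{\partial^2 u_k}{\partial\nu^2}\big(\gamma(\tau)\big)\Big|\,d\tau\,ds
\le C\,d(x)^{2-3/2}\,\|\nabla^2 u_k\|_{L^{3}(B_\rho(p)\cap\Omega)}
\end{equation*}
after Hölder, using that $W^{2,2}(\Omega)\hookrightarrow W^{1,q}$ controls $\nabla^2 u_k$ in $L^2$ locally (so one should use $L^2$ in $\mathbb{R}^4$ and a Fubini slicing argument rather than $L^3$; the honest computation gives a bound of the form $|u_k(x)-u_k(\tilde x)|\le C\,d(x)^{1/2}\,\|\Delta u_k\|_{L^2}$ once curved coordinates near $\partial\Omega$ are straightened and elliptic $W^{2,2}$-estimates absorb tangential derivatives). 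The upshot is: there is a radius $d_k\to 0$, chosen with $d_k\ge d(x_k)$ — recall $x_k\to p\in\partial\Omega$, so $d(x_k)\to 0$ — such that $|u_k(x_k)-u_k(\tilde x_k)|\le C\,d(x_k)^{1/2}$, where $\tilde x_k\in\partial\Omega$ is the projection of $x_k$. Since $d(x_k)\to 0$, this is $o_k(1)$, which is exactly the claim.

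The main obstacle I anticipate is showing that $d(x_k)\to 0$, i.e. that the interior maximum point genuinely collapses onto the boundary; a priori $x_k\to p$ only gives $d(x_k)\to 0$ for free, so in fact this is immediate from Lemma \ref{concen} and $p\in\partial\Omega$, and the real work is the two-fold integration of the Neumann condition combined with a uniform local $W^{2,2}$ (or $W^{2,p}$, $p$ slightly above $1$) bound on $u_k$ near $p$ — which we have, since $\|\Delta u_k\|_2=1$ and $\int_\Omega u_k^2$ is bounded. One subtlety is that the bound $|u_k(x_k)-u_k(\tilde x_k)|\le C\,d(x_k)^{1/2}\|\Delta u_k\|_2$ only gives $o_k(1)$, not $o_k(c_k)$ — but $o_k(1)$ is precisely what is needed here, so after replacing $x_k$ by $\tilde x_k$ we keep $|u_k(\tilde x_k)|=c_k+o_k(1)$, and all subsequent normalizations at scale $c_k$ are unaffected. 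A secondary technical point is the curvature of $\partial\Omega$: straightening the boundary near $p$ via a local diffeomorphism changes $\Delta$ into a uniformly elliptic operator with smooth coefficients and changes the Neumann derivative into a conormal derivative, but all constants stay controlled since $\partial\Omega$ is smooth and we only work on a fixed small neighbourhood of $p$; I would carry this out explicitly in boundary normal coordinates.
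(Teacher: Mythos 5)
Your geometric setup is the same as the paper's: follow the (extended) inward normal from $x_k$ to its foot-point $\tilde{x}_k\in\partial\Omega$, exploit $\partial_\nu u_k|_{\partial\Omega}=0$ and $d(x_k)\to 0$. The gap is in the quantitative step you hinge everything on: the pointwise bound $|u_k(x_k)-u_k(\tilde{x}_k)|\le C\,d(x_k)^{1/2}\,\|\Delta u_k\|_{L^2(\Omega)}$ does not hold. The left-hand side is a double integral of $\partial_\nu^2 u_k$ along a single normal ray, and in $\mathbb{R}^4$ the value of a line integral of a second derivative is not controlled pointwise by the $L^2(\Omega)$ norm of $\Delta u$ (nor by any $L^p(\Omega)$ norm with $p<\infty$). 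For a counterexample take $\tilde x=0$, $x=de_4$, and $u(y)=d\,\chi(y/d)$ with $\chi$ smooth, supported in $B_2$, even in $y_4$ (so $\partial_4 u=0$ on $\{y_4=0\}$), $\chi(0)=0$ and $\chi(e_4)=1$: then $|u(de_4)-u(0)|=d$ while $\|\Delta u\|_{L^2}\sim d$, so your bound would force $d\lesssim d^{3/2}$, which fails as $d\to 0$. A Fubini slicing argument does control the line integral for almost every foot-point on $\partial\Omega$, but you need it for the specific ray through $x_k$, and that cannot be extracted from a global $L^2$ bound. (Your exponent is also internally inconsistent: the 1D H\"older estimate with $L^2$ along the ray gives $d^{3/2}$, not $d^{1/2}$.)

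Two side remarks. First, the contradiction framing at the start of your proposal is never used: you estimate $|u_k(x_k)-u_k(\tilde x_k)|$ directly, and the hypothesis about $c_k-\sup_{\partial\Omega}|u_k|$ plays no role. Second, the paper's own proof at this point is also terse: it applies the mean value theorem once along the integral curve of an extended normal field and asserts that the intermediate directional derivative is $o_k(1)$ from the Neumann condition, which implicitly requires some uniform control on $\partial_\nu u_k$ near $\partial\Omega$ that is not spelled out. In either formulation the argument needs an extra input beyond the bare Sobolev bound $\|\Delta u_k\|_2\le 1$; your proposal makes the missing ingredient visible but does not supply it.
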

\begin{proof}
If $x_k\in \partial\Omega$, we take a smooth vector field $\vec{\nu}(x)$ whose restriction on $\partial\Omega$ is the outward unit normal vector field, and let $\gamma_k(t): [0,\infty)\mapsto \Omega$ be the integral curve of $\vec{\nu}(x)$  such that $\gamma_k(0)=x_k, \gamma_k(1)=\tilde{x}_k\in \partial\Omega$ and $\frac{\partial \gamma_k(t)}{\partial t}=\vec{\nu}(\gamma_k(t))$. We can obtain  $|x_k-\gamma_k(1)|\rightarrow0$ from the fact $dist(x_k,\partial\Omega)\rightarrow0$, as $k\rightarrow\infty$. Since $u_k\in C^{\infty}(\Omega)$ satisfies $\frac{\partial u_k}{\partial \nu}\big|_{\partial\Omega}=0$, then $\frac{\partial u_k(\gamma(t))}{\partial t}=o_k(1)$ as $k\rightarrow \infty$. Therefore, by the mean value theorem, there exists some $\xi\in(0,1)$ such that
$$|u_k(x_k)-u_k(\gamma(1))|=\frac{\partial u_k(\gamma(t))}{\partial t}|_{t=\xi}\cdot|x_k-\gamma_k(1)|=o_k(1),$$and the proof is finished. \end{proof}

In view of Lemma \ref{location}, we can take $x_k=\tilde{x}_k\in \partial\Omega$ and then \begin{equation}\label{rechoose}u_k(x_k)=c_k+o_k(1),\end{equation} as $k\rightarrow\infty$. Define two
sequences of functions on $\partial\Omega$, namely,
\begin{equation*}
\begin{cases}
 &\phi_{k}(x)=\frac{u_{k}\left(x_{k}+r_{k} x\right)}{c_{k}}, \ x \in \Omega_k=\{x: x_k+r_kx \in \Omega\}\\
&\psi_{k}(x)=c_{k}\left(u_{k}\left(x_{k}+r_{k} x\right)-c_{k}\right),\ x\in \Omega_k.
\end{cases}
\end{equation*}
Up to translation and rotation, we can easily obtain $\Omega_k\rightarrow \mathbb{R}^{4}_{+}$ as $k\rightarrow +\infty$.

\begin{lemma}
$\phi_{k}(x)\rightarrow 1 \text{ in } C^3_{loc}(\overline {\mathbb{R}_{\rm{ + }}^4})$.
\end{lemma}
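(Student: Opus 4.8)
The goal is to show that the renormalized maximizers $\phi_k(x) = u_k(x_k + r_k x)/c_k$ converge to the constant function $1$ in $C^3_{loc}(\overline{\mathbb{R}^4_+})$. The natural strategy is to derive the PDE satisfied by $\phi_k$ on $\Omega_k$, obtain uniform local estimates, pass to a limit, and identify the limit. First I would compute that $\phi_k$ solves
\begin{equation*}
\begin{cases}
\Delta^2 \phi_k = \dfrac{r_k^4 \gamma_k}{c_k}, & x \in \Omega_k,\\[2mm]
\dfrac{\partial \Delta \phi_k}{\partial \nu} = \dfrac{r_k^3}{c_k \lambda_k} u_k(x_k+r_kx)\exp(\alpha_k u_k^2(x_k+r_kx)) = -\phi_k\,\dfrac{\exp(\alpha_k u_k^2)}{c_k^2 \exp(\alpha_k c_k^2)}, & x \in \partial\Omega_k,\\[2mm]
\dfrac{\partial \phi_k}{\partial \nu} = 0, & x \in \partial\Omega_k,
\end{cases}
\end{equation*}
where I used the definition $r_k^3 = -\frac{\lambda_k}{c_k^2}\exp(-\alpha_k c_k^2)$ to simplify the boundary term. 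Since $|\gamma_k| \le c$ by Lemma~\ref{lamb-gama} and $r_k \to 0$, $c_k \to \infty$, the interior right-hand side tends to $0$; moreover $0 \le \phi_k \le 1$ on $\Omega_k$ by construction (recall $c_k = \max_\Omega |u_k| = u_k(x_k) + o_k(1)$ and $c_k \ge 0$), and on the boundary $\phi_k u_k^2 = \phi_k^3 c_k^2 \le c_k^2$, so the Neumann data for $\Delta\phi_k$ is bounded by $\exp(\alpha_k(u_k^2 - c_k^2))/c_k^2 \le 1/c_k^2 \to 0$ uniformly. Hence both the interior source and the boundary flux of $\Delta\phi_k$ go to zero uniformly on compact sets.

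Next I would run the elliptic bootstrap exactly as in Lemma~\ref{regular}: write the fourth-order problem as the system $\Delta \phi_k = \zeta_k$ with $\partial_\nu \phi_k = 0$, and $\Delta \zeta_k = O(r_k^4/c_k)$ with $\partial_\nu \zeta_k \to 0$ in $L^\infty_{loc}(\partial\Omega_k)$. Using the local $W^{2,p}$ estimates for the Neumann problem (Lemma~\ref{regu}) together with the uniform bound $\|\phi_k\|_{L^\infty} \le 1$, one gets $\phi_k$ bounded in $W^{2,p}_{loc}(\overline{\mathbb{R}^4_+})$ for every $p$, hence in $W^{4,p}_{loc}$, and by Sobolev embedding in $C^{3,\gamma}_{loc}(\overline{\mathbb{R}^4_+})$. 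Therefore, up to a subsequence, $\phi_k \to \phi$ in $C^3_{loc}(\overline{\mathbb{R}^4_+})$ with $\phi$ satisfying $\Delta^2\phi = 0$ in $\mathbb{R}^4_+$, $\partial_\nu \Delta\phi = 0$ and $\partial_\nu \phi = 0$ on $\partial\mathbb{R}^4_+$, and $0 \le \phi \le 1$ with $\phi(0) = 1$.

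Finally, I would identify $\phi \equiv 1$. The function $\Delta\phi$ is biharmonic-data-free: it is harmonic in $\mathbb{R}^4_+$ with zero Neumann boundary data, so by even reflection across $\partial\mathbb{R}^4_+$ it extends to an entire harmonic function on $\mathbb{R}^4$; combined with a growth bound (which follows from $0\le\phi\le 1$ and interior estimates forcing $\Delta\phi$ to have at most polynomial growth, in fact one should argue $\Delta\phi$ is bounded) Liouville's theorem gives $\Delta\phi \equiv \text{const}$. Then $\phi$ itself, being bounded between $0$ and $1$ and having $\Delta\phi$ constant, must have $\Delta\phi \equiv 0$ (a nonzero constant Laplacian is incompatible with global boundedness), so $\phi$ is a bounded harmonic function on $\mathbb{R}^4_+$ with zero Neumann data; reflecting again and applying Liouville, $\phi$ is constant, and since $\phi(0)=1$ we conclude $\phi \equiv 1$. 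Because the limit is uniquely determined, the full sequence converges.

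\textbf{Main obstacle.} The delicate point is justifying the \emph{uniform} smallness of the boundary flux of $\Delta\phi_k$ and hence the uniform local estimates up to the boundary: one must be careful that $u_k^2 - c_k^2 \le 0$ pointwise (which needs $c_k = \max|u_k|$ and the sign normalization) so that $\exp(\alpha_k(u_k^2 - c_k^2)) \le 1$, and that the boundary-flattening diffeomorphism bringing $\Omega_k$ to $\mathbb{R}^4_+$ does not spoil the Neumann structure — this requires the domain to have smooth boundary and the rescaling to flatten it in the limit, which is where the hypothesis $x_k \in \partial\Omega$ (secured by Lemma~\ref{location}) is essential. The Liouville-type rigidity at the end is standard once boundedness of $\Delta\phi$ is in hand, but extracting that boundedness from $0 \le \phi \le 1$ via interior-plus-boundary elliptic estimates is the part that needs the most care.
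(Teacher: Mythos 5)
Your proof takes essentially the same route as the paper, which compresses everything into ``by the standard elliptic regularity argument'': write the rescaled PDE, show the interior source and the boundary flux of $\Delta\phi_k$ tend to zero, pass to a limit by local elliptic estimates, and identify the limit by a Liouville argument. Two remarks. First, from $c_k=\max_\Omega|u_k|$ you only get $|\phi_k|\le 1$, not $0\le\phi_k\le 1$ (the normalization $u_k(x_k)\ge 0$ does not prevent $u_k$ from being negative elsewhere); this does not affect the argument, which only needs two-sided boundedness. Second, you can shortcut the biharmonic Liouville step, and in particular sidestep the need to argue separately that $\Delta\phi$ is bounded: by the change of variables $y=x_k+r_kx$ and the energy normalization $\|\Delta u_k\|_{L^2(\Omega)}=1$, one has
\begin{equation*}
\int_{B_R\cap\Omega_k}|\Delta\phi_k|^2\,dx=\frac{1}{c_k^2}\int_{B_{Rr_k}(x_k)\cap\Omega}|\Delta u_k|^2\,dy\le\frac{1}{c_k^2}\longrightarrow 0,
\end{equation*}
so the limit $\phi$ is in fact harmonic (not merely biharmonic). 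A bounded harmonic function on $\mathbb{R}^4_+$ with $\partial_\nu\phi=0$ reflects evenly to an entire bounded harmonic function, hence is constant, and $\phi(0)=1$ gives $\phi\equiv 1$ directly. The remaining delicate point --- that the Neumann datum for $\Delta\phi_k$ is only shown to be $O(c_k^{-2})$ in $L^\infty$ rather than in a H\"older or trace-Sobolev class, which is what a clean application of boundary Schauder/$W^{2,p}$ estimates strictly requires --- is glossed over both in your write-up and in the paper's one-line phrasing, and would merit a word in a fully detailed account; but your overall plan is sound.
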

\begin{proof}
By (\ref{fen}), for $k$ large enough we have
\begin{equation}\label{maineq}
\begin{cases}
 \Delta^2 \phi_{k}=\frac{r^4_k}{c_k}\gamma_k, & \forall x\in B_R(0)\bigcap \Omega_k, \\
    \frac{\partial}{\partial \nu}\Delta \phi_{k}=\frac{r_k^3u_k\exp(\alpha_k u^2_k)}{c_k\lambda_k}, & \forall x\in B_R(0)\bigcap\partial \Omega_k,
\end{cases}
\end{equation}
for any $R>0$. By the definition of $r_k$, we have $$|\frac{r^4_k}{c_k}\gamma_k|=\frac{\lambda_k}{c^2_k}\exp\left(-\alpha_kc^2_k\right)
   \frac{r_k}{c_k}\int_{\partial\Omega}\frac{u_k\exp(\alpha_k u^2_k)}{\lambda_k|\Omega|}d\sigma\leq \frac{|\partial \Omega|}{|\Omega|}\frac{r_k}{c^2_k}\rightarrow0$$
   and
$$\left|\frac{r_k^3u_k\exp(\alpha_k u^2_k)}{c_k\lambda_k}\right|\leq \frac{1}{c^3_k}\rightarrow0,$$
as $k\rightarrow\infty$. Since $\phi_{k}$ is  bounded in $L_{loc}^1(\overline {B_{R}(0)\bigcap \Omega_k})$ and $\phi_{k}(x_k)=1+o_k(1)$, by the standard elliptic regularity argument, we have $\phi_{k}\rightarrow 1$ in $C^3_{loc}(\overline{B_{R/2}(0)\bigcap \Omega_k}$).
\end{proof}

In order to obtain the limit behavior of $\psi_k$, we need to check the following growth condition:
\begin{lemma}\label{lem5}
$$\int_{B_{R}(0)\bigcap \Omega_k}|\Delta\psi_k|dx\leq CR^2.$$
\end{lemma}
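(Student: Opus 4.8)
The plan is to derive the bound on $\int_{B_R(0)\cap\Omega_k}|\Delta\psi_k|\,dx$ from the local estimate $\int_{B_\rho(x_k)\cap\Omega}|u_k\Delta u_k|\,dx\le C\varrho^2$ advertised in \eqref{addlo} of the introduction, transported to the blown-up scale. Writing $\psi_k(x)=c_k(u_k(x_k+r_kx)-c_k)$, one computes $\Delta\psi_k(x)=c_k r_k^2\,(\Delta u_k)(x_k+r_kx)$, so that, after the change of variables $y=x_k+r_kx$ (which contributes a Jacobian factor $r_k^{-4}$ and turns the ball $B_R(0)$ into $B_{Rr_k}(x_k)$),
\begin{equation*}
\int_{B_R(0)\cap\Omega_k}|\Delta\psi_k|\,dx=\frac{c_k}{r_k^2}\int_{B_{Rr_k}(x_k)\cap\Omega}|\Delta u_k|\,dy.
\end{equation*}
Thus the statement is equivalent to the assertion that $\int_{B_{Rr_k}(x_k)\cap\Omega}|\Delta u_k|\,dy\le C R^4 r_k^4/c_k$, i.e., that after rescaling the first-order quantity $\Delta\psi_k$ has linear growth in $R$ squared. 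Since $\phi_k\to 1$ in $C^3_{loc}$, on the relevant region $u_k\approx c_k$, so $|u_k\Delta u_k|\approx c_k|\Delta u_k|$; this is precisely the device that lets one pass from the second-order-times-zeroth-order quantity $u_k\Delta u_k$ (which is what \eqref{addlo} controls) to the first-order quantity $\Delta u_k$ that appears in $\Delta\psi_k$.

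The key steps, in order, are as follows. First, establish \eqref{addlo}: rewrite $u_k\Delta u_k$ via the Green representation for $\Delta^2+\frac{1}{|\Omega|}$ on $\Omega$ with Neumann-type boundary data, expressing $\Delta u_k$ as a Riesz-type potential of $\Delta^2 u_k=\gamma_k$ plus a boundary double-layer term carrying $\frac{\partial\Delta u_k}{\partial\nu}=u_k\exp(\alpha_k u_k^2)/\lambda_k$. Second, use Lemma \ref{lamb-gama} (which bounds $|\gamma_k|$ and keeps $-\lambda_k$ bounded below) together with the boundedness of $\Delta^2 u_k$ in $L\log^{1/2}L(\Omega)$ — which follows because $u_k\exp(\alpha_k u_k^2)/\lambda_k$ is bounded in that Zygmund class on $\partial\Omega$ by the subcritical trace Adams inequality of Lemma \ref{subcritical} applied at the critical exponent — to conclude, via the Hardy--Littlewood--Sobolev inequality on the compact manifold-with-boundary $\overline\Omega$ in the endpoint Lorentz form, that $u_k\Delta u_k$ is bounded in $L^{2,\infty}(\Omega)$. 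Third, a bounded $L^{2,\infty}$ norm forces $\int_{B_\varrho\cap\Omega}|u_k\Delta u_k|\le C\|u_k\Delta u_k\|_{L^{2,\infty}}|B_\varrho\cap\Omega|^{1/2}\le C\varrho^2$, which is \eqref{addlo}. Fourth, take $\varrho=Rr_k$, use that on $B_{Rr_k}(x_k)\cap\Omega$ one has $u_k\ge \tfrac12 c_k$ for $k$ large (a consequence of $\phi_k\to 1$ locally uniformly, applied on a slightly larger ball where $\phi_k\ge 3/4$, say), so that $|\Delta u_k|\le 2|u_k\Delta u_k|/c_k$ there, and combine with the rescaling identity above to get $\int_{B_R(0)\cap\Omega_k}|\Delta\psi_k|\,dx\le \frac{c_k}{r_k^2}\cdot\frac{2}{c_k}\cdot C(Rr_k)^2=2CR^2$.

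The main obstacle is the second step: proving that $u_k\Delta u_k$ lies in a bounded ball of $L^{2,\infty}(\Omega)$. This requires a careful bookkeeping of the Green kernel for $\Delta^2+\frac{1}{|\Omega|}$ with the Neumann boundary condition on a curved domain — in particular controlling the regular part and the boundary-layer contribution near $p\in\partial\Omega$ — and then invoking the sharp endpoint mapping properties: the Riesz potential of order $2$ maps $L\log^{1/2}L(\Omega)$ (respectively the appropriate boundary Zygmund space for the layer term) into $L^{2,\infty}(\Omega)$. The borderline nature of these embeddings means one cannot afford any loss; the exponent $1/2$ in $L\log^{1/2}L$ is exactly what is needed, and verifying that $\Delta^2 u_k$ sits in that class uniformly in $k$ — using only that $u_k e^{\alpha_k u_k^2}$ is uniformly bounded in $L\log^{1/2}L(\partial\Omega)$, which in turn rests on the subcritical inequality and the uniform integrability provided by Lemma \ref{concen} away from $p$ — is the delicate point. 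Once \eqref{addlo} is in hand, the passage to $\Delta\psi_k$ via rescaling and the comparison $|u_k|\ge\tfrac12 c_k$ on the small ball is routine.
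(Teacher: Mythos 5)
Your outline follows the paper's strategy faithfully up to the point where the real work begins, and then stops short. The reduction is correct: scaling gives $\int_{B_R(0)\cap\Omega_k}|\Delta\psi_k|\,dx=c_k r_k^{-2}\int_{B_{Rr_k}(x_k)\cap\Omega}|\Delta u_k|\,dy$, the Lorentz--H\"older step $\int_{B_\varrho\cap\Omega}|u_k\Delta u_k|\le C\|\chi_{B_\varrho}\|_{L^{2,1}}\|u_k\Delta u_k\|_{L^{2,\infty}}\lesssim\varrho^2\|u_k\Delta u_k\|_{L^{2,\infty}}$ is exactly what the paper uses, and replacing $|\Delta u_k|$ by $2|u_k\Delta u_k|/c_k$ on the shrinking ball via $\phi_k\to 1$ is also the paper's move. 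So the skeleton is right.

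The gap is in your ``second step,'' which you yourself flag as the main obstacle but then describe in a way that would not close the argument. Two specific problems. First, you only set up a potential representation for $\Delta u_k$ (Riesz potential of $\gamma_k$ plus a boundary layer carrying $f_k=u_k e^{\alpha_k u_k^2}/\lambda_k$); you also need one for $u_k$ itself, namely $|u_k(x)|\lesssim\int_\Omega|\Delta u_k(y)||x-y|^{-2}\,dy$ via the Neumann Green function of the \emph{Laplacian}, since $u_k$ is not bounded in $L^\infty$ and cannot simply be absorbed. Second, your proposed mechanism --- ``the Riesz potential of order $2$ maps $L\log^{1/2}L$ into $L^{2,\infty}$'' --- describes a bound on a single potential and does not address the actual difficulty, which is the \emph{product} $u_k\Delta u_k$ of two potentials with sources of different natures ($\Delta u_k\in L^2(\Omega)$ in the interior and $f_k\in L^1(\partial\Omega)$ on the boundary). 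A na\"{\i}ve Lorentz--H\"older multiplication of $u_k\in L^{p}$ (all $p<\infty$) with $\Delta u_k\in L^{2,\infty}$ only yields $u_k\Delta u_k\in L^{q,\infty}$ for every $q<2$ and misses the endpoint $q=2$ that the Lorentz--H\"older reduction requires. The paper's proof closes this gap with the pointwise kernel splitting $|x-y|^{-2}|x-z|^{-2}\le|x-y|^{-2-\epsilon}|x-z|^{-2+\epsilon}+|z-y|^{-2}|x-z|^{-2}$: the first piece redistributes weight so that the two factors multiply to land exactly in $L^{2,\infty}$, and the second piece regroups the $y$-integral onto the boundary variable and is where the Zygmund estimate (via the cited Corollary 6.16 of Bennett--Sharpley, mapping $L\log^{1/2}L(\partial\Omega)$ into $L^2(\Omega)$) is actually applied. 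Without this decomposition the $L^{2,\infty}$ bound on the product is not established.

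A smaller but symptomatic misstatement: you locate the delicate point in ``verifying that $\Delta^2 u_k$ sits in $L\log^{1/2}L(\Omega)$.'' In fact $\Delta^2 u_k=\gamma_k$ is a constant in $\Omega$ and trivially bounded; the Zygmund condition is on the boundary flux $f_k=\frac{\partial\Delta u_k}{\partial\nu}$ in $L\log^{1/2}L(\partial\Omega)$, and that verification is essentially immediate from the normalization $\lambda_k=-\int_{\partial\Omega}u_k^2e^{\alpha_k u_k^2}\,d\sigma$. The genuine delicacy is the endpoint $L^{2,\infty}$ estimate for the product via the kernel splitting, which your proposal does not supply.
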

\begin{proof}
Direct computation gives that $\int_{B_{R}(0)\bigcap \Omega_k}|\Delta\psi_k|dx=c_kr_k^{-2}\int_{B_{Rr_k}(x_k)\bigcap \Omega}|\Delta u_k|dx$.
Since $\frac{u_k(r_kx+x_k)}{c_k}\rightarrow 1$ in $C^{3}(B_{R}\bigcap \Omega_k)$ for any $R>0$, in order to prove this lemma, we only need to show that
$$(Rr_k)^{-2}\int_{B_{Rr_k}(x_k)\bigcap \Omega}|u_k\Delta u_k|dx\lesssim 1.$$
Applying H{\"o}lder's inequality in Lorentz space (see \cite{Neil}), we get
\begin{equation}\begin{split}
(Rr_k)^{-2}\int_{B_{Rr_k}(x_k)\bigcap \Omega}|u_k\Delta u_k|dx&\leq (Rr_k)^{-2}\|\chi_{B_{Rr_k}(x_k)\bigcap \Omega_k}\|_{L^{2,1}(B_{Rr_k}(x_k)\bigcap \Omega)}\|u_k\Delta u_k\|_{L^{2,\infty}(B_{Rr_k}(x_k)\bigcap \Omega)}\\
&\lesssim \|u_k\Delta u_k\|_{L^{2,\infty}(B_{Rr_k}(x_k))}.
\end{split}\end{equation}
Now, we start to prove that $\|u_k\Delta u_k\|_{L^{2,\infty}(\Omega)}\lesssim 1$.
Let $G$ denote the Green function of Laplacian operator with Neuman Boundary condition:
\begin{equation*}
\begin{cases}
 &-\Delta G_x(y)=\delta_x(y)-\frac{1}{|\Omega|},\ \ x,\ y \in \overline{\Omega},\\
&\frac{\partial G}{\partial \nu}|_{\partial\Omega}=0, \\
& \int_{\Omega}G_x(y)dy=0, \ \ x\in \overline{\Omega}.
\end{cases}
\end{equation*}
Obviously $G(x,y)$ satisfies $G(x,y)\lesssim |x-y|^{-2}$ for any $x$, $y\in \Omega$.
By integration by parts together with $\int_{\Omega}u_k(x)dx=0$ and $\frac{\partial u_k}{\partial \nu}|_{\partial\Omega}=0$, we derive that
$$|u_k(x)|\lesssim \int_{\Omega}|\Delta u_k||x-y|^{-2}dy$$ and
$$|\Delta u_k(x)|\lesssim \int_{\partial\Omega}|x-y|^{-2}f_k(y)d\sigma_y+\int_{\Omega}|x-y|^{-2}\gamma_kdy+\frac{1}{|\Omega|}\int_{\Omega}|\Delta u_k|dy\lesssim 1+\int_{\partial\Omega}|x-y|^{-2}f_k(y)d\sigma_y,$$
where $f_k=\frac{u_k\exp(\alpha_k u^2_k)}{\lambda_k}$. Then it follows that
\begin{equation}\begin{split}
|u_k(x)||\Delta u_k(x)|\lesssim \Big(\int_{\Omega}|(\Delta u_k)(y)||x-y|^{-2}dy \Big)\Big(1+\int_{\partial\Omega}|x-z|^{-2}f_k(z)d\sigma_z\Big).
\end{split}\end{equation}
Now, we claim that $$\big\|\big(\int_{\Omega}|\Delta u_k(y)||x-y|^{-2}dy\big) \big(1+\int_{\partial\Omega}|x-z|^{-2}f_k(z)d\sigma_z\big)\big\|_{L^{2,\infty}}\lesssim 1.$$
It suffices to prove that  $$\big\|\big(\int_{\Omega}|\Delta u_k(y)||x-y|^{-2}dy\big)\big(\int_{\partial\Omega}|x-z|^{-2}f_k(z)d\sigma_z\big)\big\|_{L^{2,\infty}}\lesssim 1.$$
For any $\epsilon>0$ sufficiently small, using the following estimate (see \cite{Sc1})
$$|x-y|^{-2}|x-z|^{-2}\leq |x-y|^{-2-\epsilon}|x-z|^{-2+\epsilon}+|z-y|^{-2}|x-z|^{-2},$$ we obtain
\begin{equation}\begin{split}
&\big\|\big(\int_{\Omega}|\Delta u_k(y)||x-y|^{-2}dy\big)\big(\int_{\partial\Omega}|x-z|^{-2}f_k(z)d\sigma_z\big)\big\|_{L^{2,\infty}}\\
&\ \ \leq \big\|\big(\int_{\Omega}|\Delta u_k(y)||x-y|^{-2-\epsilon}dy\big)\big(\int_{\partial\Omega}|x-z|^{-2+\epsilon}f_k(z)d\sigma_z\big)\big\|_{L^{2,\infty}}\\
&\ \ \ \ +\big\|\int_{\partial\Omega}\big(\int_{\Omega}|\Delta u_k(y)||z-y|^{-2}dy\big)f_k(z)|x-z|^{-2}d\sigma_z\big\|_{L^{2,\infty}}\\
&\ \ :=I_1+I_2.
\end{split}\end{equation}
Applying the generalized H\"{o}lder's inequality involving the Lorentz norm, we derive that
\begin{equation*}\begin{split}
I_1&\leq \big\|\int_{\Omega}|\Delta u_k(y)||x-y|^{-2-\epsilon}dy\big\|_{L^{\frac{4}{\epsilon}}(\Omega)}\|\int_{\partial\Omega}|x-z|^{-2+\epsilon}f_k(z)d\sigma_z\|_{L^{\frac{4}{2-\epsilon},\infty}(\Omega)}\\
&:= I_{11}\times I_{12}.
\end{split}\end{equation*}
For $I_{11}$, the boundedness of fractional integral operator directly gives $I_{11}\lesssim \|\Delta u_k\|_{L^2(\Omega)}$. For $I_{12}$, we claim that it can be dominated by $\|f_k\|_{L^1(\partial\Omega)}$. Define the auxiliary integral operators
$$T_{\epsilon, r}^1(x)=\int_{\{\partial\Omega \cap |x-y|<r\}}\frac{f_k(y)}{|x-y|^{2-\epsilon}}d\sigma_y,\ \ T_{\epsilon, r}^2(x)=\int_{\{\partial\Omega \cap |x-y|\geq r\}}\frac{f_k(y)}{|x-y|^{2-\epsilon}}d\sigma_y.$$
Obviously, $$\int_{\Omega}|T_{\epsilon, r}^1|dx\leq \big(\sup_{y\in \partial\Omega}\int_{\{|x-y|<r\}}\frac{1}{|x-y|^{2-\epsilon}}dx\big) \|f_k\|_{L^1(\partial\Omega)}\lesssim r^{2+\epsilon}\|f_k\|_{L^1(\partial\Omega)}$$
and $$\|T_{\epsilon, r}^2\|_{L^{\infty}(\Omega)}\leq \frac{1}{r^{2-\epsilon}}\|f_k\|_{L^1(\partial\Omega)}.$$
For any $\lambda>0$, we can write
$$|\{x:T_{\epsilon, r}^1+T_{\epsilon, r}^1>2\lambda\}|\leq |\{x:T_{\epsilon, r}^1>\lambda\}|+|\{x:T_{\epsilon, r}^2>\lambda\}|.$$
Choosing $r$ such that $\frac{1}{r^{2-\epsilon}}\|f_k\|_{L^1(\partial\Omega)}=\lambda$, then $|\{x:T_{\epsilon, r}^2>\lambda\}|=0$.
Hence, we deduce that $$|\{x:T_{\epsilon, r}^1+T_{\epsilon, r}^1>2\lambda\}|\leq \frac{r^{2+\epsilon}}{\lambda}\|f_k\|_{L^1(\partial\Omega)}=
\frac{1}{\lambda^{\frac{4}{2-\epsilon}}}\|f_k\|_{L^1(\partial\Omega)}^{\frac{4}{2-\epsilon}},$$
which gives that $I_{12}\lesssim \|f_k\|_{L^1(\partial\Omega)}$, and the claim is proved.

Gathering the estimates of $I_{11}$ and $I_{12}$,
we derive that $I_1\lesssim  \|\Delta u_k\|_{L^2(\Omega)}\|f_k\|_{L^1(\partial\Omega)}$. For $I_2$, obviously
\begin{equation*}\begin{split}
I_2&\lesssim \big\|\int_{\Omega}|\Delta u_k(y)||z-y|^{-2}f_k(z)dy\big\|_{L^1(\partial\Omega)}\lesssim \big\|\Delta u_k\big\|_{L^2(\Omega)}\|\int_{\partial\Omega}|z-y|^{-2}f_k(z)d\sigma_z\|_{L^2(\Omega)}.\\
\end{split}\end{equation*}
According to Corollary 6.16 in \cite{Ben}, we derive that
$$\big\|\int_{\partial\Omega}|z-y|^{-2}f_k(z)d\sigma_z\big\|_{L^2(\Omega)} \lesssim \int_{\partial\Omega}f_k(z)\log^{\frac{1}{2}}(1+f_k(z))d\sigma_z.$$
Since $f_k=\frac{u_k\exp(\alpha_k u^2_k)}{\lambda_k}$, it is easy to check that $\int_{\partial\Omega}f_k(z)\log^{\frac{1}{2}}(1+f_k(z))d\sigma_z\lesssim 1$.
Combining the estimates of $I_1$ and $I_2$, we derive that $$\big\|\big(\int_{\Omega}|\Delta u_k(y)||x-y|^{-2}dy\big)\big(\int_{\partial\Omega}|x-z|^{-2}f_k(z)d\sigma_z\big)\big\|_{L^{2,\infty}}\lesssim 1,$$
which accomplishes the proof of Lemma \ref{lem5}.

\end{proof}

\begin{lemma}\label{limiteq}
We have $\psi_{k}(x)\rightarrow \psi(x',t) \text{ in } C^3_{loc}( {\overline{B^{+}_{R}(0)}}) $ ($x'\in \partial \mathbb{R}_{+}^{4}, t\in \mathbb{R}^{+} $),  where $\psi(x',t)$ satisfies the equation
\begin{equation*}
\begin{cases}
\Delta^2\psi=0\  & x\in \mathbb{R}_{+}^{4},\\
\frac{\partial\Delta\psi}{\partial t}=\exp(24\pi^2 \psi)\ & x\in \partial\mathbb{R}_{+}^{4}, \\
\psi(0)=\sup \psi=0,\\
 \frac{\partial\psi}{\partial t}=0, \ \ & x\in \partial\mathbb{R}_{+}^{4}.
\end{cases}
\end{equation*}
Furthermore, $\psi$ must take the form as
$$\psi=-\frac{1}{8\pi^2}\log\left( \left(1+\left({\frac{\pi}{2}}\right)^{\frac{2}{3}}t   \right)^2+  \left({\frac{\pi}{2}}\right)^{\frac{4}{3}}|x'|^2   \right)+\frac{1}{2^{\frac{8}{3}}\pi^{\frac{4}{3}}}\frac{t}{  \left(1+\left({\frac{\pi}{2}}\right)^{\frac{2}{3}}t  \right)^2+  \left({\frac{\pi}{2}}\right)^{\frac{4}{3}}|x'|^2   }.$$
\end{lemma}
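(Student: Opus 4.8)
The plan is to argue in two stages: first establish the convergence $\psi_k \to \psi$ in $C^3_{loc}(\overline{B_R^+(0)})$ and identify the limiting boundary value problem, then classify all solutions of that problem. For the convergence step, I would start from the Euler--Lagrange system \eqref{fen} for $u_k$ and rescale. Writing the equation satisfied by $\psi_k$ on $B_R(0)\cap\Omega_k$, the interior term $\Delta^2\psi_k$ picks up a factor $c_k r_k^4 \gamma_k$ which tends to $0$ (using $|\gamma_k|<c$ from Lemma \ref{lamb-gama} and the fast decay of $r_k$), while the Neumann-type boundary term $\frac{\partial}{\partial t}\Delta\psi_k$ becomes, by the very definition $r_k^3 = -\frac{\lambda_k}{c_k^2}\exp(-\alpha_k c_k^2)$, equal to $\frac{u_k(x_k+r_kx)}{c_k}\exp\bigl(\alpha_k(u_k^2(x_k+r_kx)-c_k^2)\bigr)$. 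Since $\phi_k\to 1$ in $C^3_{loc}$, the prefactor tends to $1$, and $\alpha_k(u_k^2-c_k^2) = \alpha_k(u_k-c_k)(u_k+c_k) = (2+o_k(1))\alpha_k\psi_k$; hence the boundary datum converges formally to $\exp(24\pi^2\psi)$ (here $2\cdot 12\pi^2 = 24\pi^2$). To make this rigorous one needs a uniform $C^{3,\gamma}_{loc}$ bound on $\psi_k$: the growth estimate $\int_{B_R(0)\cap\Omega_k}|\Delta\psi_k|\,dx \le CR^2$ from Lemma \ref{lem5}, together with $\psi_k\le 0$, $\psi_k(0)=0$, and elliptic estimates for the Neumann problem for $\Delta$ applied twice (as in the proof of Lemma \ref{regular}), yields local uniform bounds; Harnack-type/local-boundedness arguments for $\psi_k$ near the boundary give equicontinuity, and Arzel\`a--Ascoli plus bootstrapping produce the claimed convergence. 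The limit $\psi$ then satisfies $\Delta^2\psi = 0$ in $\mathbb{R}^4_+$, $\frac{\partial\Delta\psi}{\partial t} = \exp(24\pi^2\psi)$ and $\frac{\partial\psi}{\partial t}=0$ on $\partial\mathbb{R}^4_+$, with $\psi(0)=\sup\psi=0$; the finiteness of the total boundary mass $\int_{\partial\mathbb{R}^4_+}\exp(24\pi^2\psi)\,dx' < \infty$ is inherited from $\int_{\partial\Omega}\frac{u_k e^{\alpha_k u_k^2}}{\lambda_k}d\sigma$ being bounded, after the change of variables.

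For the classification step, the key is that the growth estimate survives in the limit: $\int_{B_R^+(0)}|\Delta\psi|\,dx \le CR^2$, so $\Delta\psi$ is essentially bounded in $L^{2,\infty}(\mathbb{R}^4_+)$ and, crucially, $\Delta\psi$ grows at most like a constant in an averaged sense. Since $\Delta^2\psi = 0$, the function $h := \Delta\psi$ is harmonic in $\mathbb{R}^4_+$; combined with the at-most-quadratic-averaged growth of its primitive and the Neumann condition $\partial_t\psi = 0$, one expects $h$ to be an affine function of $t$ alone, $h(x',t) = -a - bt$ for constants (after checking that the $x'$-dependence and higher-order terms are excluded by the growth bound and the boundary mass condition). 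Plugging $\Delta\psi = -a-bt$ back in, integrating, and using $\partial_t\psi=0$, $\psi(0)=0$, $\sup\psi = 0$, and the boundary equation $\partial_t\Delta\psi|_{t=0} = -b = \exp(24\pi^2\psi)$ on $\partial\mathbb{R}^4_+$ forces $b$ to be the exponential of a function of $x'$ only, hence $\psi|_{\partial\mathbb{R}^4_+}$ solves a second-order (in $x'$) Liouville-type equation on $\mathbb{R}^3$; the classical classification of finite-mass solutions of $-\Delta_{x'}w = e^{w}$-type equations (with the normalization forced by the constants appearing) then pins down $\psi$ on the boundary, and the biharmonic extension with the Neumann condition determines it in the interior, giving exactly the stated formula. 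I would finally verify directly that the displayed $\psi$ indeed satisfies all four conditions, which is a routine but bookkeeping-heavy computation that also fixes the numerical constants $(\pi/2)^{2/3}$, $(\pi/2)^{4/3}$, $2^{-8/3}\pi^{-4/3}$.

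The main obstacle, I expect, is the classification part rather than the convergence: one must rule out unbounded or oscillatory contributions to $h=\Delta\psi$ from harmonic functions on the half-space that are compatible with $\partial_t\psi=0$ but not with the finite boundary mass $\int_{\partial\mathbb{R}^4_+}e^{24\pi^2\psi}dx' < \infty$ and the sublinear-on-average growth $\int_{B_R^+}|\Delta\psi| \le CR^2$. In the first-order (trace Trudinger--Moser) setting the analogous classification reduces to a genuinely second-order Liouville equation on $\mathbb{R}^2$ or $\mathbb{R}^{n-1}$ with well-known solutions; here the biharmonic structure means $\psi$ is not harmonic, only $\Delta\psi$ is, so one has to carefully split $\psi$ into its "logarithmic Liouville part" on the boundary and a "linear-in-$t$ correction" and show no other pieces can appear — this is where the local estimate \eqref{addlo}/Lemma \ref{lem5} does the essential work, controlling $\Delta\psi$ well enough to force the rigid form. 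A secondary technical point is justifying the passage of the growth bound and the boundary-mass bound to the limit and handling the behaviour of $\psi_k$ all the way up to the flat boundary $\partial\mathbb{R}^4_+$, which requires the Neumann elliptic regularity (Lemma \ref{regu}) uniformly in $k$.
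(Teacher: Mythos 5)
Your convergence step is essentially the same as the paper's: rescale the Euler--Lagrange system, use the local growth bound from Lemma \ref{lem5} and Neumann elliptic regularity (applied twice, to the factored system $-\Delta\psi_k = v_k$, $-\Delta v_k = c_k r_k^4\gamma_k$), get $C^{3,\alpha}$ bounds via an even reflection of $\psi_k$ across the flat boundary, and pass to the limit.

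The classification step, however, has a genuine gap, and it is exactly in the place you flagged as ``the main obstacle.'' You observe that $h:=\Delta\psi$ is harmonic in $\mathbb{R}^4_+$ and propose that the growth bound forces $h(x',t)=-a-bt$ with constants. This is false: for the solution $\psi$ in the statement, writing $y=(x',t+\mu^{-1})$ with $\mu=(\pi/2)^{2/3}$, one computes $\Delta\psi = -\frac{1}{2\pi^2|y|^2} - \frac{y_4}{\pi^2\mu|y|^4}$, which decays at infinity and is nowhere close to affine. More fundamentally, $h$ is harmonic only on a half-space, so no Liouville rigidity applies; and even on $\mathbb{R}^4$, a bound of the form $\int_{B_R}|h|\le CR^2$ would only force $h$ to be affine in all four variables, and the Neumann condition $\partial_t\psi=0$ gives no direct constraint on $\partial_t\Delta\psi$. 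You then try to patch this by letting $b$ depend on $x'$, which contradicts your own ansatz, and you land on a ``second-order Liouville equation $-\Delta_{x'}w=e^w$ on $\mathbb{R}^3$,'' which is not what the reduction produces. The boundary operator is nonlocal: since $\psi$ turns out to be the biharmonic Poisson extension of its trace $\psi_0$ (with the Neumann condition $\partial_t\psi|_{t=0}=0$), the Neumann datum satisfies $\tfrac12\,\partial_t\Delta\psi|_{t=0} = (-\Delta_{x'})^{3/2}\psi_0$, so the correct reduced equation is the \emph{fractional} Liouville equation $(-\Delta)^{3/2}\psi_0 = \tfrac12 e^{24\pi^2\psi_0}$ on $\mathbb{R}^3$ with finite mass. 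That equation is not classical; its classification is Hyder's theorem \cite{Hyder}, which produces $\psi_0 = v+p$ with $p$ a polynomial of degree at most $2$, and the growth bound on $\int_{B_R^+}|\Delta\psi|$ is then what forces $p$ to be constant, pinning down $\psi_0$ and hence $\psi$.

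Two missing ingredients, then: (i) the observation that $\psi$ must agree with its biharmonic Poisson extension (proved by showing the difference $w=\psi-\phi$ is a biharmonic function on $\mathbb{R}^4_+$ with zero Dirichlet and Neumann trace whose $\int_{B_R^+}|\Delta w|$ grows at most like $R^2$, hence $w\equiv 0$); and (ii) the identification of the boundary operator as $(-\Delta)^{3/2}$ on $\mathbb{R}^3$ plus Hyder's classification. Without these, there is no route from ``$\Delta\psi$ is harmonic with controlled growth'' to the explicit formula.
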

\begin{proof}
By (\ref{fen}), we can easily obtain
\begin{equation}\label{main}
\begin{cases}
 \Delta^2 \psi_{k}={c_kr^4_k}\gamma_k, & \forall x\in B_R(0)\bigcap \Omega_k, \\
  \frac{\partial\psi_k}{\partial t}=0, & \forall x\in B_R(0)\bigcap\partial \Omega_k,\\
    \frac{\partial\Delta \psi_{k}}{\partial t}=\frac{u_k\exp(\alpha_k \psi_k(1+\frac{u_k}{c_k}))}{c_k}, & \forall x\in B_R(0)\bigcap\partial \Omega_k,
\end{cases}
\end{equation}
for any $R>0$. Let $-\Delta \psi_k=v_k$, then $\psi_k$ and $v_k$ satisfy the following equation respectively:
\begin{equation}\begin{cases}
-\Delta \psi_k=v_k, & \forall x\in B_R(0)\cap \Omega_k,\\
 \frac{\partial\psi_k}{\partial t}=0,& \forall x\in B_R(0)\bigcap\partial \Omega_k;
\end{cases}\end{equation}
and
\begin{equation}\begin{cases}
-\Delta v_k={c_kr^4_k}\gamma_k, & \forall x\in B_R(0)\cap \Omega_k,\\
 \frac{\partial v_k}{\partial t}=\frac{u_k\exp(\alpha_k \psi_k(1+\frac{u_k}{c_k}))}{c_k}, & \forall x\in B_R(0)\bigcap\partial \Omega_k.
\end{cases}\end{equation}
Noticing $$\frac{\partial v_k}{\partial t}=\frac{u_k\exp(\alpha_k \psi_k(1+\frac{u_k}{c_k}))}{c_k}\in L^{\infty}(B_R(0)\cap\partial \Omega_k),$$ applying Lemma \ref{lem5} and the standard elliptic regularity, we deduce that
$$\|v_k\|_{C^{1,\alpha}(\overline {B_{R/2}(0)\cap \Omega_k})}\lesssim 1.$$
 Then there exists some $v\in C^{1,\alpha}(\overline {B_{R/2}(0)\cap \Omega_k})$ such that
$v_k\rightarrow v$ in $C^{1,\beta}(\overline {B_{R/2}(0)\cap \Omega_k})$ for any $\beta<\alpha$. Let $\widetilde{\psi_k(x)}$ be the even extension of
$\psi_k$ with respect to the boundary $\partial B^{+}_R(0)\cap\partial \mathbb{R}^4_{+}$, then we have $-\Delta \widetilde{\psi_k}\in C^{1, \alpha}(B_{R}(0)\cap \Omega_k)$, $\widetilde{\psi_k(x)}\leq \psi_k(0)=0$. Using the Harnack inequality and elliptic regularity estimates, we get
$\|\widetilde{\psi_k}\|_{C^{3,\alpha}(B_{R}(0)\cap \Omega_k)}\lesssim C$. Hence there exists $\psi \in C^{3,\beta} (\overline {B_{R}(0)\cap \Omega_k}) $ such that
$\psi_k\rightarrow \psi $ in $C^{3,\beta}(\overline {B_{R}(0)\cap \Omega_k})$ for any $\beta<\alpha$, where $\psi$ satisfies the equation
\begin{equation*}
\begin{cases}
 &\Delta^2\psi=0 \text{ in } \mathbb{R}_{+}^{4},\\
&\frac{\partial\Delta\psi}{\partial t}=\exp(24\pi^2 \psi) \text{ on } \partial\mathbb{R}_{+}^{4}, \\
&\psi(0)=\sup \psi=0,\\
& \frac{\partial\psi}{\partial t}=0, \text{ on } \partial\mathbb{R}_{+}^{4}.
\end{cases}
\end{equation*}

From (\ref{rechoose}), it is not difficult to see that \begin{equation}\int_{B_R\bigcap \partial \mathbb{R}^4_{+}}\exp\left(24\pi^2\psi\right)\leq-\int_{B_{Rr_k}\bigcap \partial\Omega}\frac{u_k^2\exp\left(\alpha_ku^2_k\right)}{\lambda_k}\leq1.\end{equation}
Next, we will prove that $\psi$ must take the form as
$$\psi=-\frac{1}{8\pi^2}\log\left( \left(1+\left({\frac{\pi}{2}}\right)^{\frac{2}{3}}t   \right)^2+  \left({\frac{\pi}{2}}\right)^{\frac{4}{3}}|x'|^2   \right)+\frac{1}{2^{\frac{8}{3}}\pi^{\frac{4}{3}}}\frac{t}{  \left(1+\left({\frac{\pi}{2}}\right)^{\frac{2}{3}}t  \right)^2+  \left({\frac{\pi}{2}}\right)^{\frac{4}{3}}|x'|^2   }.$$
Indeed, let $\phi(x)=\int_{\partial \mathbb{R}^{+}_{4}}P(x,y')\psi(y', 0)dy'$, where $x=(x',t)$, $y=(y',t)$ and $P(x,y')=\frac{4}{\pi^2}\frac{t^3}{|x-y'|^6}$ is the Poisson kernel for the bi-laplace operator on the upper half space. It is not difficult to check that $\phi$ satisfies the following equation
\begin{equation}\begin{cases}
(-\Delta)^2\phi=0,\ & x\in \mathbb{R}^{4}_{+},\\
\phi=\psi(x),\ & x\in \partial \mathbb{R}^{4}_{+},\\
\frac{\partial \phi}{\partial t}=0, \  & x\in \partial \mathbb{R}^{4}_{+};\\
\end{cases}\end{equation}
and $\int_{B_R^{+}(0)}|\Delta \phi|dx\leq CR^2$.
\vskip0.1cm

Let $w=\psi-\phi$, then $w$ satisfies

\begin{equation}\begin{cases}
(-\Delta)^2w=0,\ & x\in\mathbb{ R}^{4}_{+},\\
w=0, \  & x\in \partial \mathbb{R}^{4}_{+},\\
\frac{ \partial w}{\partial t}=0,\ & x\in \partial \mathbb{R}^{4}_{+}.\\
\end{cases}\end{equation}
Noticing $\int_{B_R^{+}(0)}|\Delta w|dx\leq \int_{B_R^{+}(0)}|\Delta \psi|dx+\int_{B_R^{+}(0)}|\Delta \phi|dx\leq CR^2$, one can deduce that $w$ must be equal to zero. Hence $\psi(x)=\int_{\partial \mathbb{R}^{+}_{4}}P(x,y')\psi(y', 0)d\xi$.
Set $\psi_0(x')=\psi(x',0)$, then we know that $\frac{1}{2}\frac{\partial \Delta \psi}{\partial t}|_{\partial \mathbb{R}^{4}_{+}}=(-\Delta)^{\frac{3}{2}}\psi_0$ and $\psi_0(x')$ satisfies the following equation in the distributional sense:
\begin{equation}\begin{cases}
(-\Delta)^{\frac{3}{2}}\psi_0=\frac{1}{2}e^{24\pi^2\psi_0},  x'\in \partial\mathbb{ R}^{3}\\
\int_{\mathbb{ R}^3}e^{24\pi^2\psi_0(x')}dx'\leq \frac{1}{2}.\\
\end{cases}\end{equation}
Let $\eta_0(x')=8\pi^2\psi_0(x')+\frac{1}{3}\log(2\pi^2)$, then $\eta_0$ satisfies
\begin{equation}\begin{cases}
(-\Delta)^{\frac{3}{2}}\eta_0=2e^{3\eta_0},\ x'\in \mathbb{ R}^3,\\
\int_{\mathbb{ R}^3}e^{3\eta_0}dx'\leq \pi^2.\\
\end{cases}\end{equation}
\vskip0.1cm

From the result of Hyder (\cite{Hyder}), we known that $\eta_0(x')$ can be decomposed as $\eta_0=v+p$, where $p$ is a polynomial of degree at most $2$ and $v(x')=-\alpha \log(|x'|)+o(\log |x'|)$ as $|x'|\rightarrow +\infty$. Furthermore, $\eta_0(x')=\log{\frac{2\lambda}{1+\lambda^2|x'-x'_0|^2}}$ if and only if $p$ is a constant. Noticing that  $\psi(x',t)$ is a Poisson extension of $\psi_0$ on $\mathbb{R}^{4}_{+}$ and $\int_{B_{R}^{+}}|\Delta \psi|dx\leq CR^2$, then we deduce that $p$ must be equal to constant. This proves $\eta_0(x')=\log{\frac{2\lambda}{1+\lambda^2|x'-x'_0|^2}}$. Since $\psi(x)\leq \psi(0)=\sup_{x\in \mathbb{R}^{4}_{+}}\psi(x)=0$, hence $\psi$ must take the form as
$$\psi(x',t)=-\frac{1}{8\pi^2}\log\left( \left(1+\left({\frac{\pi}{2}}\right)^{\frac{2}{3}}t   \right)^2+  \left({\frac{\pi}{2}}\right)^{\frac{4}{3}}|x'|^2   \right)+\frac{1}{2^{\frac{8}{3}}\pi^{\frac{4}{3}}}\frac{t}{  \left(1+\left({\frac{\pi}{2}}\right)^{\frac{2}{3}}t  \right)^2+  \left({\frac{\pi}{2}}\right)^{\frac{4}{3}}|x'|^2   },$$
where the second term ensures $\frac{\partial\psi}{\partial t}\big|_{\partial \mathbb{R}^4_+}=0$. By an easy computation, one can see that \begin{equation}\label{equto1}\int_{\partial \mathbb{R}^4_{+}}\exp\left(24\pi^2\psi(x')\right)dx'=1.\end{equation}

\end{proof}

\subsection{Polyharmonic truncation functions}
We first introduce some notations. If $x_0\in\partial \Omega$, for small $\delta>0$, denote by $M_{\delta,x_0}=B_\delta(x_0)\bigcap \overline{\Omega}$. We can choose a Fermi coordinate  (see \cite{MM}) for
$M_{\delta,x_0}$ by the map $
\theta: M_{\delta} \rightarrow \overline{{B}_{\delta}^{3}(0)}\times[0, \delta]
$, where $\theta(x_0)=0$. We will identify $M_{\delta,x_0}$ with $\overline{{B}_{\delta}^{3}(0)} \times$ $[0, \delta]$ through the map $\theta$.
Under the Fermi coordinate, we can write the metric on the $M_{\delta,p}$ by $g=g_{i j} d x_{i} \otimes d x_{j}+d t \otimes d t(i,j\in \,\{1,2,3\})$, where $(1-\epsilon)\delta_{i,j}\leq g_{ij}\leq (1+\epsilon)\delta_{i,j}$ for small $\epsilon>0$.
\vskip0.1cm

We choose a Fermi coordinate system $(U_k,\theta_k)$ near the point $x_k$  such that $\theta_k(x_k)=0$, and $\theta_k(U_k\cap \Omega)\subseteq \mathbb{R}_+^4=\{x=(x',t)\in \mathbb{R}^4:t>0\}$, and $\theta_k(U_k\bigcap\partial\Omega)\subseteq \partial \mathbb{R}_+^4$. In the following, we make an even extension for $u_k\circ\theta_k^{-1}$ in the direction of $t$ under the Fermi coordinate system $(U_k,\theta_k)$:
\begin{equation*}
\begin{cases}
 &\tilde{u}_{k}(x)= u_k\circ\theta_k^{-1}(x',t), \text{if } t\geq0, \\
 &\tilde{u}_{k}(x)= u_k\circ\theta_k^{-1}(x',-t), \text{if } t<0,
\end{cases}
\end{equation*}
then $\tilde{u}_{k}(x)\in W^{2,2}(B_r(0))$ with $\parallel \Delta\tilde{u}_{k}\parallel_{L^2(B_r(0))}=2\parallel \Delta \tilde{u}_{k}\parallel_{L^2(B_r^{+}(0))}$ for  small $r>0$.
\vskip 0.1cm

Now, we  need some bi-harmonic truncation
functions $\tilde{u}_{k}^{M}$ which was studied in \cite{DelaTorre}. Roughly speaking, the
value of the truncations functions $\tilde{u}_{k}^{M}$ are close to $\frac{c_{k}}{M}$
in a small neighborhood of $0$, and coincides with $\tilde{u}_{k}$ outside the
same neighborhood.

\begin{lemma}
\label{truncation}\cite[Lemma 4.20]{DelaTorre}For any $M>1$ and $k\in%
\mathbb{N}
$, there exists a radius $\tilde{\rho}_{k}^{M}>0$ and a constant $c=c\left(  M\right)
$ such that

1. $\tilde{ u}_{k}\geq\frac{c_{k}}{M}$ in $B_{\tilde{\rho}_{k}^{M}}\left( 0\right)  ;$

2.$\left\vert \tilde{u}_{k}-\frac{c_{k}}{M}\right\vert \leq\frac{c}{c_{k}}$ on
$\partial B_{\tilde{\rho}_{k}^{M}}\left(  0\right)   ;$

3.$\left\vert \nabla^{l}\tilde{u}_{k}\right\vert \leq\frac{c}{c_{k}\left(  \tilde{\rho}
_{k}^{M}\right)  ^{l}}$ on $\partial B_{\tilde{\rho}_{k}^{M}}\left(  0\right)  $
for any $1\leq l\leq3;$

4. $\tilde{\rho}_{k}^{M}\rightarrow0$, and $\frac{\tilde{\rho}_{k}^{M}}{r_{k}}\rightarrow
+\infty$, as $k\rightarrow\infty.$
\end{lemma}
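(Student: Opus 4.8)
The plan is to take $\tilde\rho_k^M$ to be (essentially) the radius of the level set $\{\tilde u_k=c_k/M\}$ and to control $\tilde u_k$ and its derivatives on the corresponding sphere by means of a sharp logarithmic expansion of $u_k$ valid throughout the whole intermediate (``neck'') region. The first step is to record the far field of the limit profile: from the explicit formula for $\psi$ in Lemma~\ref{limiteq} one has $\psi(x)=-\frac{1}{4\pi^2}\log|x|+O(1)$ as $|x|\to\infty$. Since $\psi_k\to\psi$ in $C^3_{loc}(\overline{\mathbb R^4_+})$ and $u_k(x_k+r_kx)=c_k+c_k^{-1}\psi_k(x)$, it follows that for every fixed large $L$ and all $k\ge k(L)$,
\[
u_k(x)=c_k+\frac{1}{c_k}\Big(-\frac{1}{4\pi^2}\log\frac{|x-x_k|}{r_k}+O(1)\Big),\qquad |\nabla^l u_k(x)|\le\frac{C}{c_k(Lr_k)^l}\quad(1\le l\le 3),
\]
on the sphere $|x-x_k|=Lr_k$, with the $O(1)$ and the constant $C$ independent of $L$; transported through the (nearly Euclidean) Fermi chart and the even reflection, the same holds for $\tilde u_k$ on $\partial B_{Lr_k}(0)$.

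The heart of the matter --- and the step I expect to be the main obstacle --- is to propagate this expansion outward to all radii $\rho$ with $Lr_k\le\rho\le\delta$. This is genuinely harder than the second order case treated in \cite{Liliu,Yang}: $\Delta^2$ satisfies no maximum principle, so $u_k$ cannot be squeezed between explicit radial sub/supersolutions, and the boundary datum on $B_\delta(x_k)\cap\partial\Omega$ does not vanish. I would instead invoke the Green representation for the fourth order operator satisfied by the even extension $\tilde u_k$ in the Fermi chart on $B_\delta(0)$, whose right hand side is bounded in the interior and carries, on $\{t=0\}$, the singular term produced by $\frac{\partial\Delta u_k}{\partial\nu}=\frac{u_ke^{\alpha_ku_k^2}}{\lambda_k}$. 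Two facts make this work: the mass of this boundary term outside $B_{Lr_k}$ is $o_L(1)$, by the concentration identity \eqref{equto1} combined with $-\int_{\partial\Omega}\frac{u_k^2e^{\alpha_ku_k^2}}{\lambda_k}\,d\sigma=1$; and the Riesz-type convolution tails are dominated by $\int_{B_\rho(x_k)\cap\Omega}|u_k\Delta u_k|\,dx\le C\rho^2$, i.e.\ the Lorentz estimate of Lemma~\ref{lem5}, together with the $C^3$-decay of $u_k$ away from $p$ from Lemma~\ref{concen}. Since $u_k\ge c_k/M$ on the neck, the $u_k^2$-weighted bound on the boundary term upgrades to an unweighted $L^1$ bound with a gain $c_k^{-1}$, which is precisely what produces the $c_k^{-1}$ in front of the logarithm. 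Differentiating the representation formula (equivalently, rescaling interior Schauder estimates at each dyadic annulus) then yields, uniformly on $B_\delta(x_k)\setminus B_{Lr_k}(x_k)$,
\[
u_k(x)=c_k+\frac{1}{c_k}\Big(-\frac{1}{4\pi^2}\log\frac{|x-x_k|}{r_k}+O(1)\Big),\qquad |\nabla^l u_k(x)|\le\frac{C}{c_k|x-x_k|^l}\quad(1\le l\le 3);
\]
the lone delicate point is the third $t$-derivative across $\partial\Omega$, whose jump $O\big(c_ke^{\alpha_kc_k^2/M^2}\big)$ is still $\ll (\tilde\rho_k^M)^{-3}/c_k$ exactly because $\alpha_k/M<12\pi^2$.

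It then remains to fix the radius and read off items 1--4. I would set $\tilde\rho_k^M=\sup\{\rho\le\delta:\ \tilde u_k\ge c_k/M\text{ on }B_\rho(0)\}$. Item~1 is immediate, since $\tilde u_k=c_k(1+o_k(1))$ on $B_{Lr_k}(0)$ by $\phi_k\to1$; and substituting the uniform neck expansion into the definition forces
\[
\tilde\rho_k^M=r_k\exp\!\big(4\pi^2c_k^2(1-\tfrac1M)+O(1)\big).
\]
Because $r_k^3=-\lambda_kc_k^{-2}e^{-\alpha_kc_k^2}$ with $-\lambda_k$ bounded and bounded away from $0$ (Lemma~\ref{lamb-gama}) and $\alpha_k\to12\pi^2$, this gives $\tilde\rho_k^M\asymp c_k^{-2/3}\exp(-\tfrac{4\pi^2}{M}c_k^2)\to0$ and $\tilde\rho_k^M/r_k\to+\infty$, which is item~4; in particular $\tilde\rho_k^M\le\delta$ for large $k$, so the neck expansion is valid on $\partial B_{\tilde\rho_k^M}(0)$. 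Evaluating it there gives $\log\frac{\tilde\rho_k^M}{r_k}=4\pi^2c_k^2(1-\tfrac1M)+O(1)$, hence $\tilde u_k=\frac{c_k}{M}+O(1/c_k)$ on $\partial B_{\tilde\rho_k^M}(0)$, which is item~2 with $c=c(M)$, and the differentiated bound evaluated at $|x-x_k|=\tilde\rho_k^M$ is exactly item~3. In summary, all the difficulty is concentrated in the neck expansion of the second paragraph; once that is in place, items 1--4 reduce to bookkeeping with the formula for $r_k$.
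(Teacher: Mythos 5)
The paper does not prove this lemma; it simply cites \cite[Lemma~4.20]{DelaTorre} (which treats the interior Navier--BC Adams problem) and applies it to the even reflection $\tilde u_k$ without spelling out the adaptation, so there is no in-paper argument to compare against. Your overall scheme --- define $\tilde\rho_k^M$ through the level set $\{\tilde u_k\ge c_k/M\}$, use $\phi_k\to1$ for item~1, and read item~4 off the identity $\log\frac{\tilde\rho_k^M}{r_k}=4\pi^2c_k^2(1-\tfrac1M)+O(1)$ combined with $r_k^3=-\lambda_kc_k^{-2}e^{-\alpha_kc_k^2}$ --- is reasonable, and the arithmetic for item~4 is correct given the two-sided matching. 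To my knowledge, however, DelaTorre--Mancini do not prove a global pointwise neck expansion; they obtain items 2--3 from sphere-by-sphere oscillation and derivative estimates derived from a localized Green/Pohozaev identity, which is both weaker and, importantly, provable.

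The global neck expansion you posit in the second paragraph is a genuine gap, because as stated it is false. You claim $u_k=c_k+c_k^{-1}\bigl(-\tfrac{1}{4\pi^2}\log\tfrac{|x-x_k|}{r_k}+O(1)\bigr)$ with a \emph{uniform} $O(1)$ on all of $B_\delta\setminus B_{Lr_k}$. Forcing this to match $c_ku_k\to G$ on $\partial B_\delta(x_k)$, and substituting $\log r_k=\tfrac13\log\tfrac{-\lambda_k}{c_k^2}-\tfrac{\alpha_k}{3}c_k^2$, one finds the $O(1)$ must absorb $c_k^2\bigl(1-\tfrac{\alpha_k}{12\pi^2}\bigr)$ (the remaining term $\tfrac{1}{12\pi^2}\log\tfrac{-\lambda_k}{c_k^2}$ is harmless by Lemma~\ref{concentre}). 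But $\alpha_k\uparrow 12\pi^2$ is an arbitrary sequence in this paper and no rate is established, so $c_k^2(1-\alpha_k/12\pi^2)$ is not known to be bounded at this stage; you would be assuming a quantitative estimate on the approach of $\alpha_k$ to criticality that is actually a by-product of the subsequent capacity argument, not an input to this lemma. In fact the single inner expansion you want cannot be valid across the whole neck: the matching constants at $Lr_k$ (coming from $\psi$) and at $\delta$ (coming from $G$) differ precisely by $c_k^2(1-\alpha_k/12\pi^2)+O(1)$, and the transition occurs inside the neck. Fortunately the lemma only requires estimates at the single radius $\tilde\rho_k^M$, which is exponentially small compared to $\delta$, so the fix is to restrict to $[Lr_k,\tilde\rho_k^M]$ and work with derivative bounds $|\nabla^l u_k|\le C(c_k\rho^l)^{-1}$ rather than a full expansion. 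Two further steps are treated as routine but are not: (i) the bound $|\nabla^3u_k|\lesssim(c_k\rho^3)^{-1}$ does not follow directly from $f_k\in L^1(\partial\Omega)$ since $\nabla^3_xG(x,\cdot)\sim|x-y|^{-3}$ is borderline against a density on the $3$--manifold $\partial\Omega$; you need the $L\log^{1/2}L$ control and a dyadic annulus decomposition as in Lemma~\ref{lem5}, not a naive kernel estimate; (ii) your use of $u_k\ge c_k/M$ to upgrade the weighted boundary mass bound to $L^1$ is circular as written, since that inequality holds only up to the radius $\tilde\rho_k^M$ you are constructing, and must be organized as a continuation argument in $\rho$.
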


Let $\tilde{v}_{k}^{M}\in C^{4}\left( (\overline{B_{\tilde{\rho}_{k}^{M}}\left(  0\right)
})\right)$ be the unique solution of
\begin{equation}\label{eqne}
\genfrac{\{}{.}{0pt}{0}{\Delta^{2}(\tilde{v}_{k}^{M})%
=0\ \ \ \ \ \ \ \ \ \ \ \ \ \ \ \ \text{\ in }B_{\tilde{\rho}_{k}^{M}}\left(
0\right) ,}{\partial_{\nu}^{i}(\tilde{v}_{k}^{M})=\partial_{\nu}^{i}(\tilde{u}_{k})   \ \ \ \ \ \ \text{on
}\partial B_{\tilde{\rho}_{k}^{M}}\left( 0\right) ,i=0,1.}%
\end{equation}

We consider the function
\begin{equation}\label{test}
u_{k}^{M}=%
\genfrac{\{}{.}{0pt}{0}{\tilde{v}_{k}^{M}\circ \theta_{k}\ \ \ \ \ \ \ \ \ \text{\ in }\theta_k^{-1}(B_{\tilde{\rho}_{k}^{M}}\left(
0\right)),}{ u_k\ \ \ \ \ \ \ \ \ \ \ \  \ \ \text{in }\Omega\backslash \theta_k^{-1}(B_{\tilde{\rho}_{k}^{M}%
}\left( 0\right))  .}%
\end{equation}

\begin{lemma}
\cite[Lemma 4.21]{DelaTorre}For any $M>1$, we have
\[
u_{k}^{M}=\frac{c_{k}}{M}+O\left(  c_{k}^{-1}\right)  ,
\]
uniformly on $\theta_k^{-1}(\overline{B_{\tilde{\rho}_{k}^{M}}\left(0\right)  })$.
\end{lemma}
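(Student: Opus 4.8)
The plan is to reduce the statement to an a priori bound for the biharmonic Dirichlet problem on a ball and transport it through the Fermi chart. Since $\theta_k$ is a diffeomorphism onto its image and, by the definition \eqref{test}, $u_k^M=\tilde v_k^M\circ\theta_k$ on the set $\theta_k^{-1}(\overline{B_{\tilde\rho_k^M}(0)})$, pulling back by a fixed‑regularity diffeomorphism preserves $L^\infty$ bounds, so it suffices to prove $\tilde v_k^M=\frac{c_k}{M}+O(c_k^{-1})$ uniformly on $\overline{B_{\tilde\rho_k^M}(0)}$. I set $w_k:=\tilde v_k^M-\frac{c_k}{M}$. Since constants are biharmonic, $w_k$ solves $\Delta^2 w_k=0$ in $B_{\tilde\rho_k^M}(0)$, and by \eqref{eqne} its Cauchy data on $\partial B_{\tilde\rho_k^M}(0)$ are $w_k=\tilde u_k-\frac{c_k}{M}$ and $\partial_\nu w_k=\partial_\nu\tilde u_k$. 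Parts 2 and 3 (with $l=1$) of Lemma \ref{truncation} then give
\[
\|w_k\|_{L^\infty(\partial B_{\tilde\rho_k^M}(0))}\le \frac{c}{c_k},\qquad \|\partial_\nu w_k\|_{L^\infty(\partial B_{\tilde\rho_k^M}(0))}\le \frac{c}{c_k\,\tilde\rho_k^M}.
\]

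Next I rescale to a fixed domain: put $\bar w_k(y):=w_k(\tilde\rho_k^M\,y)$ for $y\in B_1(0)$. Then $\Delta^2\bar w_k=0$ in $B_1(0)$, with $\|\bar w_k\|_{L^\infty(\partial B_1)}\le c/c_k$ and $\|\partial_\nu\bar w_k\|_{L^\infty(\partial B_1)}=\tilde\rho_k^M\|\partial_\nu w_k\|_{L^\infty(\partial B_{\tilde\rho_k^M})}\le c/c_k$. The biharmonic Dirichlet problem on the unit ball is uniquely solvable and admits explicit Poisson‑type kernels (equivalently, one may use positivity of the Boggio Green function or plain interior‑plus‑boundary elliptic estimates for $\Delta^2$), which yield the a priori bound $\|\bar w_k\|_{L^\infty(B_1)}\le C\big(\|\bar w_k\|_{L^\infty(\partial B_1)}+\|\partial_\nu\bar w_k\|_{L^\infty(\partial B_1)}\big)\le C/c_k$. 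Undoing the scaling gives $\|w_k\|_{L^\infty(B_{\tilde\rho_k^M}(0))}\le C/c_k$, i.e. $\tilde v_k^M=\frac{c_k}{M}+O(c_k^{-1})$ uniformly on $\overline{B_{\tilde\rho_k^M}(0)}$, and composing with $\theta_k$ yields the claim, exactly as in \cite[Lemma 4.21]{DelaTorre}.

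I do not expect a genuine difficulty here: the Fermi coordinates only introduce a metric with $(1-\epsilon)\delta_{ij}\le g_{ij}\le(1+\epsilon)\delta_{ij}$, so the relevant operator stays uniformly elliptic with nearly constant coefficients and all constants remain $k$‑independent. The only point that needs a moment's care — and the reason a bare comparison argument would not suffice — is that $\Delta^2$ has no maximum principle on a general domain; on the ball, however, the explicit kernels (or positivity of the Boggio Green function) restore precisely the boundary‑data estimate needed, after which the argument is just scaling.
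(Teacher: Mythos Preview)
Your argument is correct and is precisely the standard one: the paper does not supply its own proof here but simply cites \cite[Lemma~4.21]{DelaTorre}, whose argument is exactly the subtract--rescale--Boggio route you wrote out. One minor remark: since $\tilde v_k^M$ in \eqref{eqne} is \emph{defined} via the flat Euclidean $\Delta^2$ on the ball, the Fermi metric plays no role in this step and your closing caveat about variable coefficients is unnecessary (though harmless).
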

\begin{remark}\label{remark}
Using  the explicit form of the Green function of $\Delta^2$ on balls, namely the Boggio's formula,  in \cite{Boggio}) and the representation formula of solutions for \eqref{eqne}, one can see that  $\frac{\partial u_{k}^{M}}{\partial \nu}=0$, for any $x\in \theta_k^{-1}(B_{\tilde{\rho}^{M}_{k}}(0))\bigcap \partial\Omega$.

\end{remark}
\begin{lemma}
\label{cufoff mod}For any $M>1$, there holds
\[
\underset{k\rightarrow\infty}{\lim\sup}\int_{%
\Omega}  \left\vert \Delta u_{k}^{M}\right\vert ^{2}  dx\leq\frac{1}{M}.
\]

\end{lemma}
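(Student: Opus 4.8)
The plan is to split the energy $\int_\Omega |\Delta u_k^M|^2\,dx$ according to the definition \eqref{test} of $u_k^M$: outside the small ball $\theta_k^{-1}(B_{\tilde\rho_k^M}(0))$ one has $u_k^M = u_k$, so that part of the energy is $\int_{\Omega\setminus\theta_k^{-1}(B_{\tilde\rho_k^M}(0))}|\Delta u_k|^2\,dx$, which is at most $1 = \|\Delta u_k\|_2^2$; the real work is to show that, after subtracting the contribution of the tiny ball, what survives is only $1/M + o_k(1)$. More precisely, I would write
\[
\int_\Omega |\Delta u_k^M|^2\,dx = \int_{\Omega\setminus\theta_k^{-1}(B_{\tilde\rho_k^M})}|\Delta u_k|^2\,dx + \int_{\theta_k^{-1}(B_{\tilde\rho_k^M})}|\Delta u_k^M|^2\,dx,
\]
and then pass to the even-extended picture on $B_{\tilde\rho_k^M}(0)\subset\mathbb{R}^4$, where $u_k^M$ corresponds (up to the metric distortion $(1\pm\epsilon)\delta_{ij}$ of the Fermi coordinates) to the bi-harmonic extension $\tilde v_k^M$ solving \eqref{eqne}.

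First I would estimate the ball term. Since $\tilde v_k^M$ is the unique bi-harmonic function with the same zeroth- and first-order Cauchy data as $\tilde u_k$ on $\partial B_{\tilde\rho_k^M}(0)$, it is the Dirichlet-energy minimizer among all $W^{2,2}$ functions with those boundary data; hence $\int_{B_{\tilde\rho_k^M}}|\Delta \tilde v_k^M|^2 \le \int_{B_{\tilde\rho_k^M}}|\Delta\tilde u_k|^2$. This already shows the ball term is dominated by $\int_{\theta_k^{-1}(B_{\tilde\rho_k^M})}|\Delta u_k|^2\,dx$ up to the $(1\pm\epsilon)$ metric factors, but that only recovers the trivial bound $\le 1$. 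To get $1/M$ I need the sharper estimate on $\int_{B_{\tilde\rho_k^M}}|\Delta\tilde u_k|^2$ coming from the blow-up analysis: on the ball $B_{\tilde\rho_k^M}$ one has $u_k \approx (c_k/M)(1 + \text{small})$ in the sense of Lemma 3.7 and the truncation Lemma \ref{truncation}, and the energy carried by $u_k$ inside $B_{\tilde\rho_k^M}$ is asymptotically $(1/M^2)$ times the total blowing-up energy — but that alone gives $1/M^2$, not $1/M$, so the correct accounting must instead compare $\int_{\Omega\setminus B_{\tilde\rho_k^M}}|\Delta u_k|^2$ with the Dirichlet-minimizing bi-harmonic function on the complement, using capacity-type lower bounds: since $u_k^M = u_k$ outside and equals the harmonic-type profile reaching the value $c_k/M$ at $\partial B_{\tilde\rho_k^M}$, the outer energy already accounts for $(1 - 1/M) + o_k(1)$ of the unit total, leaving $\le 1/M + o_k(1)$ for the part we keep. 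The cleanest route is: $\int_\Omega|\Delta u_k^M|^2 = \int_\Omega|\Delta u_k|^2 - \int_{B_{\tilde\rho_k^M}}|\Delta u_k|^2 + \int_{B_{\tilde\rho_k^M}}|\Delta \tilde v_k^M|^2$, and then bound the first difference below and the last term above.

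I expect the main obstacle to be showing that $\int_{\theta_k^{-1}(B_{\tilde\rho_k^M})}|\Delta u_k|^2\,dx \ge 1 - \tfrac{1}{M} - o_k(1)$, i.e. that almost all of the concentrating Dirichlet energy is captured inside the truncation ball $B_{\tilde\rho_k^M}$. This requires combining three ingredients: the energy-concentration statement of Lemma \ref{concen}(i) ($|\Delta u_k|^2 \to \delta_p$), the fact from Lemma \ref{truncation} that $\tilde\rho_k^M/r_k \to \infty$ so that $B_{\tilde\rho_k^M}$ engulfs the whole "bubble" region where $\psi_k \to \psi$ converges, and the normalization $\int_{\partial\mathbb{R}^4_+}\exp(24\pi^2\psi) = 1$ from \eqref{equto1} together with the capacity/Dirichlet-energy lower bound for the bi-harmonic comparison function on the half-annulus — this last being exactly the ingredient advertised in the introduction as the delicate point (the subsection "Polyharmonic truncation functions" and the Neck subsection). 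For the upper bound on $\int_{B_{\tilde\rho_k^M}}|\Delta\tilde v_k^M|^2$ I would use the explicit Boggio representation (Remark \ref{remark}) to write $\tilde v_k^M$ via its Cauchy data on $\partial B_{\tilde\rho_k^M}$, and then plug in the decay estimates (2) and (3) of Lemma \ref{truncation}, namely $|\tilde u_k - c_k/M| \le c/c_k$ and $|\nabla^l \tilde u_k| \le c/(c_k (\tilde\rho_k^M)^l)$, to conclude that this term is $O(c_k^{-2}) = o_k(1)$; the bi-harmonic function built purely from boundary data of size $O(c_k^{-1})$ and its derivatives has negligible Dirichlet energy. Assembling: $\limsup_k \int_\Omega|\Delta u_k^M|^2 \le 1 - (1 - 1/M) + 0 = 1/M$, as claimed.
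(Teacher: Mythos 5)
Your algebraic decomposition is the right one, and you correctly single out the decisive estimate: you need $\int_{\theta_k^{-1}(B_{\tilde\rho_k^M})}|\Delta u_k|^2\,dx \geq 1-\tfrac{1}{M}-o_k(1)$, after which the bound on the inner term (either by Dirichlet minimality of the biharmonic extension, or, as you propose, by elliptic estimates applied to the rescaled boundary data of size $O(c_k^{-1})$) finishes the proof. Your bound $\|\Delta\tilde v_k^M\|_{L^2}^2=O(c_k^{-2})$ is in fact an alternative to the paper's biharmonic-orthogonality trick $\int\Delta\tilde v_k^M\,\Delta(\tilde v_k^M-\tilde u_k)\,dx=0$, and both routes work.

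The gap is in how you propose to prove the decisive estimate. You list Lemma \ref{concen}(i), the scale separation $\tilde\rho_k^M/r_k\to\infty$, the normalization \eqref{equto1}, and ``capacity/Dirichlet-energy lower bounds on the half-annulus,'' but you do not explain how these combine, and the last ingredient does not fit. First, the concentration statement $|\Delta u_k|^2\,dx\to\delta_p$ controls mass only at \emph{fixed} scales $\delta$, whereas $\tilde\rho_k^M\to 0$, so it gives no lower bound on $\int_{B_{\tilde\rho_k^M}}|\Delta u_k|^2$ by itself. Second, the ``bubble'' $B_{Rr_k}$ carries only $O(c_k^{-2})$ of the Dirichlet energy (this is exactly computation \eqref{addc1} in the Neck subsection), so engulfing the bubble is far from capturing $1-\tfrac1M$ of the energy; the bulk lives in the annulus between $r_k$ and $\tilde\rho_k^M$. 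Third, the capacity analysis of Subsection~\ref{Neck} is not available at this point: it uses Lemma \ref{gre}, which in turn relies on Lemma \ref{delta} and Lemma \ref{concentre}, both of which cite the very Lemma \ref{cufoff mod} you are proving. A na\"ive capacity comparison on the half-annulus with boundary values $\approx c_k$ and $\approx c_k/M$ also would not give $1-\tfrac1M$ but rather something closer to $(1-\tfrac1M)^2$, because you lack sharp control of $\log(\tilde\rho_k^M/r_k)$ in terms of $c_k$. (You also briefly mis-state the direction of the inequality, claiming the \emph{outer} energy accounts for $1-\tfrac1M$; it is the \emph{inner} energy that must exceed $1-\tfrac1M$.)

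What the paper does instead is entirely variational and non-circular: test the Euler--Lagrange system \eqref{maineq} against $(u_k-u_k^M)$ on $\theta_k^{-1}(B_{\tilde\rho_k^M})\cap\Omega$ and integrate by parts. The Cauchy-data matching of $v_k^M$ kills the spherical boundary terms, the Neumann conditions $\partial_\nu u_k=\partial_\nu u_k^M=0$ on $\partial\Omega$ (Remark \ref{remark}) kill the remaining flux term, and what survives is a boundary integral of $\lambda_k^{-1}u_k e^{\alpha_k u_k^2}(u_k-u_k^M)$ on $B_{\tilde\rho_k^M}\cap\partial\Omega$. Restricting to $B_{Rr_k}\cap\partial\Omega$, using $u_k\approx c_k$ and $u_k^M\approx c_k/M$ there, rescaling, and applying \eqref{equto1} yields $\int\Delta u_k\,\Delta(u_k-u_k^M)\geq 1-\tfrac1M+o_k(1)$, which together with your $O(c_k^{-2})$ bound on the cross term gives the desired $\int_{B_{\tilde\rho_k^M}}|\Delta u_k|^2\geq 1-\tfrac1M+o_k(1)$. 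You should replace the capacity suggestion with this PDE-testing argument; the normalization \eqref{equto1} you already identified is indeed the ingredient that closes the estimate, but it enters through the boundary trace integral, not through a capacity lower bound.
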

\begin{proof}Testing (\ref{maineq}) with $\left(  u_{k}-u_{k}^{M}\right)  $, by Lemma
\ref{truncation}, Lemma \ref{lamb-gama} and Remark \ref{remark}, for any $R>0,$ we have
\begin{align*}
&  \int_{\theta_k^{-1}(B_{\tilde{\rho}^{M}_{k}}(0))\bigcap\Omega}  \Delta u_{k}\Delta\left(  u_{k}-u_{k}%
^{M}\right)   dx\\
&  = \int_{\theta_k^{-1}(B_{\tilde{\rho}^{M}_{k}}(0))\bigcap\Omega}  \gamma_k \left(  u_{k}-u_{k}%
^{M}\right)   dx-\int_{\partial (\theta_k^{-1}(B_{\tilde{\rho}^{M}_{k}}(0))\bigcap\Omega)} \left(  u_{k}-u_{k}%
^{M}\right) \frac{\partial}{\partial\nu}\Delta u_k   d\sigma\\
&+\int_{\partial (\theta_k^{-1}(B_{\tilde{\rho}^{M}_{k}}(0))\bigcap\Omega)} \frac{\partial}{\partial\nu}   \left(  u_{k}-u_{k}%
^{M}\right) \Delta u_k   d\sigma\\
&  = \int_{\theta_k^{-1}(B_{\tilde{\rho}^{M}_{k}}(0))\bigcap\Omega}  \gamma_k \left(  u_{k}-u_{k}%
^{M}\right)   dx-\int_{ \theta_k^{-1}(B_{\tilde{\rho}^{M}_{k}}(0))\bigcap \partial\Omega} \left(  u_{k}-u_{k}%
^{M}\right) \frac{u_k\exp(\alpha_k u^2_k)}{\lambda_k}   d\sigma\\
&  \geq -\int_{\theta_k^{-1}(B_{\tilde{\rho}^{M}_{k}}(0))\bigcap \partial\Omega}\lambda_{k}^{-1}u_{k}\exp\left\{  \alpha_{k}u_{k}%
^{2}\right\}  \left(  u_{k}-u_{k}^{M}\right)  d\sigma+o_{k}\left(  1\right)\\
& \geq-\int_{B_{Rr_{k}(x_k)}\bigcap\partial \Omega}\lambda_{k}^{-1}c_{k}\exp\left\{  \alpha_{k}u_{k}%
^{2}\right\}  \left(  c_{k}-\frac{c_{k}}{M}\right)  d\sigma+o_{k}\left(  1\right)
\\
&  =\int_{B^{+}_{R}(0)\bigcap \partial\mathbb{R}_{+}^{4}}\left(  1-\frac{1}{M}\right)  \exp\left\{  \frac{u_k(x_k+r_kx)+c_{k}}%
{c_{k}}\alpha_{k}\psi_{k}\left(  x\right)  \right\}  d\sigma+o_{k}\left(  1\right)
\\
&  \geq\left(  1-\frac{1}{M}\right)  \int_{B^{+}_{R}(0)\bigcap \partial\mathbb{R}_{+}^{4}}\exp\left\{  24\pi^{2}%
\psi\left(  x\right)  \right\}  d\sigma+o_{k}\left(  1\right)  ,
\end{align*}
letting $R\rightarrow\infty$, we get
\begin{equation}
\int_{\theta_k^{-1}(B_{\tilde{\rho}^{M}_{k}}(0))\cap\Omega}  \Delta u_{k}\Delta\left(  u_{k}-u_{k}%
^{M}\right)   dx\geq1-\frac
{1}{M}+o_{k}\left(  1\right)  . \label{inside}%
\end{equation}
Observing that%
\begin{align*}
&  \int_{%
\Omega} \left\vert \Delta u_{k}^{M}\right\vert ^{2}dx\\
&  =\int_{\theta_k^{-1}(B_{\tilde{\rho}^{M}_{k}}(0))\bigcap\Omega}\left\vert \Delta v_{k}^{M}\right\vert ^{2}%
dx+\int_{%
\Omega\backslash \theta_k^{-1}(B_{\tilde{\rho}^{M}_{k}}(0))}\left\vert \Delta u_{k}\right\vert ^{2}%
dx\\
&  =\int_{\theta_k^{-1}(B_{\tilde{\rho}^{M}_{k}}(0))\bigcap\Omega}\left\vert \Delta v_{k}^{M}\right\vert ^{2}%
dx+1-\int
_{\theta_k^{-1}(B_{\tilde{\rho}^{M}_{k}}(0))\bigcap\Omega}\left\vert \Delta u_{k}\right\vert ^{2}dx\\
&  =\int_{\theta_k^{-1}(B_{\tilde{\rho}^{M}_{k}}(0))\bigcap\Omega}\left\vert \Delta v_{k}^{M}\right\vert ^{2}%
dx+1-\int
_{\theta_k^{-1}(B_{\tilde{\rho}^{M}_{k}}(0))\bigcap\Omega}\Delta u_{k}\Delta\left(  u_{k}-u_{k}^{M}\right)  dx\\
& -\int
_{\theta_k^{-1}(B_{\tilde{\rho}^{M}_{k}}(0))\bigcap\Omega}\Delta u_{k}\Delta u_{k}^{M}dx,
\end{align*}
by (\ref{inside}) and (\ref{test}), we have
\begin{align*}
&  \int_{%
\Omega}  \left\vert \Delta u_{k}^{M}\right\vert ^{2}  dx\\
&  \leq\frac{1}{M}+\frac{1}{2}\int_{\theta_k^{-1}(B_{\tilde{\rho}^{M}_{k}}(0))\cap\Omega}\left\vert \Delta v_{k}%
^{M}\right\vert ^{2}dx-\frac{1}{2}\int_{\theta_k^{-1}(B_{\tilde{\rho}^{M}_{k}}(0))\cap\Omega}\Delta u_{k}\Delta v_{k}^{M}dx+o_k(1)\\
&  \leq\frac{1}{M}+\frac{1}{2}\int_{\theta_k^{-1}(B_{\tilde{\rho}^{M}_{k}}(0))\cap\Omega}\Delta v_{k}^{M}\Delta\left(
v_{k}^{M}-u_{k}\right)  dx+o_k(1)\\
&   =\frac{1}{M} +o_k(1).
\end{align*}
\end{proof}

\begin{lemma}
\label{concentre}We have
\begin{align*}
\underset{k\rightarrow\infty}{\lim}\int_{%
\partial\Omega}  \exp\left(  \alpha_{k}u_{k}^{2}\right)  d\sigma  & =\underset{L\rightarrow\infty}{\lim}\underset
{k\rightarrow\infty}{\lim}\int_{B_{Lr_{k}(x_k)}\cap\partial\Omega}  \exp\left(  \alpha_{k}%
u_{k}^{2}\right)  d\sigma\\
& =\underset{k\rightarrow\infty}{\lim}\frac{-\lambda_{k}}{c_{k}^{2}}+|\partial\Omega|%
\end{align*}
and consequently,
\[
\frac{-\lambda_{k}}{c_{k}}\rightarrow\infty\text{ and }\underset{k}{\sup
}\frac{-c_{k}^{2}}{\lambda_{k}}<\infty.
\]

\end{lemma}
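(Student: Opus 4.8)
The plan is to establish the three assertions in turn, exploiting the precise decay rate of $\psi$ near infinity that was pinned down in Lemma \ref{limiteq}. First I would prove the energy concentration statement: since $|\Delta u_k|^2 dx \rightharpoonup \delta_p$ (Lemma \ref{concen}(i)) and, by Lemma \ref{cufoff mod}, the modified functions $u_k^M$ carry at most $1/M$ of the Dirichlet energy, the energy of $u_k$ outside any fixed ball $B_{\delta}(p)$ tends to zero while inside it the energy localizes further and further toward the scale $r_k$; a covering/exhaustion argument together with Lemma \ref{concen}(ii) and the Vitali convergence theorem then gives $\int_{\partial\Omega}e^{\alpha_k u_k^2}d\sigma \to |\partial\Omega| + \lim_k \int_{B_{Lr_k(x_k)}\cap\partial\Omega}e^{\alpha_k u_k^2}d\sigma$ after letting $L\to\infty$, because the contribution of the annular region $(B_\delta\setminus B_{Lr_k})\cap\partial\Omega$ is controlled by the subcritical inequality (Lemma \ref{subcritical}) applied to $u_k$ restricted there, whose energy is $o_k(1) + o_L(1)$.

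Next I would compute the inner contribution. On $B_{Lr_k}(x_k)\cap\partial\Omega$ we change variables $x = x_k + r_k y$ and use $\alpha_k u_k^2(x_k+r_k y) = \alpha_k c_k^2 + 2\alpha_k \psi_k(y)(1 + \tfrac{\psi_k(y)}{c_k^2})$ together with $\psi_k \to \psi$ in $C^3_{loc}$ and the fast-decay Lemma ($e^{-\gamma c_k^2}r_k^3 \to 0$). Pulling out the factor $\exp(\alpha_k c_k^2)$ and recalling $r_k^3 = -\tfrac{\lambda_k}{c_k^2}\exp(-\alpha_k c_k^2)$, the surface measure element $d\sigma = (1+o_k(1))r_k^3\, dy$ on $\partial\Omega_k$ converts the integral into
\begin{equation*}
\int_{B_{Lr_k(x_k)}\cap\partial\Omega} e^{\alpha_k u_k^2}\,d\sigma = \frac{-\lambda_k}{c_k^2}\,(1+o_k(1))\int_{B_L^+(0)\cap\partial\mathbb{R}_+^4} e^{24\pi^2\psi(y')}\,dy' + o_k(1),
\end{equation*}
and by \eqref{equto1} the $\psi$-integral converges to $1$ as $L\to\infty$. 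Hence $\lim_L\lim_k \int_{B_{Lr_k}\cap\partial\Omega}e^{\alpha_k u_k^2}d\sigma = \lim_k \tfrac{-\lambda_k}{c_k^2}$, which combined with the first step yields the displayed two-line identity; in particular $\lim_k \tfrac{-\lambda_k}{c_k^2}$ exists and is finite because the left side is at most $S(12\pi^2)<\infty$ by Theorem \ref{mainre}.

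For the last two consequences: from $\tfrac{-\lambda_k}{c_k^2}$ bounded we immediately get $\sup_k \tfrac{-c_k^2}{\lambda_k}<\infty$. To see $\tfrac{-\lambda_k}{c_k}\to\infty$, I would argue by contradiction: if $\tfrac{-\lambda_k}{c_k}$ stayed bounded along a subsequence, then since $\lambda_k = -\int_{\partial\Omega}u_k^2 e^{\alpha_k u_k^2}d\sigma$ and the bulk of this integral sits in $B_{Lr_k}(x_k)$ where $u_k \approx c_k$, the same change of variables gives $-\lambda_k = (1+o_k(1))c_k^2 r_k^3 \exp(\alpha_k c_k^2)\int e^{24\pi^2\psi} + \cdots$, i.e. $\tfrac{-\lambda_k}{c_k}\approx c_k \tfrac{-\lambda_k}{c_k^2}$; since $\tfrac{-\lambda_k}{c_k^2}$ has a positive limit (it equals $\lim_k\int_{B_{Lr_k}}e^{\alpha_k u_k^2}$ minus nothing, and this is bounded below away from zero because $\int e^{24\pi^2\psi}=1$) and $c_k\to\infty$, we reach a contradiction. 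The main obstacle I anticipate is the careful bookkeeping in the annular ("neck") region $(B_\delta\setminus B_{Lr_k})\cap\partial\Omega$: one must show its contribution to $\int e^{\alpha_k u_k^2}d\sigma$ is $o_k(1)+o_L(1)+o_\delta(1)$ uniformly, which requires the Dirichlet energy on that annulus to be strictly less than $1$ (so that a supercritical exponent is admissible there) — this is exactly what Lemma \ref{concen}(i) and Lemma \ref{cufoff mod} buy us, but turning the divergence-theorem estimate \eqref{com} into the needed bound on a shrinking annulus intersecting $\partial\Omega$ demands the same trace/Lorentz machinery used in Lemma \ref{lem5}.
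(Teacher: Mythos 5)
Your inner-scale computation (change of variables on $B_{Lr_k}(x_k)\cap\partial\Omega$, pulling out $e^{\alpha_k c_k^2}$, recalling $r_k^3=-\lambda_k c_k^{-2}e^{-\alpha_k c_k^2}$, and invoking \eqref{equto1}) matches the paper's lower-bound argument, and your treatment of the two consequences is fine, given the positivity of $\lim_k(-\lambda_k/c_k^2)$. But there is a genuine error in your treatment of the neck region, and it is not merely "careful bookkeeping."

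You assert that the Dirichlet energy of $u_k$ on the annulus $(B_\delta\setminus B_{Lr_k})\cap\Omega$ is $o_k(1)+o_L(1)$, and you propose to apply Lemma \ref{subcritical} to $u_k$ restricted there. This is backwards: by Lemma \ref{concen}(i) the energy outside $B_\delta$ tends to zero, and the inner energy $\int_{B_{Lr_k}\cap\Omega}|\Delta u_k|^2=c_k^{-2}\int_{B_L^+}|\Delta\psi|^2+o_k(1)$ also tends to zero for fixed $L$, so in fact
\[
\int_{(B_\delta\setminus B_{Lr_k})\cap\Omega}|\Delta u_k|^2\,dx\longrightarrow 1,
\]
the annulus carries essentially \emph{all} the energy in the limit. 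The subcritical inequality applied to $u_k$ on the annulus therefore gives you nothing. The Lorentz/Riesz-potential machinery of Lemma \ref{lem5} is also not the right tool here; it yields the pointwise growth estimate $\int_{B_\rho}|u_k\Delta u_k|\lesssim\rho^2$ needed for the blow-up equation, not a bound on $\int e^{\alpha_k u_k^2}d\sigma$ over a region where $u_k$ is large.

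What actually closes the gap in the paper is the bi-harmonic truncation $u_k^M$ of Lemmas \ref{truncation} and \ref{cufoff mod}, exploited as follows. On $\theta_k^{-1}(B_{\tilde\rho_k^M})\cap\partial\Omega$ one has $u_k\geq c_k/M$, hence $1\leq M^2u_k^2/c_k^2$, so
\[
\int_{\theta_k^{-1}(B_{\tilde\rho_k^M})\cap\partial\Omega}e^{\alpha_k u_k^2}\,d\sigma\leq\frac{M^2}{c_k^2}\int_{\partial\Omega}u_k^2 e^{\alpha_k u_k^2}\,d\sigma=\frac{-M^2\lambda_k}{c_k^2};
\]
while on $\partial\Omega\setminus\theta_k^{-1}(B_{\tilde\rho_k^M})$ one has $u_k=u_k^M$, and Lemma \ref{cufoff mod} gives $\limsup_k\|\Delta u_k^M\|_2^2\leq 1/M<1$, so now the subcritical trace Adams inequality and Vitali do apply to $u_k^M$, yielding $\int_{\partial\Omega\setminus\theta_k^{-1}(B_{\tilde\rho_k^M})}e^{\alpha_k(u_k^M)^2}\,d\sigma\to|\partial\Omega|$. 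Sending $k\to\infty$ and then $M\to 1$ gives the upper bound, which together with your lower-bound computation completes the two-line identity. You cite Lemma \ref{cufoff mod} in your opening sentence but then abandon it; it is precisely the missing ingredient, used on $u_k^M$ rather than on $u_k$ itself.
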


\begin{proof}
By Lemma \ref{truncation} and Lemma \ref{concen}, we have
\begin{align*}
\int_{%
\partial\Omega}  \exp\left(  \alpha_{k}u_{k}^{2}\right)  d\sigma& = \int_{\partial\Omega \bigcap \theta_k^{-1}(B_{\tilde{\rho}^{M}_{k}}(0))} \exp\left(  \alpha_{k}u_{k}^{2}\right)
  d\sigma+\int_{%
\partial\Omega\setminus \theta_k^{-1}(B_{\tilde{\rho}^{M}_{k}}(0))}   \exp\left(  \alpha_{k}\left(  u_{k}^{M}\right)  ^{2}\right) d\sigma\\
&  \leq \frac{-M^2\lambda_k(1+o_k(1))}{c_k^2}\int_{\theta_k^{-1}(B_{\tilde{\rho}^{M}_{k}}(0))\bigcap\partial\Omega}\frac{u_k^2}{-\lambda_k} \exp\left(  \alpha_{k}u_{k}^{2}\right)
  d\sigma+|\partial\Omega|\\
 &\leq-(1+o_k(1))M^2\frac{\lambda_k}{c_k^2}+|\partial\Omega|,
\end{align*}
Let $k\rightarrow+\infty$ and $M\rightarrow 1$, we derive that $\lim\limits_{k\rightarrow+\infty}\int_{%
\partial\Omega}  \exp\left(  \alpha_{k}u_{k}^{2}\right)  d\sigma\leq-\underset{k\rightarrow\infty}{\lim}\frac{\lambda_{k}%
}{c_{k}^{2}}+|\partial\Omega|$??

On the other hand, we also have
\begin{align*}
\int_{\partial\Omega} \exp\left( \alpha_{k}u_{k}^{2}\right)   d\sigma &=\left(\int_{\partial\Omega \backslash B_{Rr_k}(x_k)}+\int_{ B_{Rr_k}(x_k)\bigcap\partial\Omega} \right)\exp\left( \alpha_{k}u_{k}^{2}\right)   d\sigma\\
& \geq  |\partial\Omega|-|B_{Rr_k}\bigcap\partial\Omega|-\frac{\lambda_{k}}{c_{k}^{2}} \int_{ B_{R}(0)\bigcap \partial \mathbb{R}_{+}^4} \exp\left(  \psi_k+o_k(1)\right)   d\sigma.\\
\end{align*}
Letting $k\rightarrow+\infty$ and $R\rightarrow+\infty$, we derive that
$\lim\limits_{k\rightarrow+\infty}\int_{%
\partial\Omega}  \exp\left(  \alpha_{k}u_{k}^{2}\right) d\sigma\geq-\underset{k\rightarrow\infty}{\lim}\frac{\lambda_{k}%
}{c_{k}^{2}}+|\partial\Omega|$. Combining the above estimates, we accomplish the proof of Lemma \ref{concentre}.
\end{proof}

\begin{lemma}\label{delta}
For any $\varphi\in C^{\infty}\left(\partial
\Omega\right)  $, one has
\begin{equation}
-\underset{k\rightarrow\infty}{\lim}\int_{\partial\Omega}\varphi\left(  x\right)  \frac{c_{k}u_{k}}{\lambda_{k}}
\exp\left( \alpha_{k}u_{k}^{2}\right)
d\sigma=\varphi\left(  p\right)  .\ \label{concentra}%
\end{equation}

\end{lemma}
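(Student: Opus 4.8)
The plan is to show that the normalized, rescaled boundary density $-\frac{c_k u_k}{\lambda_k}\exp(\alpha_k u_k^2)\,d\sigma$ converges weakly-$*$ as a measure on $\partial\Omega$ to the Dirac mass $\delta_p$. There are two things to establish: that the total mass tends to $1$, and that all the mass concentrates at the single blow-up point $p$. The total mass statement follows by multiplying through by $-1$ and integrating; writing $\mu_k := -\frac{1}{\lambda_k}\int_{\partial\Omega} c_k u_k\exp(\alpha_k u_k^2)\,d\sigma$, we have by the definition of $r_k$ and the change of variables $x\mapsto x_k+r_k x$ (so that $c_k u_k(x_k+r_kx)=c_k^2+\psi_k(x)$ on $\partial\Omega_k$) that $\mu_k = -\frac{1}{\lambda_k}\int_{\partial\Omega_k\cap B_R} c_k u_k(x_k+r_kx)\exp(\alpha_k u_k^2(x_k+r_kx))\,r_k^3\,d\sigma + (\text{tail})$. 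Using $\phi_k\to 1$ and $\psi_k\to\psi$ in $C^3_{loc}$ together with the normalization $r_k^3=-\frac{\lambda_k}{c_k^2}e^{-\alpha_k c_k^2}$, the inner integral converges to $\int_{\partial\mathbb{R}^4_+\cap B_R}\exp(24\pi^2\psi)\,d\sigma$, which by \eqref{equto1} tends to $1$ as $R\to\infty$; the tail contribution outside $B_{Rr_k}(x_k)$ is controlled using Lemma \ref{concen}(ii) and the fast decay of $r_k$ as in the previous lemma, so it is $o_k(1)$ after sending $R\to\infty$. Hence $\mu_k\to 1$.

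Next I would show the concentration. Fix $\delta>0$ and estimate $\left|\int_{\partial\Omega\setminus B_\delta(p)}\varphi\,\frac{c_k u_k}{\lambda_k}\exp(\alpha_k u_k^2)\,d\sigma\right|$. By Lemma \ref{concen}(ii) and (iii), on $\partial\Omega\setminus B_\delta(p)$ we have $u_k\to 0$ in $C^0$ and $\exp(\alpha_k u_k^2)$ bounded in every $L^p$; since $c_k/(-\lambda_k)\to 0$ (this is exactly the "$-\lambda_k/c_k\to\infty$" conclusion of Lemma \ref{concentre}), the whole expression on $\partial\Omega\setminus B_\delta(p)$ is $o_k(1)$, uniformly for $\|\varphi\|_\infty\le 1$. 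Therefore, writing $\varphi = \varphi(p) + (\varphi-\varphi(p))$ and using continuity of $\varphi$ at $p$ together with the uniform bound $\mu_k\le C$, we get
\[
\left|-\int_{\partial\Omega}\varphi\,\frac{c_k u_k}{\lambda_k}\exp(\alpha_k u_k^2)\,d\sigma - \varphi(p)\,\mu_k\right| \le \sup_{B_\delta(p)\cap\partial\Omega}|\varphi-\varphi(p)|\cdot\mu_k + o_k(1).
\]
Letting $k\to\infty$ and then $\delta\to 0$, and using $\mu_k\to 1$, yields \eqref{concentra}.

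The main obstacle is the interchange between the $R\to\infty$ limit of the local (rescaled) integral and the $k\to\infty$ limit, i.e.\ justifying that the total mass $\mu_k$ loses nothing in the "neck region" $B_\delta(p)\setminus B_{Rr_k}(x_k)$. This requires a quantitative decay estimate for $u_k$ between scale $Rr_k$ and scale $\delta$; the right tool is the subcritical trace inequality from Lemma \ref{subcritical} combined with Lemma \ref{concen}(i) (which forces $\int_{B_\delta(p)}|\Delta u_k|^2\to 1$, hence only $o(1)$ energy in any annulus $B_\delta(p)\setminus B_{Rr_k}(x_k)$ once $R$ is large), giving smallness of $\int_{\partial\Omega\cap(B_\delta\setminus B_{Rr_k})}\exp(\alpha_k u_k^2)$ uniformly in $k$ for $R$ large and $\delta$ small; since $\frac{c_k|u_k|}{-\lambda_k}\le \frac{c_k^2}{-\lambda_k}\le C$ there (using $\sup_k(-c_k^2/\lambda_k)<\infty$ from Lemma \ref{concentre} and $|u_k|\le c_k$), this controls the neck contribution. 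Once the neck is handled, the rest is the routine weak-$*$ convergence argument sketched above.
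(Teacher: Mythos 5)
Your overall decomposition (core ball $B_{Rr_k}(x_k)$, neck $B_\delta(p)\setminus B_{Rr_k}(x_k)$, far region $\partial\Omega\setminus B_\delta(p)$) is a reasonable and in fact slightly cleaner alternative to the paper's three-region split (the paper uses the truncation radius $\tilde\rho_k^M$ as the intermediate scale). Your treatment of the core (via $\psi_k\to\psi$ and \eqref{equto1}) and of the far region (via $u_k\to 0$ uniformly and $c_k/(-\lambda_k)\to 0$ from Lemma \ref{concentre}) is correct, and the far-region estimate is more elementary than the paper's, which goes through the truncated functions $u_k^M$, Lemma \ref{cufoff mod}, and the subcritical trace inequality.

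However, there is a genuine error in your treatment of the neck. You assert that Lemma \ref{concen}(i) ``forces $\int_{B_\delta(p)}|\Delta u_k|^2\to 1$, hence only $o(1)$ energy in any annulus $B_\delta(p)\setminus B_{Rr_k}(x_k)$ once $R$ is large.'' The implication is exactly backwards: since $\int_\Omega|\Delta u_k|^2=1$ and $\int_{\Omega\setminus B_\delta}|\Delta u_k|^2\to 0$, while the rescaling limit gives $\int_{B_{Rr_k}\cap\Omega}|\Delta u_k|^2 = \frac{1}{c_k^2}\int_{B_R^+}|\Delta\psi|^2 + o(c_k^{-2}) = O(c_k^{-2})$ (this is \eqref{add2}), the annulus $B_\delta\setminus B_{Rr_k}$ actually carries essentially \emph{all} of the Dirichlet energy, approaching $1$ as $k\to\infty$. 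The whole point of the capacity/neck analysis in Subsection \ref{Neck} is precisely to control this nearly-unit neck energy. So the step ``small energy in the annulus $\Rightarrow$ small $\int_{\text{neck}}e^{\alpha_k u_k^2}d\sigma$'' cannot be run.

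The conclusion you need—$\limsup_k\int_{(B_\delta\setminus B_{Rr_k})\cap\partial\Omega}e^{\alpha_k u_k^2}d\sigma\to 0$ after $R\to\infty$, $\delta\to 0$—is nonetheless true, but must be obtained differently. Two valid routes: (a) use the paper's truncation radius $\tilde\rho_k^M$ and the lower bound $u_k\ge c_k/M$ on $B_{\tilde\rho_k^M}$ from Lemma \ref{truncation}, which converts $c_k u_k\le M u_k^2$, so the neck integrand is dominated by $M\,\frac{u_k^2}{-\lambda_k}e^{\alpha_k u_k^2}$, whose total mass is exactly $1$ and whose mass on the core converges to $\int_{\partial\mathbb{R}^4_+}e^{24\pi^2\psi}=1$; or (b) deduce smallness of the neck exponential integral directly by bookkeeping from Lemma \ref{concentre}: $\lim_k\int_{\partial\Omega}e^{\alpha_k u_k^2}=|\partial\Omega|+\mathcal{L}$ with $\mathcal{L}:=\lim_k(-\lambda_k/c_k^2)$, $\lim_k\int_{\partial\Omega\setminus B_\delta}e^{\alpha_k u_k^2}=|\partial\Omega\setminus B_\delta|$, and $\lim_k\int_{B_{Rr_k}\cap\partial\Omega}e^{\alpha_k u_k^2}=\mathcal{L}\int_{B_R^+\cap\partial\mathbb{R}^4_+}e^{24\pi^2\psi}$, whence the neck contribution is $|B_\delta\cap\partial\Omega|+\mathcal{L}\big(1-\int_{B_R^+\cap\partial\mathbb{R}^4_+}e^{24\pi^2\psi}\big)\to 0$. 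Either repair makes your decomposition work; as written, the argument is not correct.
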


\begin{proof} For any fixed $M>1$, and $k$ large enough, we divide $\partial\Omega$ into the following three parts:
 $$\Omega _{1}=\left(\theta_k^{-1}(B_{\tilde{\rho}^{M}_{k}}(0)) \setminus  B_{R r_{k}}\left(x_{k}\right)\right)\bigcap\partial\Omega,  \Omega_{2}=\partial\Omega \setminus  \theta_k^{-1}(B_{\tilde{\rho}^{M}_{k}}(0)),  \Omega _{3}=B_{R r_{k}}\left(x_{k}\right)\bigcap\partial\Omega,$$
and split the integral as follows:
\begin{align}\nonumber
\int_{\partial\Omega}\varphi\left(  x\right)  \frac{c_{k}u_{k}}{-\lambda_{k}}
\exp\left(  \alpha_{k}u_{k}^{2}\right)   d\sigma
&  =\left(\int_{\Omega_1}+\int_{\Omega_2}+\int_{\Omega_2}\right)\varphi\left(  x\right)  \frac{c_{k}u_{k}}{-\lambda_{k}}
\exp\left(  \alpha_{k}u_{k}^{2}\right)   d\sigma\\
& =I_1+I_2+I_3.
\end{align}
For $I_1$, we have
\begin{align}\label{I1}\nonumber
|I_1|&\leq M\sup _{\partial \Omega}|\varphi|\int_{\Omega_1}  \frac{ u^2_{k}}{-\lambda_{k}}
\exp\left(  \alpha_{k}u_{k}^{2}\right)   d\sigma\\ \nonumber
&\leq  M\sup _{\partial \Omega}|\varphi|(1+o_k(1)) \left(1-\int_{ B_{R r_{k}}\bigcap\partial\Omega}  \frac{ u^2_{k}}{-\lambda_{k}}
\exp\left(  \alpha_{k}u_{k}^{2}\right)d\sigma\right)\\ \nonumber
&\leq  M\sup _{\partial \Omega}|\varphi| \left(1-\int_{B^{+}_{R}\bigcap \partial\mathbb{R}^4_{+}}
\exp\left(  24\pi^2 \psi\right)d\sigma+o_k(1)\right)\\
&\rightarrow 0\ \text{ as } k,R\rightarrow\infty.
\end{align}
Next, by Lemma \ref{cufoff mod}, H{\"o}lder's inequality, Sobolev imbedding theorem and Lemma \ref{concentre}, we have
\begin{align}\label{I2}\nonumber
\left|I_{2}\right| & \leq \sup _{\partial \Omega}|\varphi| \frac{c_{k}}{-\lambda_{k}} \int_{\partial \Omega}\left|u_{k}\right| e^{\alpha_k\left(u_{k}^{M}\right)^{2}}d\sigma \\ \nonumber
& \leq  \sup _{\partial \Omega}|\varphi| \frac{c_{k}}{-\lambda_{k}}\left\|u_{k}\right\|_{L^{p'}(\partial \Omega)}\left\|e^{\alpha_k\left(u_{k}^{M}\right)^{2}}\right\|_{L^{p}(\partial \Omega)} \\
& \leq c \sup _{\partial \Omega}|\varphi| |\frac{c_{k}}{\lambda_{k}}|\rightarrow 0 \text{ as } k\rightarrow\infty,
\end{align}
for some $p>1$ and $p'$ with $\frac{1}{p}+\frac{1}{p'}=1$.

Finally, we have

 \begin{align}\label{I3}\nonumber
 I_{3}&=\int_{ B_{Rr_{k}}\bigcap\partial\Omega}\varphi\left(  x\right)  \frac{c_{k}u_{k}}{-\lambda_{k}}
\exp\left(  \alpha_{k}u_{k}^{2}\right)   d\sigma\\ \nonumber
&=\int_{ B^{+}_{R}\cap \partial\mathbb{R}^4_+}\varphi\left(  r_kx+x_k\right)  \exp\left\{
 (\phi_k+1)\alpha_{k} \psi_{k}\left(  x\right)  \right\}  dx+o_{k}\left(  1\right)\\ \nonumber
&= \varphi\left( p\right) \int_{B^{+}_{R}\cap \partial\mathbb{R}^4_+}\exp\left\{  24\pi^{2}%
\psi\left(  x\right)  \right\}  d\sigma+o_{k}\left(  1\right)\\
&= \varphi\left( p\right) +o_{k,R}(1).
\end{align}
Combining (\ref{I1}), (\ref{I2}) and (\ref{I3}), we obtain (\ref{concentra}) and the proof is finished.

\end{proof}

\begin{lemma}\label{gre}
$c_ku_k\rightharpoonup G$ weakly in $W^{2,q}(\Omega)$ for any $1<q<2$, furthermore, for any $\Omega'\Subset\overline{\Omega}\setminus{p}$, we have $c_ku_k\rightarrow G$ in $C^{\infty}(\overline{\Omega'})$,  where $G$ satisfies \begin{equation}\label{green}
\begin{cases}& \Delta^2 G=\delta_{p}-\frac{1}{|\Omega|} \text{ in } \Omega,\\&
\int_{\Omega}G=0, \frac{\partial G}{\partial\nu}=0, \frac{\partial\Delta G}{\partial\nu}|_{\partial\Omega\setminus\{p\}}=0. \end{cases}
\end{equation}
Moreover, we have \begin{equation}
G=-\frac{1}{4\pi^{2}}\ln\left\vert x-p\right\vert +A_p+\varphi\left(  x\right)
,\label{Green1}%
\end{equation}
where $A_p$ is some constant depending on $p$, $\varphi\left( x\right)  \in
C^{3}\left(
\Omega\right)\bigcap C^1(\overline{\Omega})$ and $\varphi\left(  p\right)  =0$.

\end{lemma}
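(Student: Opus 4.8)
The plan is to extract the Green function as the weak limit of $c_k u_k$, using the Euler--Lagrange equation \eqref{fen} and the estimate from Lemma \ref{delta}. First I would test \eqref{fen} against an arbitrary $\varphi\in C^\infty(\overline\Omega)$: multiplying $\Delta^2 u_k=\gamma_k$ by $\varphi$, integrating by parts twice and using the boundary conditions $\frac{\partial u_k}{\partial\nu}=0$ on $\partial\Omega$ and $\frac{\partial\Delta u_k}{\partial\nu}=\frac{u_k\exp(\alpha_k u_k^2)}{\lambda_k}$, I obtain
\begin{equation*}
\int_\Omega \Delta u_k\,\Delta\varphi\,dx = \gamma_k\int_\Omega\varphi\,dx + \int_{\partial\Omega}\varphi\,\frac{u_k\exp(\alpha_k u_k^2)}{\lambda_k}\,d\sigma.
\end{equation*}
Multiplying by $c_k$ and letting $k\to\infty$: the first term on the right tends to $\bar\gamma\int_\Omega\varphi\,dx$ for some $\bar\gamma$ (since $c_k\gamma_k$ is bounded by Lemma \ref{lamb-gama} together with \eqref{sign}, and in fact $c_k\gamma_k\to-\frac{1}{|\Omega|}$ must be forced by testing with $\varphi\equiv1$), while the boundary term converges to $-\varphi(p)$ by Lemma \ref{delta}. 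Hence any weak limit $G$ of $c_k u_k$ satisfies $\int_\Omega\Delta G\,\Delta\varphi=\varphi(p)-\frac{1}{|\Omega|}\int_\Omega\varphi$, which is the weak formulation of \eqref{green}; the side conditions $\int_\Omega G=0$, $\frac{\partial G}{\partial\nu}=0$ pass to the limit directly, and $\frac{\partial\Delta G}{\partial\nu}=0$ on $\partial\Omega\setminus\{p\}$ follows since away from $p$ the right-hand side $\frac{u_k\exp(\alpha_k u_k^2)}{\lambda_k}$ has no mass in the limit.

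To make this rigorous I must first establish that $c_k u_k$ is bounded in $W^{2,q}(\Omega)$ for each $1<q<2$, so that a weakly convergent subsequence exists. For this I would use the representation via the Green function $G_\Omega$ of the Neumann Laplacian introduced in the proof of Lemma \ref{lem5}: from \eqref{fen1}--\eqref{fen2} one writes $\Delta u_k$ as a single-layer-type potential of $f_k=\frac{u_k\exp(\alpha_k u_k^2)}{\lambda_k}$ on $\partial\Omega$ plus a bounded term coming from $\gamma_k$, and then $u_k$ as a volume potential of $\Delta u_k$. Since $\|c_k f_k\|_{L^1(\partial\Omega)}$ is bounded (this is exactly $\int_{\partial\Omega}\frac{c_k|u_k|\exp(\alpha_k u_k^2)}{-\lambda_k}\,d\sigma\le C$, which follows from Cauchy--Schwarz and Lemma \ref{concentre}), the kernel estimate $|G_\Omega(x,y)|\lesssim|x-y|^{-2}$ and the mapping properties of the Riesz potential $I_2$ on the $3$-dimensional boundary give $c_k\Delta u_k$ bounded in $L^{q}(\Omega)$ for $q<3$, hence $c_k u_k$ bounded in $W^{2,q}(\Omega)$ for $q<2$ by Calderón--Zygmund estimates applied to \eqref{fen1}. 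Compact embedding then yields strong $L^{p}$ convergence and, by elliptic regularity applied on $\Omega'\Subset\overline\Omega\setminus\{p\}$ where the boundary data of $u_k$ is smooth and tends to $0$ (Lemma \ref{concen}(iii)), the $C^\infty_{loc}(\overline\Omega\setminus\{p\})$ convergence.

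Finally, the decomposition \eqref{Green1} is the standard structure of the Green function of $\Delta^2$ with a Dirac source at a boundary point in dimension four: the fundamental solution of $\Delta^2$ in $\mathbb R^4$ is $-\frac{1}{8\pi^2}\log|x|$ up to normalization, but because $p\in\partial\Omega$ and we impose Neumann-type conditions, the method of images (even reflection across the boundary, as in Lemma \ref{limiteq}) doubles the singular coefficient to $-\frac{1}{4\pi^2}\log|x-p|$; subtracting this explicit singular part leaves a function $\varphi$ solving a biharmonic equation with bounded data, hence $\varphi\in C^3(\Omega)\cap C^1(\overline\Omega)$ by Schauder/$L^p$ theory, and one normalizes $\varphi(p)=0$ by absorbing the constant into $A_p$. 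The main obstacle I anticipate is the uniform $W^{2,q}$ bound for $c_k u_k$: controlling the boundary concentration of $c_k f_k$ and transferring an $L^1(\partial\Omega)$ bound through the layer potentials to an $L^q(\Omega)$ bound on $c_k\Delta u_k$ without losing the endpoint requires the Lorentz-space and $L\log^{1/2}L$ machinery already developed for Lemma \ref{lem5}, together with a careful verification that the Neumann Green function's regular part does not spoil the estimate near $p$.
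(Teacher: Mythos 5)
Your proof tracks the paper's at every stage except one: the uniform $W^{2,q}(\Omega)$ bound for $c_k u_k$. The paper obtains this bound by duality, pairing $\Delta(c_k u_k)$ against test functions $\varphi$ with $\|\varphi\|_{W^{2,q'}}=1$, $q'>2$; integrating by parts via \eqref{veq} and invoking $W^{2,q'}(\Omega)\hookrightarrow L^\infty(\Omega)$ together with Lemma \ref{delta} gives a $k$-independent bound in a few lines. You instead go through the explicit Neumann-potential representation $c_k\Delta u_k(x)=\int_{\partial\Omega}G_\Omega(x,y)\,c_k f_k(y)\,d\sigma_y$ (the interior contribution vanishes because $\int_\Omega G_\Omega(x,\cdot)\,dy=0$ and $c_k\gamma_k$ is bounded) together with the $L^1(\partial\Omega)$ bound on $c_kf_k$ coming from Cauchy--Schwarz and Lemma \ref{concentre}. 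This works and is closer in spirit to Lemma \ref{lem5}, but the duality route is shorter and sidesteps any further potential theory. One exponent slip worth noting: the boundary potential $\int_{\partial\Omega}|x-y|^{-2}h(y)\,d\sigma_y$, viewed as a function of $x$ ranging over the four-dimensional set $\Omega$, lies in $L^{2,\infty}(\Omega)$ and hence in $L^q(\Omega)$ only for $q<2$, not $q<3$; the exponent $3$ would be correct for its restriction to the three-dimensional boundary (Riesz potential of order $2$ in $\mathbb R^3$), but it is the interior estimate you need. Since you only actually use $W^{2,q}$ for $q<2$, the conclusion is unaffected. The remaining steps --- identifying the weak limit via the tested equation and Lemma \ref{delta}, the $C^\infty$ convergence on $\Omega'\Subset\overline\Omega\setminus\{p\}$ via a cutoff and elliptic regularity, and the singularity peeling $g=G+\frac{1}{4\pi^{2}}\phi\ln|x-p|$ with the coefficient $\frac{1}{4\pi^2}=2\cdot\frac{1}{8\pi^2}$ dictated by the boundary location of $p$ --- coincide with the paper's argument.
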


\begin{proof} From \eqref{fen}, we have
\begin{equation}\label{veq}
\begin{cases}
 \Delta^2 (c_ku_k)=c_k\gamma_k, & \forall x\in {\mathop \Omega
}, \\
    \frac{\partial}{\partial \nu}\Delta (c_k u_k)=\frac{c_ku_k\exp(\alpha_k u^2_k)}{\lambda_k}, & \forall x\in \partial\Omega.
\end{cases}
\end{equation}

Integrating both sides on $\Omega$, one has

\begin{align}\nonumber
\int_{\Omega}c_k\gamma_kdx &=\int_{\Omega}\Delta^2 (c_ku_k)dx\\\nonumber
&=\int_{\partial\Omega}\frac{\partial\Delta (c_ku_k)}{\partial\nu} d\sigma\\
&=\int_{\partial\Omega}\frac{c_ku_k\exp(\alpha_k u^2_k)}{\lambda_k}d\sigma,
\end{align}
which together with Lemma \ref{delta} gives $c_k\gamma_k\rightarrow-\frac{1}{|\Omega|}$ as $k\rightarrow\infty$.
For any $q\in (1,2)$, we have
\begin{equation*}
\int_{\Omega}|\Delta c_ku_k|^qdx =\sup\{\int_{\Omega}\Delta (c_ku_k)\Delta \varphi dx: \parallel \varphi\parallel_{W^{2,q'}}=1\},
\end{equation*}
where $\frac{1}{q}+\frac{1}{q'}=1$. By the Sobolev embedding theorem, we have $\sup_{x\in\Omega}|\varphi(x)|<\infty$.  Using Lemma \ref{delta}, we have
\begin{align}\label{div1}
\int_{\Omega}\Delta(c_ku_k)\Delta\varphi dx &=\int_{\Omega}\Delta^2 (c_ku_k)\varphi dx-\int_{\partial\Omega}\frac{\partial\Delta (c_ku_k)}{\partial\nu}\varphi d\sigma\\\nonumber
&=\int_{\Omega}c_k\gamma_k\varphi(x) dx-\int_{\partial\Omega}\frac{c_ku_k\varphi\exp(\alpha_k u^2_k)}{\lambda_k}d\sigma\\\nonumber
&=-\frac{1}{|\Omega|}\int_{\Omega}\varphi(x)dx+\varphi(p)+o_k(1)\\\nonumber
&\leq c\sup_{x\in\Omega}|\varphi(x)|<c,
\end{align}
which implies that
$$\int_{\Omega}|\Delta c_ku_k|^qdx<c.$$
Combining this and the condition $\int_{\Omega}c_ku_kdx=0$, $\int_{\Omega}u_kdx=0$, we derive that $c_ku_k$ is bounded in $W^{2,q}(\Omega)$ for any $1\leq q<2$. Thus, there exists some $G\in W^{2,q}(\Omega)$ such that $c_ku_k\rightharpoonup G$ in $W^{2,q}(\Omega)$ as $k\rightarrow\infty$. Now, letting $k\rightarrow\infty$ in (\ref{div1}), we have
\begin{align}\nonumber
\int_{\Omega}\Delta G\Delta\varphi dx =- \frac{1}{|\Omega|}\int_{\Omega}\varphi(x)dx+\varphi(p).
\end{align}
Combining the assumptions on $u_k$,  (\ref{green}) is proved.
\vskip0.1cm

For any $\Omega'\Subset \overline{\Omega}\setminus{p}$, we can choose some function $\phi\in C^{\infty}(\mathbb{R}^4)$ such that $\phi(x)=1 $ for $x\in\Omega' $ and $\phi(x)=0 $ for $x$ belonging to a small neighborhood of $p$. By Lemma \ref{concen}, we know that $\phi u_{k} \rightarrow 0$ in $L^{2}(\Omega')$ as $k \rightarrow +\infty$. This together with the convergence $\Delta u_{k} \rightarrow 0$ in $L^{2}(\Omega')$ as $k \rightarrow \infty$ implies that $e^{\alpha_k u_{k}^{2}}$ is uniformly bounded in $L^{s}( \overline{\Omega'})$ for any $s>1$. Standard elliptic regularity gives that $c_{k} u_{k} \rightarrow G$ in $C^{k}( \overline{\Omega'})$ for any positive integer $k$.
\vskip0.1cm

Next, we prove (\ref{Green1}). Fix $r>0$, without loss of generality, we assume $p=0$, and choose some cutoff function $\phi\in C_{0}^{\infty}\left(
B_{2r}\left(  0\right)  \right)  $ such that $\phi=1$ in $B_{r}\left(
0\right)  $. Let%
\[
g\left(  x\right)  =G\left(  x\right)  +\frac{1}{4\pi^{2}}\phi\left(
x\right)  \ln\left\vert x\right\vert .
\]
Then we have
\[
\Delta^{2}g\left(  x\right)  =f\text{ in }%
\Omega\text{,}%
\]
where
\begin{align*}
f\left(  x\right)   &  =\frac{1}{4\pi^{2}}\left(  \Delta^{2}\phi\cdot
\ln\left\vert x\right\vert +2\nabla\Delta\phi\cdot\nabla\ln\left\vert
x\right\vert +\right.  \\
&  \left.  +2\Delta\left(  \nabla\phi\cdot\nabla\ln\left\vert x\right\vert
\right)  +2\nabla\phi\cdot\nabla\Delta\ln\left\vert x\right\vert +\phi
\cdot\Delta^{2}\ln\left\vert x\right\vert \right)  +\delta\left(
x\right)  -\frac{1}{|\Omega|}.
\end{align*}
Since $\frac{1}{4\pi^{2}}\phi\cdot\Delta^{2}\ln\left\vert x\right\vert
=\delta\left(  x\right)  $ in $%
\mathbb{R}
^{4}_{+}$, a careful computation yields
\begin{align*}
f\left(  x\right)   &  =\frac{1}{4\pi^{2}}\left(  \Delta^{2}\phi\cdot
\ln\left\vert x\right\vert +2\nabla\Delta\phi\cdot\nabla\ln\left\vert
x\right\vert +\right.  \\
&  \left.  +2\Delta\left(  \nabla\phi\cdot\nabla\ln\left\vert x\right\vert
\right)  +2\nabla\phi\cdot\nabla\Delta\ln\left\vert x\right\vert \right)  -\frac{1}{|\Omega|}.
\end{align*}
Observing $G\in W^{2,s}\left(
\Omega\right)  $ for any $1<s<2$, we obtain $f\left(  x\right)  \in L%
^{p}\left(
\Omega\right)  $ for any $p>2$. By \ the standard regularity theory, we get
$g\left(  x\right)  \in C_{loc}^{3}(\Omega)\bigcap C^1(\overline{\Omega})$. Let $A_p=g\left(  0\right)  $ and%
\[
\varphi\left(  x\right)  =g\left(  x\right)  -g\left(  0\right)  +\frac
{1}{4\pi^{2}}\left(  1-\phi\right)  \ln\left\vert x\right\vert .
\]
Then\ we have
\begin{equation}
G=-\frac{1}{4\pi^{2}}\ln\left\vert x\right\vert +A_p+\varphi\left(  x\right),%
\end{equation}
where $A_p$ is some constant depending on $p$, $\varphi\left( x\right)  \in
C^{3}\left(
\Omega\right)\bigcap C^1(\overline{\Omega})$ and $\varphi\left(  0\right)  =0$, and the proof is finished.

\end{proof}

\subsection{Neck analysis}\label{Neck}

In this subsection, we will use the capacity technique to derive the upper bound of $I_{12\pi^2}(u_k)$ when $c_k\rightarrow\infty$. The capacity technique applied to the existence of extremals for Adams inequalities was first used by Lu and Yang in \cite{lu-yang 1}, and was improved by DelaTorre and Mancini in \cite{DelaTorre} by comparing the Dirichlet energy of maximizing sequence with the energy of a suitable poly-harmonic function.

Based on Lemma \ref{concentre}, we only need to give the sharp upper bound of  $\underset{k\rightarrow\infty}{\lim}\frac{-\lambda_k}{c^2_k}$. Let us fix a large $R > 0$ and a small $\delta > 0$ and consider the annular region $$A_{k}(R, \delta):=\left\{x \in \Omega: r_k R \leq\left|x-x_k\right| \leq \delta\right\}$$
Our strategy is to compare the Dirichlet energy of $u_k$ on $A_{k}(R, \delta)$ with  the energy of the following
function $$\mathcal{W}_k(x):=-\frac{1}{4\pi^2 c_k} \left(\log\left|x-x_k \right|+\rho_k(x)\right),$$ where $\rho_k(x)\in C^\infty(\bar{\Omega})$ is chosen such that $$\frac{\partial \mathcal{W}_k(x)}{\partial\nu}=\frac{\partial\Delta\mathcal{W}_k(x)}{\partial\nu}=0, \text{for } x\in \partial\Omega$$ and $\|\rho_k(x)\|_{C^3}=O(\delta)$.

As a consequence of Lemma \ref{limiteq}, on $\partial B_{R r_k}\left(x_k\right)\bigcap\Omega$, we have that
\begin{align*}
u_k(x)&=c_k+\frac{\psi\left(\frac{x-x_k}{r_k}\right)}{c_k}+o\left(c_{k}^{-1}\right)\\
&=c_k-\frac{1}{4\pi^2c_k}\log R -\frac{1}{6\pi^2c_k}\log{\frac{\pi}{2}}+\frac{O\left(R^{-1}\right)}{c_k}+o\left(c_{k}^{-1}\right),
\end{align*}
 provided $k$ large enough. Similarly, a direct computation also gives
$$\begin{aligned} \Delta^{\frac{j}{2}}u_k &=\frac{\left(\Delta^{\frac{j}{2}} \psi\right)\left(\frac{x-x_k}{r_{k}}\right)}{r_{k}^{j} c_{k}}+o\left(r_{k}^{-j} c_{k}^{-1}\right) \\ &=-\frac{ K_{2, \frac{j}{2}}}{4\pi^2 r_{k}^{j} c_{k} R^{j}} e_{ j}\left(x-x_k\right)+\frac{O\left(R^{-j-1}\right)}{r_{k}^{j} c_{k}}+o\left(r_{k}^{-j} c_{k}^{-1}\right), \end{aligned}$$
for any $1\leq j\leq 3$, where \[{K_{2,\frac{j}{2}}}
 =
\begin{cases}
1,  & \mbox{if }j=1, \\
2, & \mbox{if }j=2,\\
-4,& \mbox{if }j=3\\
\end{cases}\]
and $e_{j}(x):=\left\{\begin{array}{l}1, j \text { even, } \\ \frac{x}{|x|}, j \text { odd. }\end{array}\right.$

Recalling the definition of $ \mathcal{W}_k$, we have on $\partial B_{R r_k}\left(x_k\right)\bigcap\Omega$ that
\begin{equation}\label{add6}\mathcal{W}_k=\frac{\alpha_k}{12\pi^2}c_k-\frac{1}{12\pi^2 c_k}\log\frac{-\lambda_k}{c^2_k}-\frac{1}{4\pi^2 c_k}\log R+\frac{O(\delta)}{c_k}\end{equation}
and
 \begin{equation}\label{add7}
\Delta^{\frac{j}{2}} \mathcal{W}_{k}=-\frac{ K_{2, \frac{j}{2}}}{4\pi^2 c_{k} r_{k}^{j} R^{j}} e_{ j}(x-x_k)+\frac{O(\delta)}{c_k}, \quad \text { for any } 1 \leq j \leq 3.
\end{equation}
Hence, we conclude that on $\partial B_{R r_k}\left(x_k\right)\bigcap\Omega$,
$$
u_k(x)-\mathcal{W}_{k}=\frac{1}{12\pi^2 c_k}\log\frac{-\lambda_k}{c^2_k}-\frac{1}{6\pi^2c_k}\log{\frac{\pi}{2}}+\frac{O\left(R^{-1}\right)}{c_k}+\frac{O(\delta)}{c_{k}}+o\left(c_{k}^{-1}\right)+(1-\frac{\alpha_k}{12\pi^2})c_k
$$
and $$
\Delta^{\frac{j}{2}}\left(u_k-\mathcal{W}_{k}\right)=\frac{O\left(R^{-j-1}\right)}{r_{k}^{j} c_{k}}+o\left(r_{k}^{-j} c_{k}^{-1}\right), \quad \text { for any } 1 \leq j \leq 3.
$$
Similarly, in view of Lemma \ref{gre}, we also derive that on $\partial B_{\delta}\left(x_k\right)\bigcap \Omega$,
$$u_k(x)-\mathcal{W}_{k}=\frac{A_p}{c_{k}}+\frac{O(\delta)}{c_{k}}+o\left(c_{k}^{-1}\right)
$$
and
$$
\Delta^{\frac{j}{2}}\left(u_k(x)-\mathcal{W}_{k}\right)=\frac{O(1)}{c_{k}}+o\left(c_{k}^{-1}\right), \quad \text { for any } 1 \leq j \leq 3,
$$
where we have also used that $
\frac{\left|x-x_{k}\right|}{\left|x-p\right|} \rightarrow 1
$ uniformly on $\partial B_{\delta}\left(x_k\right)$.

Now, we  compare $\left\|\Delta u_{k}\right\|_{L^{2}\left(A_{k}(R, \delta)\right)}$ and $\left\|\Delta \mathcal{W}_{k}\right\|_{L^{2}\left(A_{k}(R, \delta)\right)}$.
Obviously,
\begin{equation}\label{A}
\left\|\Delta u_{k}\right\|_{L^{2}\left(A_{k}(R, \delta)\right)}^{2}-\left\|\Delta \mathcal{W}_{k}\right\|_{L^{2}\left(A_{k}(R, \delta)\right)}^{2} \geq 2 \int_{A_{k}(R, \delta)} \Delta\left(u_{k}-\mathcal{W}_{k}\right) \cdot \Delta \mathcal{W}_{k} d x.
\end{equation}

\textbf{Step 1}. Estimates for the RHS of \eqref{A}.
\bigskip

Integrating by parts,  the integral in the right-hand side equals to

 \begin{align*}
   2\int_{A_{k}(R, \delta)} \Delta\left(u_{k}-\mathcal{W}_{k}\right) \cdot \Delta \mathcal{W}_{k} d x&=-2\int_{\partial A_{k}(R, \delta)\backslash\partial \Omega} \nu \cdot\left(\left(u_{k}-\mathcal{W}_{k}\right) \Delta^{\frac{3}{2}} \mathcal{W}_{k}\right) d \sigma\\
   &+2\int_{\partial A_{k}(R, \delta)\backslash\partial \Omega} \nu \cdot\left(\Delta^{\frac{1}{2}}\left(u_{k}-\mathcal{W}_{k}\right) \Delta \mathcal{W}_{k}\right) d \sigma,
 \end{align*}
where we have used the fact $\frac{\partial\mathcal{W}_{k}}{\partial\nu}=\frac{\partial\Delta\mathcal{W}_{k}}{\partial\nu}=0$ on $\partial A_{k}(R, \delta)\bigcap\partial \Omega$.

On  $\partial B_{R r_k}\left(x_k\right)\bigcap \Omega$, we have

$$
\begin{aligned}
\left(u_{k}\right.&\left.-\mathcal{W}_{k}\right) \Delta^{\frac{3}{2}} \mathcal{W}_{k} \cdot \nu \\
=&\frac{1}{4\pi^2}\left(\frac{1}{12\pi^2 c_{k}^{2}} \log \left(\frac{-\lambda_k}{ c_{k}^{2}}\right)-\frac{1}{6\pi^2c_k^2}\log{\frac{\pi}{2}}+\frac{O\left(R^{-1}\right)}{c_{k}^{2}}+(1-\frac{\alpha_k}{12\pi^2})+\frac{O(\delta)}{c_{k}^{2}}+o\left(c_{k}^{-2}\right)\right) \frac{K_{2, \frac{3}{2}}}{\left(r_{k} R\right)^{3}} \\
=&-\frac{1}{\pi^2\left(r_{k} R\right)^{3}}\left(\frac{1}{12\pi^2 c_{k}^{2}} \log \left(\frac{-\lambda_k}{ c_{k}^{2}}\right)-\frac{1}{6\pi^2c_k^2}\log{\frac{\pi}{2}}+     \frac{O\left(R^{-1}\right)}{c_{k}^{2}}+(1-\frac{\alpha_k}{12\pi^2})+\frac{O(\delta)}{c_{k}^{2}}+o\left(c_{k}^{-2}\right)\right)
\end{aligned}
$$
and $$
\Delta^{\frac{1}{2}}\left(u_{k}-\mathcal{W}_{k}\right) \Delta \mathcal{W}_{k} \cdot \nu=\left(\frac{O\left(R^{-1}\right)}{c_{k}^{2}}+o\left(c_{k}^{-2}\right)\right) O\left(r_{k} R\right)^{-3}.
$$

Similarly, on  $\partial B_{\delta}\left(x_k\right)\bigcap\Omega$, we have
$$
\left(u_{k}-\mathcal{W}_{k}\right) \Delta^{\frac{3}{2}} \mathcal{W}_{k} \cdot \nu=\frac{1}{\pi^2 \delta^{3}}\left(\frac{A_p}{c_{k}^{2}}+\frac{O(\delta)}{c_{k}^{2}}+o\left(c_{k}^{-2}\right)\right)
$$
and
$$
\Delta^{\frac{1}{2}}\left(u_{k}-\mathcal{W}_{k}\right) \Delta \mathcal{W}_{k} \cdot \nu=\left(\frac{O(1)}{c_{k}^{2}}+o\left(c_{k}^{-2}\right)\right) O\left(\delta^{-2}\right).
$$
Then we can obtain
\begin{align*}
 & \int_{A_{k}(R, \delta)} \Delta\left(u_{k}-\mathcal{W}_{k}\right) \cdot \Delta \mathcal{W}_{k} d x  \\
   &= \frac{1}{12\pi^2 c_{k}^{2}} \log \left(\frac{-\lambda_k}{ c_{k}^{2}}\right)-\frac{1}{6\pi^2c_k^2}\log{\frac{\pi}{2}}-\frac{A_p}{c^2_k}+\frac{O\left(R^{-1}\right)}{c_{k}^{2}}+\frac{O(\delta)}{c_{k}^{2}}+(1-\frac{\alpha_k}{12\pi^2})+o\left(c_{k}^{-2}\right).
\end{align*}
Combining the above estimates, we derive that
\begin{align}\label{esti1}
& \left\|\Delta u_{k}\right\|_{L^{2}\left(A_{k}(R, \delta)\right)}^{2}-\left\|\Delta \mathcal{W}_{k}\right\|_{L^{2}\left(A_{k}(R, \delta)\right)}^{2} \nonumber\\ & \geq  \frac{1}{6\pi^2 c_{k}^{2}} \log \left(\frac{-\lambda_k}{ c_{k}^{2}}\right)-\frac{1}{3\pi^2c_k^2}\log{\frac{\pi}{2}}-\frac{2A_p}{c^2_k}+\frac{O\left(R^{-1}\right)}{c_{k}^{2}}+\frac{O(\delta)}{c_{k}^{2}}+(2-\frac{\alpha_k}{6\pi^2})+o\left(c_{k}^{-2}\right).
\end{align}

\textbf{Step 2}. Estimates for $\left\|\Delta u_{k}\right\|_{L^{2}\left(A_{k}(R, \delta)\right)}^{2}$.

We rewrite $\left\|\Delta u_{k}\right\|_{L^{2}\left(A_{k}(R, \delta)\right)}^{2}$ as  follows:
\begin{equation}\label{add5}
\left\|\Delta u_{k}\right\|_{L^{2}\left(A_{k}(R, \delta)\right)}^{2}=1-\int_{\Omega \setminus B_{\delta}\left(x_k\right)}\left|\Delta u_{k}\right|^{2} d x-\int_{\Omega\cap B_{Rr_{k}}\left(x_k\right)}\left|\Delta u_{k}\right|^{2} d x.
\end{equation}
Since $$\Delta^{\frac{1}{2}}(\log |x|)=\frac{x}{|x|^{2}}, \Delta(\log |x|)=\frac{2}{|x|^{2}}, \Delta^{1+\frac{1}{2}}(\log |x|)=-4 \frac{x}{|x|^{4}},$$ we have

 \begin{align}\label{G1} \nu \cdot G(\delta) \Delta^{3 / 2} G(\delta) &=-\left(-\frac{1}{4 \pi^{2}} \ln |\delta|+A_p+o_{\delta}(1)\right)\left(\frac{1}{ \pi^{2}} \cdot\frac{1}{\delta^{3}}+O(1)\right) \nonumber \\ &=-\frac{1}{ \pi^{2}} \frac{1}{\delta^{3}}\left(-\frac{1}{4\pi^{2}} \ln \delta+A_p+o_{\delta}(1)\right) \end{align}
 and
 \begin{align}\label{G2} \nu \cdot \Delta^{1 / 2} G(\delta) \Delta G(\delta) &=-\left(-\frac{1}{4\pi^{2}} \frac{1}{\delta}+O(1)\right)\left(-\frac{1}{4 \pi^{2}} \frac{2}{\delta^{2}}+O(1)\right) \nonumber\\ &=-\frac{1}{8 \pi^{4}} \frac{1}{\delta^{3}}(1+o_{\delta}(1)). \end{align}

Since $$
 \begin{aligned}
\int_{\Omega \setminus B_{\delta}(x_k)}\left|\Delta G\right|^{2} d x=\int_{\Omega \cap \partial B_{\delta}(x_k)}\nu\left(-G \Delta^{3 / 2} G+\Delta^{1 / 2} G \Delta G\right) d \sigma,
\end{aligned}
$$
 we have by Lemma \ref{gre},
\begin{equation}\label{add4}\int_{\Omega \backslash  B_{\delta}(x_k)}\left|\Delta u_{k}\right|^{2} d x=\frac{1}{c_k^2}\left(-\frac{1}{4\pi^2}\log\delta-\frac{1}{8\pi^2}+A_p+o_{\delta}(1)+o_k(1)\right).\end{equation}

By Lemma \ref{limiteq}, we derive that
\begin{eqnarray}\label{add2}
\int_{\Omega \cap B_{Rr_{k}(x_k)}}\left\vert \Delta u_{k}\right\vert ^{2}dx &=&\frac{1}{%
c_{k}^{2}}\int_{B_{R}^{+}}\left\vert \Delta \psi \right\vert ^{2}dx+o\left(
\frac{1}{c_{k}^{2}}\right) \nonumber\\
&=&\frac{1}{c_{k}^{2}}\left( \int_{\partial B_{R}^{+}}\nu \left( \Delta ^{%
\frac{1}{2}}\psi \Delta \psi -\psi \Delta ^{\frac{3}{2}}\psi \right) d\sigma
\right) +o\left( \frac{1}{c_{k}^{2}}\right) \nonumber \\
&:=&\frac{1}{c_{k}^{2}}\left( I-II\right) +o\left( \frac{1}{c_{k}^{2}}\right).
\end{eqnarray}%

Observe that on $\partial B_{R}^{+}\cap \mathbb{R}_{+}^{4}$, we also have
\begin{equation*}
\psi \left( x\right) =-\frac{1}{6\pi ^{2}}\log \left( \frac{\pi }{2}\right) -%
\frac{1}{4\pi ^{2}}\log R+O\left( \frac{1}{R}\right),
\end{equation*}

\begin{equation*}
\nu \Delta ^{\frac{1}{2}}\psi \left( x\right) =-\frac{1}{4\pi ^{2}}\frac{1}{R%
}+O\left( \frac{1}{R^{2}}\right)
\end{equation*}
and

\begin{eqnarray*}
\nu \Delta ^{\frac{3}{2}}\psi &=&-\frac{1}{4\pi ^{2}}\left( \frac{-4}{\left(
\left( t+\left( \frac{2}{\pi }\right) ^{\frac{2}{3}}\right)
^{2}+|x'|^{2}\right) ^{2}}\right) \frac{\left( x',t+\left( \frac{2}{\pi }\right)
^{\frac{2}{3}}\right) \cdot \left( x',t\right) }{R}+O\left( \frac{1}{R^{4}}%
\right) \\
&=&\frac{1}{\pi ^{2}}\left( \frac{1}{\left( \left( t+\left( \frac{2}{\pi }%
\right) ^{\frac{2}{3}}\right) ^{2}+|x'|^{2}\right) ^{2}}\right) \frac{\left(
x',t+\left( \frac{2}{\pi }\right) ^{\frac{2}{3}}\right) \cdot \left(
x',t\right) }{R}+O\left( \frac{1}{R^{4}}\right) \\
&=&\frac{1}{\pi ^{2}}\left( \frac{1}{R^{4}}+O\left( \frac{1}{R^{5}}\right)
\right) \left( R+O\left( 1\right) \right) +O\left( \frac{1}{R^{4}}\right) \\
&=&\frac{1}{\pi ^{2}}\frac{1}{R^{3}}+O\left( \frac{1}{R^{4}}\right).
\end{eqnarray*}
Hence we can write
\begin{equation*}
II=\int_{\partial B_{R}^{+}\cap \mathbb{R}_{+}^{4}}\nu \psi \Delta ^{\frac{3}{2}}\psi
d\sigma +\int_{\partial B_{R}^{+}\cap \partial \mathbb{R}_{+}^{4}}\nu \psi \Delta ^{%
\frac{3}{2}}\psi d\sigma :=II_{1}+II_{2},
\end{equation*}%
where%
\begin{eqnarray*}
II_{1} &=&\int_{\partial B_{R}^{+}\cap  \mathbb{R}_{+}^{4}}\nu \psi \Delta ^{%
\frac{3}{2}}\psi d\sigma \\
&=&\pi ^{2}R^{3}\left( -\frac{1}{6\pi ^{2}}\log \left( \frac{\pi }{2}\right)
-\frac{1}{4\pi ^{2}}\log R+O\left( \frac{1}{R}\right) \right) \cdot \\
&&\left( \frac{1}{\pi ^{2}}\frac{1}{R^{3}}+O\left( \frac{1}{R^{4}}\right)
\right) \\
&=&-\frac{1}{4\pi ^{2}}\log R-\frac{1}{6\pi ^{2}}\log \left( \frac{\pi }{2}%
\right) +O\left( \frac{\log R}{R}\right).
\end{eqnarray*}%
\bigskip

Since $\frac{\partial }{\partial t }\Delta \psi =\exp \left(
24\pi ^{2}\psi \right) $ for $x=(x',0)\in \partial\mathbb{R}_{+}^{4}$, let $\psi_0(x')=\psi(x',0)$, we have

\begin{eqnarray*}
II_{2} &=&\int_{\partial B_{R}^{+}\cap \partial \mathbb{R}_{+}^{4}}\nu \psi \Delta ^{%
\frac{3}{2}}\psi d\sigma \\
&=&\int_{B_{R}^{3}}-\exp \left( 24\pi ^{2}\psi_0\left( x'\right)
\right) \psi_0\left( x'\right) dx' \\
&=&-\int_{\mathbb{R}^{3}}\frac{\left( \left( \frac{2}{\pi }\right) ^{\frac{2}{3}%
}\right) ^{3}}{\left( \frac{\pi }{2}\right) ^{2}\left( |x'|^{2}+\left( \left(
\frac{2}{\pi }\right) ^{\frac{2}{3}}\right) ^{2}\right) ^{3}}\psi_0%
\left( x'\right) dx'+O\left( \frac{1}{R}\right) \\
&=&-\psi \left( 0,\left( \frac{2}{\pi }\right) ^{\frac{2}{3}}\right)
+O\left( \frac{1}{R}\right) \\
&=&\frac{1}{4\pi ^{2}}\log  2 -\frac{1}{16\pi ^{2}}+O\left(
\frac{1}{R}\right),
\end{eqnarray*}
where $B_{R}^{3}$ denotes the 3 dimensional  balls with radius $R$.

So, we have
\begin{eqnarray}\label{add1}
II &=&-\frac{1}{4\pi ^{2}}\log R-\frac{1}{6\pi ^{2}}\log \left( \frac{\pi }{2%
}\right) + \frac{1}{4\pi ^{2}}\log2 -\frac{1}{16\pi ^{2}}+O\left(
\frac{\log R}{R}\right) \nonumber \\
&=&-\frac{1}{4\pi ^{2}}\log \frac{R}{2}-\frac{1}{6\pi ^{2}}\log \left( \frac{%
\pi }{2}\right) -\frac{1}{16\pi ^{2}}+O\left( \frac{\log R}{R}\right).
\end{eqnarray}

Now, we estimate $I$, and rewrite it as

\begin{eqnarray*}
I &=&\int_{\partial B_{R}^{+}}\nu \Delta ^{\frac{1}{2}}\psi \Delta \psi
d\sigma \\
&=&\int_{\partial B_{R}^{+}\cap \mathbb{R}_{+}^{4}}\nu \Delta ^{\frac{1}{2}}\psi
\Delta \psi d\sigma +\int_{\partial B_{R}^{+}\cap \partial \mathbb{R}_{+}^{4}}\nu
\Delta ^{\frac{1}{2}}\psi \Delta \psi d\sigma \\
&:=&I_{1}+I_{2}.
\end{eqnarray*}
Since on $\partial B_{R}^{+}\cap \mathbb{R}_{+}^{4}$, we have
\begin{equation*}
\nu \Delta ^{\frac{1}{2}}\psi \left( x\right) =-\frac{1}{4\pi ^{2}}\frac{1}{R%
}+O\left( \frac{1}{R^{2}}\right)
\end{equation*}
and $\Delta \psi =-\frac{1}{2\pi ^{2}} \frac{1}{R ^{2}} $, so we get

\begin{eqnarray*}
I_{1} &=&\int_{\partial B_{R}^{+}\cap \mathbb{R}_{+}^{4}}\nu \Delta ^{\frac{1}{2}%
}\psi \Delta \psi d\sigma \\
&=&\pi ^{2}R^{3}\left( -\frac{1}{4\pi ^{2}}\frac{1}{R}+O\left( \frac{1}{R^{2}%
}\right) \right) \left( -\frac{1}{4\pi ^{2}}\left( \frac{2}{R^{2}}\right)
\right) \\
&=&\frac{1}{8\pi ^{2}}+O\left( \frac{1}{R}\right).
\end{eqnarray*}
Using the fact that $\frac{\partial \psi }{\partial t}=0$ on $\partial \mathbb{R}_{+}^{4},$ we obtain
\begin{equation*}
I_{2}=\int_{\partial B_{R}^{+}\cap \partial \mathbb{R}_{+}^{4}}\nu \Delta ^{\frac{1}{2%
}}\psi \Delta \psi d\sigma =0.
\end{equation*}
Hence, we have
\begin{equation}\label{add3}
I=\frac{1}{8\pi ^{2}}+O\left( \frac{1}{R}\right).
\end{equation}

By \eqref{add2},\eqref{add1} and \eqref{add3}, we get

\begin{eqnarray}\label{addc1}
&&\int_{\Omega \cap B_{Rr_{k}(x_k)}}\left\vert \Delta u_{k}\right\vert ^{2}dx \nonumber\\
&=&\frac{1}{c_{k}^{2}}\left( \frac{1}{8\pi ^{2}}+O\left( \frac{1}{R}\right)
-\left( -\frac{1}{4\pi ^{2}}\log \frac{R}{2}-\frac{1}{6\pi ^{2}}\log
\frac{\pi }{2} \right. \right. \nonumber \\
&&\left. \left. -\frac{1}{16\pi ^{2}}+O\left( \frac{\log R}{R}\right)
\right) \right) +o\left( \frac{1}{c_{k}^{2}}\right)\nonumber \\
&=&\frac{1}{c_{k}^{2}}\left( \frac{3}{16\pi ^{2}}+\frac{1}{4\pi ^{2}}\log
\frac{R}{2}+\frac{1}{6\pi ^{2}}\log \frac{\pi }{2} \right) +%
\frac{1}{c_{k}^{2}}O\left( \frac{\log R}{R}\right).
\end{eqnarray}

Combining \eqref{add4} and \eqref{add5}, we derive that
\begin{eqnarray}\label{addb4}
\left\Vert \Delta u_{k}\right\Vert _{A_{k}\left( R,\delta \right) }^{2}
&=&1-\frac{1}{c_{k}^{2}}\left( -\frac{1}{4\pi ^{2}}\log \delta -\frac{1}{%
8\pi ^{2}}+A_{p}+o_{\delta ,k}\left( 1\right) \right) \nonumber\\
&&-\frac{1}{c_{k}^{2}}\left( \frac{3}{16\pi ^{2}}+\frac{1}{4\pi ^{2}}\log
\frac{R}{2}+\frac{1}{6\pi ^{2}}\log  \frac{\pi }{2} \right) +%
\frac{1}{c_{k}^{2}}O\left( \frac{\log R}{R}\right) \nonumber\\
&=&1-\frac{1}{c_{k}^{2}}\left( \frac{1}{16\pi ^{2}}+\frac{1}{4\pi ^{2}}\log
\frac{R}{2\delta }+\frac{1}{6\pi ^{2}}\log  \frac{\pi }{2}
+A_{p}+O\left( \frac{\log R}{R}\right)+o_{\delta,k}(1)\right).
\end{eqnarray}

\textbf{Step 3}. Estimates for $\left\|\Delta \mathcal{W}_{k}\right\|_{L^{2}\left(A_{k}(R, \delta)\right)}^{2}$.

Since
\begin{eqnarray}\label{addb1}
\int_{A_{k}\left( R,\delta \right) }\left\vert \Delta \mathcal{W}_{k}\right\vert
^{2}dx &=&-\int_{\partial A_{k}\left( R,\delta \right)
}\nu \left( \mathcal{W}_{k}\Delta ^{\frac{3}{2}}\mathcal{W}_{k}-\Delta ^{\frac{1}{2}}\mathcal{W}_{k}\Delta \mathcal{W}_{k}\right) d\sigma \nonumber
\\
&=&-\int_{\Omega \cap \partial B_ \delta(x_k) }\nu \left(
\mathcal{W}_{k}\Delta ^{\frac{3}{2}}\mathcal{W}_{k}-\Delta ^{\frac{1}{2}}\mathcal{W}_{k}\Delta \mathcal{W}_{k}\right) d\sigma \nonumber \\
&&+\int_{\Omega \cap \partial B_{ Rr_{k}(x_k))} }\nu \left(
\mathcal{W}_{k}\Delta ^{\frac{3}{2}}\mathcal{W}_{k}-\Delta ^{\frac{1}{2}}\mathcal{W}_{k}\Delta \mathcal{W}_{k}\right) d\sigma \nonumber \\
&:=&- III_{1}+III_{2}.
\end{eqnarray}

From \eqref{add6} and \eqref{add7}, we have
\begin{eqnarray*}
& III_{2}& \\ &=&\left( \left( \frac{\alpha_k}{12\pi^2}c_{k}-\frac{1}{12\pi ^{2}c_{k}}\log \frac{%
-\lambda _{k}}{c_{k}^{2}}-\frac{1}{4\pi ^{2}c_{k}}\log R\right) \left( -%
\frac{K_{2,\frac{3}{2}}}{4\pi ^{2}c_{k}R^{3}r_{k}^{3}}\right)-\frac{K_{2,%
\frac{1}{2}}}{4\pi ^{2}c_{k}Rr_{k}}\frac{K_{2,1}}{4\pi
^{2}c_{k}R^{2}r_{k}^{2}}+O(\delta)\right) \\ &\cdot &\pi ^{2}R^{3}r_{k}^{3}=1-\frac{1}{c_{k}^{2}}\left( \frac{1}{12\pi ^{2}}\log \frac{-\lambda _{k}%
}{c_{k}^{2}}+\frac{1}{4\pi ^{2}}\log R+\frac{1}{8\pi ^{2}}+O(\delta)\right)\label{addb2}.
\end{eqnarray*}
Similarly, we can also obtain
\begin{eqnarray*}
III_{1} &=&\int_{\Omega\cap \partial B_{\delta }}\nu \left( \mathcal{W}_{k}\Delta ^{\frac{3}{2}%
}\mathcal{W}_{k}-\Delta ^{\frac{1}{2}}\mathcal{W}_{k}\Delta \mathcal{W}_{k}\right) d\sigma \\
&=&\frac{\nu}{c^2_{k}} \left(\left( \frac{-1}{4\pi ^{2}}\log \delta+O(\delta) \right)
\left( \frac{-K_{2,\frac{3}{2}}}{4\pi ^{2}\delta ^{3}}e_{3}(x-x_k)+O(\delta)\right) -%
\frac{-K_{2,\frac{1}{2}}e_{1}(x-x_k)}{4\pi ^{2}\delta }\frac{-K_{2,\frac{2}{2}}%
}{4\pi ^{2}\delta ^{2}}\right) \pi ^{2}\delta ^{3}+\frac{O(\delta)}{c^2_{k}} \\
&=&\frac{1}{c_{k}^{2}}\left( \frac{-1}{4\pi ^{2}}\log \delta -\frac{1}{8\pi
^{2}}+O(\delta)\right).
\end{eqnarray*}
Combining \eqref{addb1} and \eqref{addb2}, we derive that
\begin{eqnarray}\label{addb3}
\int_{A_{k}\left( R,\delta \right) }\left\vert \Delta \mathcal{W}_{k}\right\vert
^{2}dx &=&1-\frac{1}{c_{k}^{2}}\left( \frac{-1}{4\pi ^{2}}\log \delta -\frac{1}{%
8\pi ^{2}}+\frac{1}{12\pi ^{2}}\log \frac{-\lambda _{k}}{c_{k}^{2}}+\frac{1}{%
4\pi ^{2}}\log R+\frac{1}{8\pi ^{2}}+O(\delta)\right) \nonumber \\
&=&1-\frac{1}{c_{k}^{2}}\left( \frac{-1}{4\pi ^{2}}\log \frac{\delta }{R}+%
\frac{1}{12\pi ^{2}}\log \frac{-\lambda _{k}}{c_{k}^{2}}+O(\delta)\right).
\end{eqnarray}

Now, we are in position to give the sharp upper bound for $\lim\limits_{k\rightarrow\infty}\frac{-\lambda _{k}}{c_{k}^{2}}$. Indeed, from \eqref{addb4}, \eqref{addb3} and  \eqref{esti1}, we can get
\begin{equation*}\begin{split}
&\left\Vert \Delta u_{k}\right\Vert _{A_{k}\left( R,\delta \right)
}^{2}-\int_{A_{k}\left( R,\delta \right) }\left\vert \Delta \mathcal{W}_{k}\right\vert
^{2}dx \\
&\ \ =1-\frac{1}{c_{k}^{2}}\left( \frac{1}{16\pi ^{2}}+\frac{1}{4\pi ^{2}}\log
\frac{R}{2\delta }+\frac{1}{6\pi ^{2}}\log \left( \frac{\pi }{2}\right)
+A_{p}\right)  \\
&\ \ \ \ -1+\frac{1}{c_{k}^{2}}\left( \frac{-1}{4\pi ^{2}}\log \frac{\delta }{R}+%
\frac{1}{12\pi ^{2}}\log \frac{-\lambda _{k}}{c_{k}^{2}}+O(\delta)\right)  \\
&\ \ =\frac{1}{c_{k}^{2}}\left( \frac{-1}{4\pi ^{2}}\log \frac{\delta }{R}+%
\frac{1}{12\pi ^{2}}\log \frac{-\lambda _{k}}{c_{k}^{2}}-\frac{1}{16\pi ^{2}}%
-\frac{1}{4\pi ^{2}}\log \frac{R}{2\delta }-\frac{1}{6\pi ^{2}}\log \left(
\frac{\pi }{2}\right) -A_{p}+O(\delta)\right) \\
&\ \ =\frac{1}{c_{k}^{2}}\left( \frac{-1}{4\pi ^{2}}\log \frac{\delta }{R}+%
\frac{1}{12\pi ^{2}}\log \frac{-\lambda _{k}}{c_{k}^{2}}-\frac{1}{16\pi ^{2}}%
-\frac{1}{4\pi ^{2}}\log \frac{R}{2\delta }-\frac{1}{6\pi ^{2}}\log \left(
\frac{\pi }{2}\right) -A_{p}+O(\delta)\right)  \\
&\ \ \geq\frac{1}{c_{k}^{2}}\left( \frac{1}{6\pi ^{2}}\log \frac{-\lambda _{k}%
}{c_{k}^{2}}-2A_{p}-\frac{1}{3\pi ^{2}}\log
\frac{\pi }{2}+o_{\delta,k}(1)+o(R^{-1})\right)+2-\frac{\alpha_k}{6\pi^2},
\end{split}\end{equation*}
which implies
\begin{equation*}
\lim_{k\rightarrow\infty}\frac{-\lambda _{k}}{c_{k}^{2}}\leq 2\pi ^{2}\exp \left( -\frac{3}{4}+12\pi ^{2}A_{p}\right).
\end{equation*}

Therefore, we can conclude the following
\begin{proposition}
\label{main proposition}
 If $c_k  \to \infty$, then $$\mathop {\sup }\limits_{u \in W^{2,2} \left( \Omega \right),\left\| \Delta u \right\|_{2}  \leqslant 1} \int_{\partial\Omega}  {e^{12\pi^2 u^2 } dx}  \leqslant \left| \partial\Omega  \right| + 2\pi^2 e^{12\pi^2 A_p-\frac{3}{4}}.$$
\end{proposition}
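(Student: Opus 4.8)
The plan is to combine the concentration identity of Lemma~\ref{concentre} with a capacity-type estimate on the neck region. First recall that, by Lemma~\ref{regular}, the subcritical functional $I_{\alpha_k}$ is attained at $u_k$, so $S(\alpha_k)=\int_{\partial\Omega}e^{\alpha_k u_k^2}\,d\sigma$; since $\alpha_k\nearrow 12\pi^2$, monotone convergence applied to any fixed admissible test function gives $\liminf_k S(\alpha_k)\ge S(12\pi^2)$, hence $S(12\pi^2)=\lim_k\int_{\partial\Omega}e^{\alpha_k u_k^2}\,d\sigma$. Lemma~\ref{concentre} then identifies this limit as $|\partial\Omega|+\lim_k(-\lambda_k/c_k^2)$. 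Therefore it suffices to establish the sharp bound $\lim_{k\to\infty}(-\lambda_k/c_k^2)\le 2\pi^2\exp\!\big(12\pi^2 A_p-\tfrac34\big)$.

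The heart of the argument is the neck analysis of Subsection~\ref{Neck}. Fix $R$ large and $\delta$ small, and work on the half-annulus $A_k(R,\delta)=\{x\in\Omega:Rr_k\le|x-x_k|\le\delta\}$. One compares $\|\Delta u_k\|_{L^2(A_k(R,\delta))}^2$ with the energy of the corrected function $\mathcal{W}_k=-\frac{1}{4\pi^2 c_k}(\log|x-x_k|+\rho_k)$, where $\rho_k\in C^3(\bar\Omega)$ with $\|\rho_k\|_{C^3}=O(\delta)$ is chosen so that $\partial_\nu\mathcal{W}_k=\partial_\nu\Delta\mathcal{W}_k=0$ on $\partial\Omega$. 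The three inputs are: the explicit profile $\psi$ of Lemma~\ref{limiteq}, which expands $u_k$ and its fractional Laplacians on $\partial B_{Rr_k}(x_k)$; the Green function expansion $G=-\frac{1}{4\pi^2}\log|x-p|+A_p+\varphi$ of Lemma~\ref{gre}, which expands $c_ku_k$ on $\partial B_\delta(x_k)$; and the local bound $\int_{B_\rho(x_k)\cap\Omega}|u_k\Delta u_k|\,dx\le C\rho^2$ of Lemma~\ref{lem5}, which controls the inner-cap energy. Using $|a|^2-|b|^2\ge 2(a-b)\cdot b$ one gets \eqref{A}, and integrating by parts its right-hand side reduces to boundary integrals over $\partial B_{Rr_k}(x_k)$ and $\partial B_\delta(x_k)$ only, since the contribution over the part of $\partial\Omega$ inside $A_k(R,\delta)$ vanishes thanks to the Neumann conditions imposed on $\mathcal{W}_k$. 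Evaluating these boundary terms with the above expansions yields the lower bound \eqref{esti1}.

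In parallel one computes both energies exactly: $\|\Delta u_k\|_{L^2(A_k)}^2$ follows from $\|\Delta u_k\|_2^2=1$ by subtracting the exterior energy $\int_{\Omega\setminus B_\delta(x_k)}|\Delta u_k|^2$ (from the Green function, \eqref{add4}) and the inner-cap energy $\int_{\Omega\cap B_{Rr_k}(x_k)}|\Delta u_k|^2$ (from $\psi$, \eqref{addc1}), giving \eqref{addb4}; while $\|\Delta\mathcal{W}_k\|_{L^2(A_k)}^2$ is computed directly, giving \eqref{addb3}. Comparing this exact value with \eqref{esti1} — and discarding the nonnegative leftover $2-\alpha_k/(6\pi^2)$ — all the $\log R$ and $\log\delta$ contributions cancel (using $\log\frac{\delta}{R}+\log\frac{R}{2\delta}=-\log 2$), and letting $k\to\infty$, then $\delta\to 0$, then $R\to\infty$ forces $\log\frac{-\lambda_k}{c_k^2}\le\log(2\pi^2)-\tfrac34+12\pi^2 A_p+o(1)$. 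Feeding this into Lemma~\ref{concentre} gives $S(12\pi^2)\le|\partial\Omega|+2\pi^2 e^{12\pi^2 A_p-3/4}$, which is the claimed bound.

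The main obstacle is the neck estimate itself. Unlike the first-order trace case, the extremal of the relevant capacity problem is not explicitly computable, so one must compare against a fixed biharmonic-type comparison function $\mathcal{W}_k$ rather than a genuine minimizer; the delicate point is then that $\partial A_k(R,\delta)$ meets $\partial\Omega$ along a region where $u_k$ neither vanishes nor has controlled asymptotics, and the correction $\rho_k$ — engineered so that $\mathcal{W}_k$ has vanishing Neumann data for both $\mathcal{W}_k$ and $\Delta\mathcal{W}_k$ — is exactly what removes every integral over $\partial\Omega$ after integration by parts. Beyond that, the bookkeeping is fragile: one must track the error terms $O(\delta)$, $O(R^{-1}\log R)$ and $o_k(1)$ and send them to zero in the right order ($k$ first, then $\delta$, then $R$), and it is essential to notice that the non-$o(1)$ remainder $2-\alpha_k/(6\pi^2)$ carries the favorable sign and can simply be dropped.
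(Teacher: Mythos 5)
Your proposal is correct and follows essentially the same route as the paper's Subsection~\ref{Neck}: reduce to bounding $\lim_k(-\lambda_k/c_k^2)$ via Lemma~\ref{concentre}, then run the neck comparison against the Neumann-corrected logarithmic profile $\mathcal{W}_k$ on $A_k(R,\delta)$, using Lemma~\ref{limiteq} and Lemma~\ref{gre} for the inner and outer expansions and the three-step energy bookkeeping \eqref{esti1}, \eqref{addb4}, \eqref{addb3}. Your observations about the role of the correction $\rho_k$ in eliminating the integrals over $\partial\Omega$ after integration by parts, the cancellation $\log\tfrac{\delta}{R}+\log\tfrac{R}{2\delta}=-\log 2$, and the favorable sign of the remainder $2-\alpha_k/(6\pi^2)$ are exactly the points the paper relies on.
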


\section{Existence of extremal functions}

 In this section, we assume $A_p=\max_{p\in\partial\Omega}A_p$ for some $p\in\partial\Omega$.
 Now we construct a blowing up sequence $\phi_{\varepsilon}$ with $\int_{\Omega}\left|\Delta \phi_{\varepsilon}\right|^{2}=1$, and
$$
\int_{\partial \Omega} e^{12\pi^2\left(\phi_{\varepsilon}-\overline{\phi_{\varepsilon}}\right)^{2}}d\sigma> |\partial \Omega|+2 \pi^2 e^{12\pi^2 A_{p}-\frac{3}{4}}, \quad \text { where } \overline{\phi_{\varepsilon}}=\frac{1}{| \Omega|} \int_{ \Omega} \phi_{\varepsilon}dx.
$$
Take a Fermi coordinate system $(U, \theta)$ around $p$ such that $\theta(p)=(0,0)$, $\theta$ maps $\partial \Omega \cap U$ inside $\partial \mathbb{R}_{+}^{4}$, and for any $\varepsilon>0$ and $x \in \partial\Omega$, there exists $\delta>0$ such that
$$
(1-\varepsilon) \theta \leq g=g_{i j} d x_{i} \otimes d x_{j}+d t \otimes d t \leq(1+\varepsilon) \theta \quad \text { in } M_{\delta},
$$where $M_{\delta}=\left\{x \in \Omega_{\delta}:\right.$ $\operatorname{dist}(\pi(x), p) \leq \delta\}$.

Set
$$
\tilde{\phi}_{\varepsilon}(x', t)=C+\frac{-(1 / 8 \pi^2) \log \left((\frac{\pi}{2})^{4/3} |x'|^{2} / \varepsilon^{2}+((\frac{\pi}{2})^{2/3} t / \varepsilon+1)^{2}\right)+B+g_\varepsilon(x',t)}{C}
$$
for some constants $B, C$, where $g_\varepsilon(x',t)=\frac{1}{2^{\frac{8}{3}}\pi^{\frac{4}{3}}}\frac{t/\varepsilon}{  \left(1+\left({\frac{\pi}{2}}\right)^{\frac{2}{3}}\frac{t}{\varepsilon}  \right)^2+  \left({\frac{\pi}{2}}\right)^{\frac{4}{3}}\frac{|x'|^2}{\varepsilon^2}  }$.

Let $B_{r}^{+}=B_{r}(p)\cap\Omega$ and $R$ be a function of $\varepsilon$ such that $R \rightarrow+\infty$ and $R \varepsilon \rightarrow 0$ as $\varepsilon \rightarrow 0$. Set
$$
\phi_{\varepsilon}= \begin{cases}\tilde{\phi}_{\varepsilon} \circ \theta(x) & \text { if } x \in B_{R \varepsilon}^{+}, \\ (G-\eta\beta) / C & \text { if } x \in B_{2R \varepsilon}^{+} \backslash B_{R \varepsilon}^{+},\\G/C & \text { if } x \in \Omega \backslash B_{2R \varepsilon}^{+},\end{cases}
$$
where $A, C$ are constants to be defined later, $\beta=G-C\tilde{\phi}_{\varepsilon} \circ \theta(x)$, $\eta$ is some radial function in  $C^{\infty}_0(B_{2R\varepsilon}(p))$ with $\eta\equiv1$ on $B_{R\varepsilon}(p)$, and $ |\nabla\eta|=O(\frac{1}{R\varepsilon}),|\Delta\eta|=O(\frac{1}{(R\varepsilon)^2})$. One can easily verify that $\frac{\partial \phi_{\varepsilon}(x)}{\partial \nu}=0$ for any $x\in\partial\Omega$.

Now, we estimate $\int_{\Omega}|\Delta\phi_\varepsilon|^2dx$, rewrite it as
\begin{align}\label{addc6}\int_{\Omega}|\Delta\phi_\varepsilon|^2dx=\left(\int_{B^{+}_{R\varepsilon}}+\int_{\Omega\setminus B^{+}_{R\varepsilon}}\right)|\Delta\phi_\varepsilon|^2dx:=I_1+I_2.\end{align}

Since
\begin{align}\label{addc7}
I_2&=\int_{\Omega\backslash B_{R\varepsilon}^{+}}\left\vert \Delta\phi
_{\varepsilon}\right\vert ^{2}  =\int_{\Omega\backslash B_{R\varepsilon
}^{+}}\frac{\left\vert \Delta G\right\vert ^{2}}{C^{2}}+\int_{B_{2R\varepsilon
}^{+}\backslash B_{R\varepsilon}^{+}}\frac{\left\vert \Delta\left(
\eta\left(  G-C\tilde{\phi}_{\varepsilon} \circ \theta(x)\right)  \right)  \right\vert ^{2}}{C^{2}%
}\nonumber \\&-\frac{2}{C^{2}}\int_{B_{2R\varepsilon}^{+}\backslash B_{R\varepsilon}^{+}%
}\left\vert \nabla G\nabla\left(  G-C\tilde{\phi}_{\varepsilon} \circ \theta(x)\right)  \right\vert
^{2}\nonumber\\& :=II_{1}+II_{2}+II_{3}.%
\end{align}

Let $C$ satisfy%

\begin{equation}\label{addc9}
C+\frac{-\frac{1}{8\pi^{2}}\log\left(  \left(  \frac{\pi}{2}\right)
^{4/3}R^{2}\right)  +B}{C}=\frac{-\frac{1}{4\pi^{2}}\log R\varepsilon+A_{p}%
}{C},%
\end{equation}
by direct computing, one can easily verify that \begin{equation}\label{addc4}\left\vert II_{2}\right\vert ,\left\vert
II_{3}\right\vert =\frac{1}{C^{2}}\left(O\left(  R\varepsilon\right) +O\left( R^{-1}\right) \right).\end{equation}
Similar as \eqref{add4} and \eqref{addc1}, we can obtain
\begin{align}\label{addc5}
II_{1}  &  =\int_{\Omega\backslash B_{R\varepsilon}^{+}}\frac{\left\vert \Delta
G\right\vert ^{2}}{C^{2}}=\int_{\partial\left(  \Omega\backslash
B_{R\varepsilon}^{+}\right)  }\nu\left(  -G\Delta^{3/2}G+\Delta^{1/2}G\Delta
G\right)  d\sigma  \nonumber \\
&  =\frac{1}{C^{2}}\left(  -\frac{1}{4\pi^{2}}\log R\varepsilon-\frac{1}%
{8\pi^{2}}+A_{p}+O\left(  R\varepsilon\right)  \right)
\end{align}
and
\begin{align}\label{addc8}
I_1=\int_{B^{+}_{R\varepsilon}}\left\vert \Delta\phi_{\varepsilon}\right\vert
^{2}=\frac{1}{C^{2}}\left(  \frac{3}{16\pi^{2}}+\frac{1}{4\pi^{2}}\log\frac
{R}{2}+\frac{1}{6\pi^{2}}\log\frac{\pi}{2}+O\left(  \frac{\log R}{R}\right).
\right)
\end{align}
Combining \eqref{addc6},\eqref{addc7},\eqref{addc4},\eqref{addc5} and \eqref{addc8}, we have%

\begin{align*}
\int_{\Omega}\left\vert \Delta\phi_{\varepsilon}\right\vert ^{2} &  =\frac
{1}{C^{2}}\left(  \frac{3}{16\pi^{2}}+\frac{1}{4\pi^{2}}\log\frac{R}{2}%
+\frac{1}{6\pi^{2}}\log\frac{\pi}{2}+O\left(  \frac{\log R}{R}\right)\right)  \\
&  +\frac{1}{C^{2}}\left(  -\frac{1}{4\pi^{2}}\log R\varepsilon-\frac{1}%
{8\pi^{2}}+A_{p}+O\left(  R\varepsilon\right)  +O\left(
\frac{1}{R}\right)  \right)  \\
&  =\frac{1}{C^{2}}\left(  \frac{1}{16\pi^{2}}+\frac{1}{6\pi^{2}}\log\frac
{\pi}{2}+\frac{1}{4\pi^{2}}\log\frac{1}{2\varepsilon}+A_{p}\right.  \\
&  \left.  +O\left(  R\varepsilon\right)  +O\left(  \frac
{\log R}{R}\right)  \right)  .
\end{align*}
To ensure that $\int_{\Omega}\left\vert \Delta\phi_{\varepsilon}\right\vert
^{2}=1$, we set%

\begin{equation}\label{addc2}
C^{2}=\frac{1}{16\pi^{2}}+\frac{1}{6\pi^{2}}\log\frac{\pi}{2}+\frac{1}%
{4\pi^{2}}\log\frac{1}{2\varepsilon}+A_{p}+O\left(  R\varepsilon\right)  +O\left(  \frac{\log R}{R}\right).
\end{equation}

On the other hand, from (\ref{addc9}), we have%

\[
C^{2}=-\frac{1}{4\pi^{2}}\log\varepsilon+A_{p}-B+\frac{1}{6\pi^{2}}\log
\frac{\pi}{2}.
\]
Therefore,
\begin{equation}\label{addc3}
B=\frac{1}{4\pi^{2}}\log2-\frac{1}{16\pi^{2}}+O\left(  R\varepsilon\right)  +O\left(  \frac{\log R}{R}\right).
\end{equation}

A straightforward computation gives%

\begin{align*}
\bar{\phi}_{\varepsilon} & =\frac{1}{\left\vert \Omega\right\vert }%
\int_{\Omega}\phi_{\varepsilon}=\frac{1}{C}\left(  O\left(  \left(
R\varepsilon\right)  ^{4}\log R\right)  +O\left(  \left(  R\varepsilon\right)
^{4}\log\varepsilon\right)  +O\left(  \left(  R\varepsilon\right)  ^{4}\log
R\varepsilon\right)  \right)  \\
& =\frac{1}{C}\left(  O\left(  \left(  R\varepsilon\right)  ^{4}\log R\right)
+O\left(  \left(  R\varepsilon\right)  ^{4}\log\varepsilon\right)  \right).
\end{align*}

Then
\begin{align*}
& \int_{\partial \Omega}\exp\left(
12\pi^{2}\left(  \phi_{\varepsilon}-\bar{\phi}_{\varepsilon}\right)
^{2}\right)  d\sigma\\
& \geq \int_{\partial B_{R\varepsilon}^{+}\cap\partial \mathbb{R}_{4}^{+}}\exp\left(
12\pi^{2}\left(  \tilde{\phi}_{\varepsilon}-\bar{\phi}_{\varepsilon}\right)
^{2}\left(  x',t\right)  \right)  dx'dt\\
&  \geq\int_{\partial B_{R\varepsilon}^{+}\cap\partial \mathbb{R}_{4}^{+}}\exp\left(
12\pi^{2}C^{2}-3\log\left(  \left(  \frac{\pi}{2}\right)  ^{4/3}%
\frac{\left\vert x'\right\vert ^{2}}{\varepsilon^{2}}+1\right)  +24\pi
^{2}B-24\pi^{2}C\bar{\phi}_{\varepsilon}\right)  dx'\\
&  =\exp\left(  12\pi^{2}C^{2}+24\pi^{2}B+O\left(  \left(  R\varepsilon
\right)  ^{4}\log R\right)  +O\left(  \left(  R\varepsilon\right)  ^{4}%
\log\varepsilon\right)  \right)  \int_{B_{R\varepsilon}^{3}}\frac{1}{\left(
\left(  \frac{\pi}{2}\right)  ^{4/3}\frac{\left\vert x'\right\vert ^{2}%
}{\varepsilon^{2}}+1\right)  ^{3}}dx'.
\end{align*}
Let $\left(  \frac{\pi}{2}\right)  ^{2/3}\frac{x'}{\varepsilon}=\tilde{x}$.
Then
\begin{align*}
\int_{B_{R\varepsilon}^{3}}\frac{1}{\left(  \left(  \frac{\pi}{2}\right)
^{4/3}\frac{\left\vert x'\right\vert ^{2}}{\varepsilon^{2}}+1\right)  ^{3}}dx'
&  =\left(  \frac{2}{\pi}\right)  ^{2}\varepsilon^{3}\int_{B_{\left(
\frac{\pi}{2}\right)  ^{2/3}R}^{3}}\frac{1}{\left(  \tilde{x}^{2}+1\right)
^{3}}d\tilde{x}\\
&  =\left(  \frac{2}{\pi}\right)  ^{2}\varepsilon^{3}\int_{0}^{\left(
\frac{\pi}{2}\right)  ^{2/3}R}\frac{4\pi r^{2}}{\left(  r^{2}+1\right)  ^{3}%
}dr\\
&  =\left(  \frac{2}{\pi}\right)  ^{2}\varepsilon^{3}4\pi\int_{0}^{\left(
\frac{\pi}{2}\right)  ^{2/3}R}\frac{r^{2}}{\left(  r^{2}+1\right)  ^{3}}dr\\
&  =\varepsilon^{3}\left(  1+O\left(  \frac{1}{R}\right)  \right)  ,
\end{align*}
where we have used the fact $\int_{0}^{\infty}\frac{r^{2}}{\left(
r^{2}+1\right)  ^{3}}dr=\frac{1}{16}\pi$.

Hence, it follows from \eqref{addc2} and \eqref{addc3} that %
\begin{align*}
&  \int_{\partial B_{R\varepsilon}^{+}\cap\partial \mathbb{R}_{4}^{+}}\exp\left(
12\pi^{2}\left(  \phi_{\varepsilon}-\bar{\phi}_{\varepsilon}\right)
^{2}\left(  x',t\right)  \right)  dx'dt\\
&  \geq\varepsilon^{3}\left(  1+O\left(  \frac{1}{R}\right)  \right)  \exp\left(
12\pi^{2}C^{2}+24\pi^{2}B+O\left(  \left(  R\varepsilon\right)  ^{4}\log
R\right)  +O\left(  \left(  R\varepsilon\right)  ^{4}\log\varepsilon\right)
\right)  \\
&  =\varepsilon^{3}\exp\left(  12\pi^{2}\left(  \frac{1}{16\pi^{2}}+\frac
{1}{6\pi^{2}}\log\frac{\pi}{2}+\frac{1}{4\pi^{2}}\log\frac{1}{2\varepsilon
}+A_{p}\right)  \right.  \\
&  \left.  +24\pi^{2}\left(  \frac{1}{4\pi^{2}}\log2-\frac{1}{16\pi^{2}%
}\right)  \right)  + O(R\varepsilon) +O\left(  \frac{\log R}{R} +O\left(  \left(
R\varepsilon\right)  ^{4}\log R\right)  +O\left(  \left(  R\varepsilon\right)
^{4}\log\varepsilon\right)  \right)  \\
&  =\exp\left(  -\frac{3}{4}+2\log\pi+\log2+12\pi^{2}A_{p}\right)  +O\left(
\left(  R\varepsilon\right)  ^{4}\log R\right)  +O\left(  \left(
R\varepsilon\right)  ^{4}\log\varepsilon\right)  +O(R\varepsilon)+O\left(  \frac{\log R}{R}\right)
\\
&  =2\pi^{2}\exp\left(  -\frac{3}{4}+12\pi^{2}A_{p}\right)  +O\left(  \left(
R\varepsilon\right)  ^{4}\log R\right)  +O\left(  \left(  R\varepsilon\right)
^{4}\log\varepsilon\right) +O(R\varepsilon) +O\left(  \frac{\log R}{R}\right)  .
\end{align*}

Moreover, we have
\begin{align*}
\int_{\Omega\backslash\partial B_{R\varepsilon}^{+}}\exp\left(  12\pi
^{2}\left(  \phi_{\varepsilon}-\bar{\phi}_{\varepsilon}\right)  ^{2}\right)
d\sigma  & \geq\int_{\partial\Omega\backslash\partial B_{R\varepsilon}^{+}%
}\left(  1+12\pi^{2}\left(  \phi_{\varepsilon}-\bar{\phi}_{\varepsilon
}\right)  ^{2}\right)  d\sigma\\
& \geq\left\vert \partial\Omega
\backslash\partial B_{R\varepsilon}^{+}\right\vert +\frac{12\pi^{2}}{C^{2}%
}\int_{\partial\Omega\backslash\partial B_{2R\varepsilon}^{+}}\left(
G-C\bar{\phi}_{\varepsilon}\right)  ^{2}d\sigma.%
\end{align*}
Therefore,
\begin{align*}
& \int_{\partial\Omega}\exp\left(  12\pi^{2}\left(  \phi_{\varepsilon}%
-\bar{\phi}_{\varepsilon}\right)  ^{2}\right)  d\sigma\\
& \geq\left\vert \partial\Omega\right\vert  -O\left(  \left(  R\varepsilon\right)  ^{3}\right) +\frac{12\pi^{2}}{C^{2}}%
\int_{\partial\Omega\backslash\partial B_{2R\varepsilon}^{+}}\left(
G-C\left(  O\left(  \left(  R\varepsilon\right)  ^{4}\log R\right)  +O\left(
\left(  R\varepsilon\right)  ^{4}\log\varepsilon\right)  \right)  \right)
^{2}d\sigma  \\
& +2\pi^{2}\exp\left(  -\frac{3}{4}+12\pi^{2}A_{p}\right)  +O\left(  \left(
R\varepsilon\right)  ^{4}\log R\right)  +O\left(  \left(  R\varepsilon\right)
^{4}\log\varepsilon\right)  +O\left(  \frac{\log R}{R}\right)+O(R\varepsilon)  \\
& =\left\vert \partial\Omega\right\vert +2\pi^{2}\exp\left(  -\frac{3}%
{4}+12\pi^{2}A_{p}\right)  +\frac{12\pi^{2}}{C^{2}}\int_{\partial\Omega}%
G^{2}+O\left(  \left(  R\varepsilon\right)  ^{4}\log R\right)  \\
& +O\left(  \left(  R\varepsilon\right)  ^{4}\log\varepsilon\right) +O(R\varepsilon) +O\left(
\frac{\log R}{R}\right).
\end{align*}

Let $R=\log^2\varepsilon$. Then we have $R\longrightarrow\infty$ and $R\varepsilon
\longrightarrow0$, and
\[
\left(  R\varepsilon\right)  ^{4}\log R+\left(  R\varepsilon\right)  ^{4}%
\log\frac{1}{\varepsilon}+O\left(  \frac{\log R}{R}\right) +O(R\varepsilon) =o\left(  \frac
{1}{C^{2}}\right)  .
\]
Hence%

\[
\int_{\partial\Omega}\exp\left(  12\pi^{2}\left(  \phi_{\varepsilon}-\bar
{\phi}_{\varepsilon}\right)  ^{2}\right)  d\sigma>\left\vert \partial
\Omega\right\vert +2\pi^{2}\exp\left(  -\frac{3}{4}+12\pi^{2}A_{p}\right),
\]
as $\varepsilon$ is small enough.

\end{document}